\newcommand{\mres}{\mathbin{\vrule height 1.6ex depth 0pt width
		0.13ex\vrule height 0.13ex depth 0pt width 1.3ex}}
\newcommand{\Acal}{\mathcal{A}}
\newcommand{\Fcal}{\mathcal{F}}
\newcommand{\Hcal}{\mathcal{H}}
\newcommand{\Kcal}{\mathcal{K}}
\newcommand{\Lcal}{\mathcal{L}}
\newcommand{\Mcal}{\mathcal{M}}
\newcommand{\Scal}{\mathcal{S}}
\newcommand{\Vcal}{\mathcal{V}}
\newcommand{\Afrak}{\mathfrak{A}}
\newcommand{\Dfrak}{\mathfrak{D}}
\newcommand{\Ffrak}{\mathfrak{F}}
\newcommand{\Gfrak}{\mathfrak{G}}
\newcommand{\Ksf}{\mathsf{K}}
\newcommand{\R}{\mathbb{R}}
\newcommand{\Sbf}{\mathbf{S}}
\newcommand{\Abb}{\mathbb{A}}
\newcommand{\Sbb}{\mathbb{S}}
\newcommand{\Tbb}{\mathbb{T}}
\newcommand{\Zbb}{\mathbb{Z}}
\DeclareMathOperator{\pv}{p.\!v.}
\DeclareMathOperator{\diverg}{div}
\DeclareMathOperator{\dist}{dist}
\DeclareMathOperator{\rank}{rank}
\newcommand{\set}[2]{\left\{\, #1 \ \textup{{:}}\ #2 \,\right\}}
\newcommand{\dpr}[1]{\langle #1 \rangle}
\newcommand{\dd}{\;\mathrm{d}}
\newcommand{\loc}{\mathrm{loc}}
\newcommand{\toweakstar}{\overset{*}\rightharpoonup}
\newcommand{\embed}{\hookrightarrow}
\newcommand{\cembed}{\overset{c}{\embed}}
\newcommand{\sbullet}{\begin{picture}(1,1)(-0.5,-2)\circle*{2}\end{picture}}
\newcommand{\frarg}{\,\sbullet\,}
\newcommand{\eps}{\varepsilon}
\newtheoremstyle{thmlemcorr}{15pt}{15pt}{\itshape}{15pt}{\bfseries}{.}{5pt}{{\thmname{#1}\thmnumber{ #2}\thmnote{ (#3)}}}
\newtheoremstyle{thmlemcorr*}{15pt}{15pt}{\itshape}{15pt}{\bfseries}{.}\newline{{\thmname{#1}\thmnumber{ #2}\thmnote{ (#3)}}}
\newtheoremstyle{defi}{15pt}{15pt}{}{15pt}{\bfseries}{.}{5pt}{{\thmname{#1}\thmnumber{ #2}\thmnote{ (#3)}}}
\newtheoremstyle{remexample}{15pt}{15pt}{}{15pt}{\itshape}{.}{5pt}{{\thmname{#1}\thmnumber{ #2}\thmnote{ (#3)}}}
\newtheoremstyle{ass}{10pt}{10pt}{}{}{\bfseries}{.}{10pt}{{\thmname{#1}\thmnumber{ }\thmnote{ (#3)}}}
\theoremstyle{defi}
\newtheorem{definition}{Definition}[section]
\theoremstyle{thmlemcorr}
\newtheorem{theorem}{Theorem}
\newtheorem{lemma}[definition]{Lemma}
\newtheorem{corollary}[definition]{Corollary}
\newtheorem{proposition}[definition]{Proposition}
\newtheorem{conjecture}{Conjecture}
\newtheorem{thmx}{Theorem}
\theoremstyle{thmlemcorr*}
\newtheorem{theorem*}[section]{Theorem}
\newtheorem{lemma*}{Lemma}
\newtheorem{corollary*}{Corollary}
\newtheorem{proposition*}{Proposition}
\newtheorem{problem*}{Problem}
\newtheorem{conjecture*}{Conjecture}
\theoremstyle{remexample}
\newtheorem{example}[theorem]{Example}
\newtheorem{remark}{Remark}
\theoremstyle{ass}
\date{}
\title[Localization of nonlocal linear PDEs]{Functional and variational \\ aspects of nonlocal operators \\ associated with linear PDEs}
\author{
	Adolfo Arroyo-Rabasa}
	\address{Adolfo Arroyo-Rabasa, 
		Institut f\"ur Angewandte Mathematik, Universit\"at Bonn, 53115 Bonn, Germany.}
	\email{rabasa@iam.uni-bonn.de}
\date{}
\begin{document}
	\maketitle

	\begin{abstract}
		We introduce a general difference quotient representation for non-local operators associated with a first-order linear operator. We establish new local to non-local estimates and strong localization principles in various spaces of functions, measures and distributions, which fully generalize those known for gradients. Under suitable assumptions, we also establish the invariance of quasiconvexity within the proposed local-nonlocal setting. Applications to the fine properties of $\mathcal A$-gradient measures are further discussed.\\

		\vspace{4pt}
		
		\noindent{Keywords:} linear differential operator, non-local operator, localization, quasiconvex.
		\vspace{5pt}
	\end{abstract}
\setcounter{tocdepth}{1}
		\tableofcontents

    \maketitle

\section{Introduction}We consider a general constant coefficient first-order homogeneous differential operator $\Acal : \mathscr D'(\R^n;V) \to \mathscr D'(\R^n;W)$, where $V,W$ are finite-dimensional inner product spaces (identified with their own duals) and $\mathscr D' = (C^\infty_c)'$ denotes the space of distributions.  Such operators can be written in the form
\begin{equation}\label{eq:A}
	\Acal = \sum_{i = 1}^n A_i \partial_i\;, \qquad A_i \in \mathrm{Hom}(V,W)\,.
\end{equation}
We recall that, in this form and up to a complex constant, the principal symbol associated with $\Acal$ is given by the $\mathrm{Hom}(V,W)$-valued linear polynomial 
\[
	\Abb(\xi) \coloneqq \sum_{i = 1}^n A_i \xi_i\,,\qquad \xi = (\xi_1,\dots,\xi_n) \in \R^n.
\] 
Given a radially symmetric weight $\rho : \R^n \to [0,\infty]$, we
introduce an associated non-local gradient  $\Afrak_\rho$, which, on sufficiently regular maps $u : \R^n \to V$ is given by the principal value integral  
\[
	\Afrak_\rho u(x) \coloneqq \pv \left( n \int \Omega_\Abb(\xi)\left[\frac{u(x + \xi) - u(x)}{|\xi|}\right]\, \rho(\xi) \, d\xi\, \right)\,.
\]
Here, 
\[
	\Omega_\Abb(\xi) \coloneqq \frac{\Abb(\xi)}{|\xi|}
\]
is the zero-homogeneous profile of the principal symbol. 

From a wide perspective, the motivation for the present work is to study function spaces associated with general linear differential operators. 
This area of research started its development with the introduction of the \emph{compensated compactness theory} (e.g.,~ \cite{me,me2,advances,Davoli,FM99}), which served as a tool to study numerous models in continuum mechanics and materials science. Since then, there has been a profound interest in understanding general differential constraints as in~\eqref{eq:A}.
It is also worth mentioning that the $\Acal$-framework has served as a bedrock for profound developments in \emph{PDE theory and geometric measure theory}~\cite{gafa, de2016structure,FG, VS_13}. 
Departing from this premise, the primary goal of this paper is to introduce a transparent methodology that allows us to generalize the well-established theory for non-local gradients to the $\Acal$-gradient framework.

\subsection*{Previous work}
It is well-known that a map $u \in L^1(\R^n)$ belongs to the Sobolev space $u \in W^{1,p}(\R^n)$ if and only if 
\begin{equation}\label{eq:pointwiseD}
	\liminf_{h \to 0^+} \|\delta_{h\omega} u \|_{L^p} < \infty \quad \text{for all directions $\omega \in \Sbb^{n-1}$},
\end{equation}
where
\[
	\delta_{h\xi} u(x)\coloneqq \frac{u(x + h\xi) - u(x)}{h}
\] 
is the $h$-scale point-wise difference quotient of $u$ on the direction $\xi$. If~\eqref{eq:pointwiseD} holds, then the full limit exists and 
\[
	\delta_{h\xi} u \longrightarrow Du\cdot\xi
\]
strongly in $L^p$ as $h \to^+ 0$\,.
In this context, {Bourgain, Brezis and Mironescu}~\cite{BBM_01_Another_look} established the following difference quotient representation of Sobolev spaces: If $u \in W^{1,p}(\R^n)$, then 
\[
	\lim_{\eps \to 0^+} \iint \frac{|u(x+h) - u(x)|^p}{|h|^p} \, \rho_\eps(h) \dd h \dd x= K_{p,n} \int |\nabla u|^p,
\]
where $K_{p,n}$ is a constant depending solely on $p$ and the spatial dimension $n$. Here,
$\{\rho_\eps\}_{\eps > 0}\subset L^1(\R^n)$ is a family of probability non-negative radial functions, that is,   
\begin{equation}\label{eq:eps1}
\|\rho_\eps\|_{L^1(\R^n)}  = 1.
\end{equation}
Further, it is assumed that this family approximates the Dirac mass at zero in the sense that
\begin{equation}\label{eq:eps}
\lim_{\eps \to 0^+} \|\rho_\eps\|_{L^1(\R^n \setminus B_\delta)} 
= 0 \qquad \text{for all} \; \delta > 0.
\end{equation}
This statement has a converse, which provides a \emph{difference quotient criterion} for Sobolev maps in the range $1 < p < \infty$. The case for $p=1$ is special in that concentrations can occur. In this vein, {D\'avila}~\cite{Davila_02} showed that a related criterion holds for $BV(\R^n)$ when $p = 1$, by replacing $\int |\nabla u|$ with the total variation $|Du|(\R^n)$. 
Similar difference quotient representations have been obtained for a norm of the symmetric-gradient operator\footnote{The case $1 < p < \infty$ was established by {Mengesha}~\cite{MengeshaBD} on $(W^{1,p})^n$, and the case $p=1$ by {Arroyo-Rabasa and Bonicatto}~\cite{BD} on $BD$.}\textsuperscript{,}\footnote{Presently, a full characterization of operators satisfying a difference quotient \emph{Bourgain--Brezis--Mironsecu (BBM) criterion} is being developed by Arroyo-Rabasa, Buccheri and Van Schaftingen~\cite{bbm}.}
\[
	Eu = \frac 12(Du + Du^t)\,, \qquad u : \R^n \to \R^n\,.
\]
Notice that, in general, these types of BBM criteria only convey the approximation of a gradient norm, but themselves \emph{do not define a non-local operator}.

Motivated by the ``energy criterion'' and by the results contained in~\cite{Du_13} (where the first rigorous non-local vector calculus was developed for the gradient, curl, and divergence operators in Hilbert spaces), {Mengesha and Spector}~\cite{MS} (see also~\cite{Ponce2}) proved that the non-local gradient 
\begin{align*}
	\Dfrak_{\rho_\eps} u & \coloneqq n \left(\pv \int  \delta_{\xi} u \, \frac{\xi}{|\xi|} \, \rho_\eps(\xi)  \dd \xi\right) 
\end{align*}
``localizes'' towards the classical gradient $D$ in a related fashion to the BBM characterization of Sobolev space above (except that it allows for cancellations inside the integral).\footnote{Mengesha and Spector denote the non-local gradient operator by $\Gfrak$.} More precisely,  they showed that if $u$ belongs to a certain function space $(X,\tau)$, then 
\begin{equation}\label{D}
	\Dfrak_{\rho_\eps} u \stackrel{\tau}\longrightarrow Du \qquad \text{as $\eps \to 0$\,.}
\end{equation}
This framework was introduced as a mathematical tool to develop peridynamics models of continuum mechanics, over nonlocal integral operators describing physically relevant laws and quantities (see \cite{Mengesha_2,DuTian,Duse,Elia,Elia2,Friedrich,kruzik1,DM_2,DM_16,MS} and references therein).
These localization results for the non-local gradient were established for various relevant topologies, including the Sobolev space $X = W^{1,p}$, and the space of functions with bounded variation $X = {BV}$ endowed with the topology associated with the \emph{area-convergence} of measures.  

Later on, Du and Mengesha (see \cite{DM_16}) used classical Sobolev-type slicing arguments to observe the validity  of \emph{soft estimates}
\begin{equation}\label{eq:DM1}
	\int \left| \int \dpr{\mathsf K(\xi),u(x), \frac{\xi}{|\xi|}, \nabla \psi(x)} \, \rho(\xi) \, d \xi \right|^p \, dx \lesssim \|\rho\|_{L^1}\|Du\|_{L^p}^p\,,
\end{equation}
where $\Ksf$ is a zero-homogeneous third-order tensor and (with the double index summation convention) 
\[
	\dpr{\Ksf,u,v,w} \coloneqq \Ksf_{ijk}u_iv_jw_k\,.
\]
There, the authors also established
 \emph{weak-convergence} 
 localization principles for distributional {integro-differential operators} of the form
\begin{equation}\label{eq:DM}
	\dpr{D^{\mathsf K}u,\psi} = \int \int \dpr{\mathsf K(\xi),u(x), \frac{\xi}{|\xi|}, \nabla \psi(x)}  \,\rho(\xi) \,d \xi \, dx\,.
\end{equation}
A crucial observation from the theory of singular integrals, however, is that the bound $\|Du\|_{L^p}$ on the right-hand side of~\eqref{eq:DM1} is \emph{far too restrictive} when $\Ksf$ is not injective.\footnote{The situation worsens for $p=1$, where the requirement $Du \in L^1$ is too restrictive, even for elliptic operators (cf. \cite{KK_16,Ornstein_62}).} Instead, one would expect full control of the left-hand side in terms of  $\|D^\Ksf u\|_{L^p}$. The difficulty of this task is to circumvent the use of slicing arguments, which are unavailable for general operators (see~\cite{arroyo2020slicing}). From a technical standpoint, establishing the sharp control by $\|D^\Ksf u\|_{L^p}$ is one of the questions that motivated this work (see point (ii) in the next section).

\subsection*{Summary of the main results}The following is a brief summary of the main results established in this work.
\begin{enumerate}[(i)]\setlength{\itemsep}{5pt}
\item \textbf{Kernel representation.} In Definitions~\ref{def:spherical} and~\ref{def:radial} we introduce the \emph{spherical nonlocal operator} $\mathscr A_s$ (where $s$ denotes the radius) and the \emph{radial nonlocal operators} $\Afrak_\rho$ (where $\rho$ denotes a radial weight function) in terms of the difference quotient 
\[
	\Abb\left(\frac\xi{|\xi|}\right)\left[\frac{u(x + \xi) - u(x)}{|\xi|}\right]\,.
\] 
In the language of Du and Mengesha, we show that $\Abb$ is the ``canonical'' (linear) third-order tensor associated with $\Acal$ (compare this with Eqn.~\ref{eq:DM}). The main advantage of this perspective is that the operator defines the kernel and not vice versa. Therefore, avoiding a case-by-case analysis.

\item \textbf{Quantitative estimates.} Another novelty is that we are able to define the nonlocal operators on admissible distributions $u \in \mathscr D'$ (this includes $u \in L^p, \Mcal_b$ or even $u \in L_\loc^1,\mathscr D'$ under similar boundedness assumptions for $\Acal u$).\footnote{Here, $\Mcal_b$ denotes the space of finite  Borel measures.} To achieve this, we establish a new \emph{quantitative} control of the nonlocal operators in terms of the local operator. Namely, we show that 
\begin{align*}
	\|\mathscr A_s u\|_p & \le \|\Acal u\|_{p}\,, \\
	 \|\Afrak_\rho u\|_{p} & \le \|\rho\|_{L^1} \|\Acal u\|_{p}
\end{align*}
for an extended $L^p$-norm $\|\frarg\|_p$ on distributions. A new insight that stems from our techniques are the \emph{improved} estimates
\begin{align*}
	\|\mathscr A_s u\|_{L^1} &\le \|\Acal u\|_{\Mcal_b}\,, \\
 	\|\Afrak_\rho u\|_{L^1} &\le \|\rho\|_{L^1} \|\Acal u\|_{\Mcal_b}\,.
\end{align*}
These results, contained in Theorems~\ref{thm:spherical_localization} and~\ref{thm:Lp_bounds}, substantially improve upon the results contained in~\cite[Proposition 2.1]{DM_16}.

\item \textbf{Functional equivalence.} We establish sufficient and necessary conditions (see Propositions~\ref{prop:spherical_kernel} and~\ref{prop:kernel}) for the null spaces of $\Acal$ and $\Afrak_\rho$ to coincide among a range of $L^p$ spaces, that is, we discuss conditions on $s$ and $\rho$ under which 
\begin{align*}
	\ker_{L^p} \mathfrak A_\rho = \ker_{L^p} \ker \mathscr A_s \quad \text{and} \quad \ker_{L^p} \mathfrak A_\rho =  \ker_{L^p} \Acal\,.
\end{align*}

\item \textbf{Localization of the nonlocal operators.} We prove that $\Acal$ can be recovered from the non-local operators by means of a small-scale localization principle analogue to~\eqref{D}.  
More precisely, we show that if $u$ is a distribution with $\Acal u \in X = \{\mathscr D,\mathscr D',L^p,\Mcal_\text{area}\}$, then
\begin{align*}
	\mathscr A_s u  & \stackrel{X}\longrightarrow \Acal u  \qquad &\text{as $s \to 0$}\\
	 \Afrak_{\rho_\eps} u & \stackrel{X}\longrightarrow \Acal u \qquad &\text{as $\eps \to 0$}
\end{align*}
Here, $1 \le p < \infty$ and $\Mcal_\text{area}$ is the space of finite Borel measures endowed with the topology generated by the area-convergence of measures. 
In this regard, our localizations substantially refine the \emph{weak convergence} localizations established in~\cite[Theorem 3.12]{DM_16}, and generalize all the results established for gradients in~\cite{MS}. 
 These results are contained in Theorems~\ref{thm:spherical_localization},~\ref{thm:Lp} and~\ref{thm:M}.

 \item \textbf{Variational equivalence.} Motivated by the recent work of Cueto, Kreisbeck and Sch\"onberger~\cite[Remark~4.6]{Kreis2} (see also~\cite{K1}), we   observe that (see Propositions~\ref{thm:potential} and~\ref{thm:potential2}) the functional definitions of \emph{quasiconvexity} associated with $\mathscr A_s, \Afrak_\rho$, and $\Acal$ are identical.
 
 \item \textbf{Applications to fine properties of functions.} We give a general version of the Gauss-Green theorem, which, for continuous maps, conveys that $\Acal u \in \Mcal$ vanishes on $\Hcal^{n-1}$ sigma-finite sets. More precisely, 
 \[
 	u \in C(\R^n) \quad \text{and} \quad \Hcal^{n-1}(U) < \infty \quad \Longrightarrow \quad |\Acal u|(U) = 0. 
 \]
In Corollary~\ref{cor:fine} we generalize this to all \emph{Lebesgue continuous} functions. In fact, we show something slightly stronger (cf. Eqn.~\ref{eq:dos} and Theorem~\ref{thm:fine}). 
\end{enumerate}~\vskip1pt

\subsection*{Further comments} 
Our methods \emph{differ significantly} from the ones developed in previous works, as they are based on a generalization of the Gauss--Green theorem (see Section~\ref{a:gg}). An advantage of this is that our proofs can be easily adapted to find localizations for kernels associated with other shapes (such as cubes, regular polygons, or rings) upon a simple modification of the kernels. This even applies to non-symmetric profiles (although, in this case, the localization would yield approximations for certain non-isotropic norms). 

Of course, all of our results also apply to radial weights of the form
\[
	\rho = w I_{1 - s}\,, \qquad s \in (0,1)\,,
\]
where $I_\alpha$ is the $\alpha$-Riesz potential and $w$ is a radial cut-off function. Thus allowing one to work with (cf.~\cite[Section 2.5]{Kreis2}) non-local variants of an \emph{$s$-fractional} $\Acal$-gradient.

\section{Main definitions and results}\label{sec:2}

As usual, we write $\mathscr D(\R^n;V) = C^\infty_c(\R^n;V)$ to denote the space of smooth $V$-valued functions with compact support on $\R^n$. Accordingly, we write $\mathscr D'(\R^n;V)$ to denote its continuous dual, the space of $V$-valued distributions. In all that follows $\Mcal$ denotes a space of locally bounded Borel measures and $\Mcal_b$ its subspace of finite measures. Our main object of study will be the following nonlocal spherical and radial operators. We first define these for smooth and compactly supported maps:

\begin{definition}[Nonlocal spherical operator]\label{def:spherical}For a given test function map $u\in \mathscr D(\R^n;V)$ and $s >0$, we define the \emph{$s$-spherical non-local operator} 
\begin{align*}
	\mathscr A_s u(x) & \coloneqq n \fint_{\partial B_s} \Omega_\Abb(\omega) [\delta_\omega u(x)] \, dS(\omega) \\[1em]
	& \phantom{:}= n \fint_{\partial B_s} \Abb\left(\frac{\omega}{s}\right) \left[\frac{u(x+ \omega) - u(x)}{s}\right] \, dS(\omega), \qquad x \in \R^n.
\end{align*}
Here, $S$ denotes the uniform $(n-1)$-dimensional Hausdorff measure on $\partial B_s$ and $\Omega_\Abb$ is the zero-homogeneous profile of $\Abb$.
\end{definition}

Hereinafter $\rho : \R^n \to [0,\infty]$ will denote an integrable radial Borel function with profile $\hat \rho : \R \to [0,\infty]$. We define a nonlocal operator associated with $\Acal$ through the radial map $\rho$ as follows:

\begin{definition}[Nonlocal radial operator]\label{def:radial}
 For a given test vector field $u \in\mathscr D(\R^n;V)$, we define the \emph{nonlocal operator}
	\begin{align*}\label{eq:rho}
		\Afrak_\rho & u(x) \coloneqq \lim_{\delta \to 0^+} \left( n  \int_{\R^n \setminus B_\delta}  \Omega_\Abb\left(\frac{\xi}{|\xi|}\right)[\delta_{\xi} u(x)] \, \rho(\xi)\, d\xi \right)\\[1em]
	& \phantom{:}= \lim_{\delta \to 0^+} \, \left( n  \int_{\R^n \setminus B_\delta}  \frac{\Abb(h)[u(x +\xi) - u(x)]}{|\xi|^2} \, \, \rho(\xi)\, d\xi \right), \quad x \in \R^n.
	\end{align*}
\end{definition}
It is straightforward to verify that both $\mathscr A_s$ and $\Afrak_\rho$ are bounded integral  operators from $\mathscr D(\R^n;V)$ into $\mathscr D(\R^n;W)$ ---this will be formally discussed in {Sections}~\ref{sec:spherical} and~\ref{sec:radial}--- and  are also stable under integration by parts: 
\begin{proposition}[Integration by parts]
Let $u \in \mathscr D(\R^n;V)$ and $\varphi \in \mathscr D(\R^n,W)$ be test functions. Then
\[
	\int \mathscr A_s u \cdot \varphi = \int u \cdot \mathscr A_s^* \varphi
\]
and
 \[
 	\int \Afrak_\rho u \cdot \varphi = \int u \cdot \Afrak^*_\rho \varphi
 \] 
 Here $\mathscr A_s^*$ and
  $\Afrak^*_\rho$ are the respective spherical and radial operators associated with $\Acal^* = - \sum_{i = 1}^n A^t_i \partial_i$, the formal $L^2$-adjoint of $\Acal$. 
\end{proposition}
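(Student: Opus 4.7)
The plan is to prove the spherical identity first, and then explain how the same symmetrization argument works for the radial operator modulo a short principal-value justification. Throughout, the only algebraic input is the fact that the transpose of the symbol satisfies $\Omega_{\Abb}(\omega)^t = -\Omega_{\Abb^*}(\omega)$, where $\Abb^*(\xi) = -\Abb(\xi)^t$ is the principal symbol of $\Acal^*$, together with the oddness $\Omega_{\Abb}(-\omega) = -\Omega_{\Abb}(\omega)$. Since $u,\varphi$ are smooth and compactly supported, every integral below is absolutely convergent, so Fubini and Lebesgue's change of variables are freely available.

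For the spherical operator, I would unfold the definition and swap the order of integration,
\[
	\int \mathscr{A}_s u(x)\cdot \varphi(x)\,dx = \frac{n}{s}\fint_{\partial B_s}\int \Omega_{\Abb}(\omega)\bigl[u(x+\omega)-u(x)\bigr]\cdot \varphi(x)\,dx\,dS(\omega).
\]
Inside the inner integral I would translate $x\mapsto x-\omega$ in the $u(x+\omega)$ term and move $\Omega_{\Abb}(\omega)$ onto $\varphi$ via its transpose, producing
\[
	\int \Omega_{\Abb}(\omega)\bigl[u(x+\omega)-u(x)\bigr]\cdot \varphi(x)\,dx = \int u(y)\cdot \Omega_{\Abb}(\omega)^t\bigl[\varphi(y-\omega)-\varphi(y)\bigr]\,dy.
\]
Finally, the antipodal change of variables $\omega\mapsto -\omega$ on $\partial B_s$ (which leaves the uniform measure $S$ invariant) converts $\varphi(y-\omega)-\varphi(y)$ into $\varphi(y+\omega)-\varphi(y)$ and produces an overall sign $-\Omega_{\Abb}(\omega)^t = \Omega_{\Abb^*}(\omega)$. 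Recognizing the right-hand side as $\int u\cdot \mathscr{A}_s^*\varphi$ closes the argument.

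For the radial operator, I would run the same chain of manipulations on the truncated integrand $\Omega_{\Abb}(\xi/|\xi|)[\delta_\xi u(x)]\rho(\xi)\chi_{\{|\xi|>\delta\}}$, since, at fixed $\delta>0$, absolute integrability against $\rho\in L^1$ is trivial (the difference quotient $\delta_\xi u$ is globally bounded in terms of $\|\nabla u\|_\infty$ and the supports are compact, so Fubini applies without issue). The same translation $x\mapsto x-\xi$ and the symmetry $\xi\mapsto -\xi$ (which preserves the radial weight $\rho$ and the annular truncation $\R^n\setminus B_\delta$) yield
\[
	\int \Afrak_\rho u\cdot \varphi = \lim_{\delta\to 0^+}\left( n\int_{\R^n\setminus B_\delta}\!\!\int u(y)\cdot \Omega_{\Abb^*}(\xi/|\xi|)[\delta_\xi \varphi(y)]\,\rho(\xi)\,dy\,d\xi\right),
\]
which is the defining expression for $\int u\cdot \Afrak_\rho^*\varphi$.

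The main technical point to attend to is the passage to the limit in $\delta$ for the radial identity: one must verify that $\Afrak_\rho\varphi$ is well defined as an absolutely convergent integral once $\varphi$ is smooth, so that the truncation limit on the right-hand side is merely a removable precaution matching the one in Definition~\ref{def:radial}. This reduces to the observation that $|\delta_\xi \varphi(y)|\le \|\nabla \varphi\|_\infty$ uniformly in $\xi$, which, combined with $\rho\in L^1$, gives dominated convergence as $\delta\to 0^+$. Once this is in place, nothing else is needed: the argument is entirely symbolic and relies only on the translation invariance of Lebesgue measure, the evenness of the sphere (respectively the radial weight), and the identification $\Omega_{\Abb}^t = -\Omega_{\Abb^*}$ coming from $\Acal^*=-\sum A_i^t\partial_i$.
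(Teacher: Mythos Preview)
Your argument is correct. The paper packages the same computation slightly differently: it first uses the cancellation property $\int_{S^{n-1}}\Abb(\omega)\,dS(\omega)=0$ to rewrite $\mathscr A_s u$ as a genuine convolution $K_s\star u$ with the measure kernel $K_s=\Omega_\Abb\,m_s$, and then reads off the identity from the standard convolution adjoint $\int(K_s\star u)\cdot\varphi=\int u\cdot(\tilde K_s^t\star\varphi)$, observing that $\tilde K_s^t$ is precisely the kernel associated with $\mathscr A_s^*$. Your route works directly from the difference-quotient definition and does not pass through the convolution representation; the translation $x\mapsto x-\omega$ together with the antipodal reflection $\omega\mapsto-\omega$ is exactly what is hidden inside the reflected kernel $\tilde K_s$ in the paper's one-line argument. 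Both approaches are driven by the same symmetries (translation invariance of Lebesgue measure, evenness of the spherical and radial weights, the oddness $\Omega_\Abb(-\omega)=-\Omega_\Abb(\omega)$, and the transpose relation $\Omega_\Abb^t=-\Omega_{\Abb^*}$); the paper's version is terser once the kernel is in hand, while yours is self-contained and makes the role of each symmetry explicit. For the radial identity the paper does not spell out a separate argument, so your dominated-convergence justification of the $\delta\to 0$ limit is a useful addition.
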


Duality allows us to \emph{uniquely and continuously} extend  $\mathscr A_s$ to all $V$-valued distributions:

\begin{definition}[On distributions] Let $T \in \mathscr D'(\R^n;V)$. The distributional spherical $\Acal$-gradient is defined as the distribution acting on test functions 
\[
	\mathscr A_s T(\psi) \coloneqq T(\mathscr A_s^* \psi)\,, \qquad \psi \in \mathscr D(\R^n;W)\,.
\]
\end{definition}
We shall see (Section~\ref{sec:spherical}) that the right-hand side above is well defined because $\mathscr A_s  \varphi$ is given by a convolution with a compactly supported measure. In general, it is not possible to directly extend $\Afrak_\rho$ to all distributions by duality. This occurs because $\Afrak_\rho  \varphi$ may not be compactly supported, even if $\varphi$ is. However, 
an extension exists for a class of distributions we call admissible:
\begin{definition}[Admissible distributions]\label{def:admissible} We say that a distribution $T \in \mathscr D'(\R^n;V)$ is \emph{admissible} if either
\begin{enumerate}\setlength{\itemsep}{0.2cm}
\item[(a')] $\rho$ is compactly supported (with no further requirement on $T$)
\end{enumerate} 
or $T$ satisfies one of the following conditions:
\begin{enumerate}\setlength{\itemsep}{0.2cm}
\item[(a)] The support of  $T$ is compact,
\item[(b)] $T \in L^p(\R^n;V)$  for some $1 \le p \le \infty$ or $T \in \Mcal_b(\R^n;V)$,
\item[(c)] $\Acal T \in L^p(\R^n;W)$  for some $1 \le p \le \infty$ or $\Acal T \in \Mcal_b(\R^n;W)$.
\end{enumerate}
If either (a), (b) or (c) hold, we say that $T$ is \emph{strongly admissible}.
\end{definition}

The fact that we can extend the radial nonlocal operator by duality to the class of admissible distributions is recorded in the following result (cf. Definition~\ref{def:radial_distributions} and~Lemma~\ref{lem:definition_radial}):

\begin{lemma}[The radial operator for distributions] The operator $\Afrak_\rho$
has a unique and continuous distributional extension (denoted the same way) over the class of admissible distributions: If $T$ is an admissible distribution, then 
\[
	\Afrak_\rho T(\psi) = T(\Afrak_\rho^* \psi) \quad \text{for all $\psi \in \mathscr D'(\R^n;W)$.}
\]
Moreover, the radial operator can be written as a distributional superposition of spherical ones as
\[
	\Afrak_\rho T = \int_0^\infty \omega_n \hat \rho(r) r^{n-1} \mathscr A_r T\, dr\,, \qquad \omega_n \coloneqq |B_1|\,,
\]
Here, the integral makes sense as a Bochner integral over a Banach space of distributions.
\end{lemma}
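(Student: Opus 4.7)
The approach is to define the extension by duality,
\[
    \Afrak_\rho T(\psi) \coloneqq T(\Afrak_\rho^* \psi), \qquad \psi \in \mathscr D(\R^n;W),
\]
and to check, case by case, that $\Afrak_\rho^*\psi$ lands in the correct predual of the space containing $T$. Uniqueness and continuity will follow from the density of $\mathscr D$ in each admissibility class combined with the boundedness of $\psi \mapsto \Afrak_\rho^*\psi$ established on the adjoint side.

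I would treat admissibility case by case. In (a'), when $\supp \rho$ is compact, the defining integral shows directly that $\Afrak_\rho^*\psi$ is a test function, so the pairing is meaningful for every $T\in\mathscr D'$. In (a), smoothness of $\Afrak_\rho^*\psi$ already suffices since $T$ has compact support. For (b) and (c), I would apply the quantitative bounds of Theorem~\ref{thm:Lp_bounds} to the adjoint operator $\Afrak_\rho^*$ (which is of the same form with $\Acal^*$ in place of $\Acal$), obtaining
\[
    \|\Afrak_\rho^*\psi\|_{L^{p'}} \le \|\rho\|_{L^1}\|\Acal^*\psi\|_{L^{p'}},\qquad \|\Afrak_\rho^*\psi\|_{L^\infty} \le \|\rho\|_{L^1}\|\Acal^*\psi\|_{L^\infty}.
\]
Together with a decay/continuity check via dominated convergence, these place $\Afrak_\rho^*\psi$ in $L^{p'}$ or $C_0$, the correct predual for $T \in L^p$ or $T \in \Mcal_b$ respectively. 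In (c), where one does not control $T$ itself, I would instead reinterpret $\Afrak_\rho^*\psi$ as $\Acal^*(G_\rho * \psi)$ for a radial kernel $G_\rho \in L^1$ canonically associated with $\rho$ (this is exactly what the Bochner formula below makes precise), so that $T(\Afrak_\rho^*\psi) = (\Acal T)(G_\rho * \psi)$ is controlled by $\|\Acal T\|_X \|G_\rho*\psi\|_{X^*}$.

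For the Bochner representation I would first verify it pointwise for $u \in \mathscr D$ using polar coordinates $\xi = r\sigma$ with $d\xi = r^{n-1}\,dr\,dS(\sigma)$:
\[
    \Afrak_\rho u(x) = \int_0^\infty \hat\rho(r)\,r^{n-1}\left( n\int_{\partial B_1}\Omega_\Abb(\sigma)\frac{u(x+r\sigma)-u(x)}{r}\,dS(\sigma) \right)dr,
\]
where the bracketed integral is a fixed multiple of $\mathscr A_r u(x)$, matching the claimed identity up to the normalising factor $|\partial B_1| = n\omega_n$. Bochner integrability of $r \mapsto \omega_n \hat\rho(r)r^{n-1}\mathscr A_r T$ over $(0,\infty)$ in the relevant Banach space of distributions is justified by the uniform-in-$r$ bound $\|\mathscr A_r T\|_X \le \|\Acal T\|_X$ from Theorem~\ref{thm:spherical_localization} together with the finiteness of $\int_0^\infty r^{n-1}\hat\rho(r)\,dr = \|\rho\|_{L^1}/(n\omega_n)$. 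Density of $\mathscr D$ in each admissibility class, combined with the duality definition above and Fubini, then propagates the identity from test functions to every admissible $T$.

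The principal obstacle is case (c): without an $L^p$-bound on $T$ itself, the naive pairing $T(\Afrak_\rho^*\psi)$ cannot be controlled through $T$ alone, so one is forced to trade the radial operator through $\Acal^*$ and reveal a clean factorisation $\Afrak_\rho^* = \Acal^* \circ (G_\rho * \sbullet)$. The Bochner superposition, together with the Gauss--Green identity underpinning $\mathscr A_s$ from Section~\ref{a:gg}, is what ultimately supplies this factorisation; I expect the bulk of the work to be in showing that the radial superposition of the spherical averaging kernels behind $\mathscr A_r$ indeed assembles into a single $L^1$ kernel $G_\rho$ with norm controlled by $\|\rho\|_{L^1}$.
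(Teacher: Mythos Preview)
Your approach is correct and mirrors the paper's proof closely: define the extension by the duality pairing $T(\Afrak_\rho^*\psi)$, verify case by case that this is a well-defined continuous functional, and identify it with the Bochner integral of spherical operators using the uniform bound $\|\mathscr A_r T\|_p \le \|\Acal T\|_p$ from Theorem~\ref{thm:spherical_localization}. The kernel you call $G_\rho$ is exactly the paper's $\mu_\rho$ (see Eqn.~\eqref{eq:multiplier}), and the factorisation $\Afrak_\rho^*\psi = \Acal^*(\mu_\rho \star \psi)$ is precisely what the paper uses to make sense of the pairing in case~(c).

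One organisational caution: you invoke Theorem~\ref{thm:Lp_bounds} to bound $\|\Afrak_\rho^*\psi\|_{L^{p'}}$, but in the paper's logical order that theorem is \emph{downstream} of the present lemma (it is deduced from Corollary~\ref{cor:superposition}, which in turn rests on Lemma~\ref{lem:definition_radial}). This is not a genuine gap, because the bound you need is only for test functions $\psi$, and for those it follows immediately from the convolution identity $\Afrak_\rho^*\psi = \mu_\rho \star \Acal^*\psi$ together with Young's inequality and $\|\mu_\rho\|_{L^1} = 1$ --- exactly the $G_\rho$ computation you flag at the end of your plan. Just be sure to establish that identity (and the fact that $\mu_\rho$ is an $L^1$ probability density) \emph{before} appealing to any $L^p$ bounds, rather than citing Theorem~\ref{thm:Lp_bounds}; otherwise the argument is circular as written. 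The paper handles this by first deriving the convolution representation for test functions at the start of Section~\ref{sec:radial}, then treating the admissibility cases in Lemma~\ref{lem:definition_radial} using only that representation and the spherical bounds from Section~\ref{sec:spherical}.
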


Given that we shall often work directly with distributions, it will be useful to identify locally integrable functions and measures with distributions. We also introduce the concept of extended $L^p$-norm:

\begin{definition}[Extended $L^p$ norms] Let $1 \le p \le  \infty$ and let $q$ be the the H\"older conjugate of $p$. We define the extended $L^p$-norm of a distribution $T\in \mathscr D'(\R^n;V)$ as	\begin{align*}
		\|T\|_{p} &  \coloneqq \sup \set{ T[\varphi]\,, }{\varphi \in \mathscr D(\R^n;V), \|\varphi\|_{L^q} \le 1}  
	\end{align*}
	\end{definition}
\begin{remark}If $1 < p \le \infty$, then $\|T\|_p$ is finite if and only if $T$ can be represented by integration by some $u \in L^p$. In particular $\| \frarg\|_p$ coincides with the $L^p$-norm on $L^p$ maps. Similarly, $\|T\|_1$ is finite if and only if $T$ can be represented by integration by some $\mu \in \Mcal_b$ and $\|T\|_1 = |\mu|(\R^n)$.
\end{remark}

It is straightforward to verify that, on locally integrable functions, the radial non-local operator is expressed by a principle value integral: 

\begin{remark}[Principal value]  Let $u  \in L^1_\loc(\R^n;V)$ be an admissible distribution. Then 
\[
	\Afrak_\rho u = \pv \, \left( n  \int  \frac{\Abb(\xi)[u(x +\xi) - u(x)]}{|\xi|^2} \, \, \rho(\xi)\, d\xi \right) 
\]
in the sense of distributions. Here, $u(x)$ is the precise Lebesgue representative of $u$ at $x$. We shall see (cf. {Lemma}~\ref{lem:integral_limit}) that if moreover $\|\Acal u\|_p < \infty$ for some $1 \le p \le \infty$, then $\Afrak_\rho u$ can be defined through an integral limit as in {Definition}~\ref{def:radial}.
\end{remark}

\subsection{Nonlocal spherical operators}Our first result establishes (extended) $L^p$-norm bounds and localizations for the spherical operators in terms of the $\Acal$-gradient. 

\begin{thmx}[Upper bounds and localization]\label{thm:spherical_localization}\ \begin{enumerate}
\setlength{\itemsep}{10pt}\setlength{\parskip}{10pt}
\item If $u \in \mathscr D(\R^n;V)$, then 
\[
	\mathscr A_s u \stackrel{\mathscr D}\longrightarrow \Acal u \qquad \text{as $s \to 0^+$}.
\]
and
\[
\int g(\mathscr A_s u) \le \int g(\Acal u)
\]
for all nonnegative convex functions $g : W \to [0,\infty)$. 

\item If $T \in \mathscr D'(\R^n;V)$, then
\[
	\mathscr A_s T \stackrel{\mathscr D'}\longrightarrow \Acal T \qquad \text{as $s \to 0^+$}.
\]
Moreover,
\[
	\|\mathscr A_s T\|_p \le \|\Acal T\|_p \quad \text{for all $1 \le p \le \infty$\,.}
\]

\item If $\Acal T \in L^p(\R^n;W)$ for some $1 \le p \le \infty$, then
\[
\mathscr A_s T \in (C \cap L^p)(\R^n;W).
\]
In moreover $1 \le p < \infty$, then the distributional convergence improves to 
\[
	\mathscr A_s T \stackrel{L^p}\longrightarrow \Acal T \qquad \text{as $s \to 0^+$}.
\]
\item If $\Acal T\in \Mcal_b(\R^n;W)$, then 
\[
\mathscr A_s T \in (L^\infty_\loc \cap L^1)(\R^n;W) 
\]
and the absolutely continuous measures $\{\mathscr A_s T \, \mathscr L^n\}_s$ converge to $\Acal u$ in the area sense of measures as $s \to 0$.  That is equivalent to requiring that
\[
	\lim_{s \to 0^+} \int f(\mathscr A_s T) =  \int f(\Acal^a T) + \int f^\infty(\Acal^s T) 
\]
for all continuous integrands $f: W \to \R$ with with a well-defined recession limit function $f^\infty : W \to \R$. Here,
\[
	f^\infty(w) \coloneqq \lim_{t \to \infty} \frac {f(tw)}{t}, \qquad w \in W,
\]
is the recession function of $f$ and
\[
	\Acal T = \Acal^a T\, \mathscr L^n +  \Acal^s  T, \qquad |\Acal^s u| \perp \mathscr L^n,
\]
is the Lebesgue--Radon--Nykod\'ym decomposition of $\Acal T$.
\end{enumerate}
\end{thmx}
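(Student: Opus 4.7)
The centerpiece of the argument will be the following \emph{Gauss--Green identity}, valid for $u \in \mathscr D(\R^n;V)$:
\[
	\mathscr A_s u(x) \;=\; (\Acal u * \mu_s)(x)\,, \qquad \mu_s \coloneqq \frac{1}{|B_s|}\,\ONE_{B_s}\,\mathscr L^n\,.
\]
To derive it, I would differentiate under the integral and apply the divergence theorem to the vector field $y \mapsto \Abb(y) u(x+y)$ on $B_s$: the interior term gives $\int_{B_s} \Acal u(x+y)\, dy$, while the boundary term gives $\int_{\partial B_s} \Omega_\Abb(\omega)\, u(x+\omega)\, dS(\omega)$; odd symmetry of $\Omega_\Abb$ removes the constant contribution $u(x)$, and normalizing by $|\partial B_s|=n\omega_n s^{n-1}$ and $|B_s|=\omega_n s^n$ yields the identity. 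Taking adjoints shows $\mathscr A_s^*\psi = (\Acal^*\psi)*\mu_s$ for $\psi\in\mathscr D(\R^n;W)$, and a one-line duality calculation extends the convolution identity to all $T \in \mathscr D'(\R^n;V)$:
\[
	\mathscr A_s T \;=\; \Acal T * \mu_s \qquad \text{in }\mathscr D'(\R^n;W).
\]
Once this is in hand, all four items reduce to standard properties of convolution against the radially symmetric probability mollifier $\mu_s$.

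\textbf{Items (1) and (2).} For smooth $u$, $\mathscr A_s u \to \Acal u$ in $\mathscr D$ is the usual mollification statement. The convex inequality is immediate from Jensen applied pointwise (since $\mu_s$ is a probability measure) followed by Fubini and translation invariance:
\[
	\int g(\mathscr A_s u)\,dx \;\le\; \int\!\!\int g(\Acal u(x-y))\,d\mu_s(y)\,dx \;=\; \int g(\Acal u).
\]
For distributions, $\psi * \mu_s \to \psi$ in $\mathscr D$ yields $\mathscr A_s T(\psi) = \Acal T(\psi * \mu_s) \to \Acal T(\psi)$, and Young's inequality $\|\psi*\mu_s\|_{L^q}\le \|\psi\|_{L^q}\|\mu_s\|_{\Mcal_b} = \|\psi\|_{L^q}$ gives the extended $L^p$ estimate by testing against the unit ball in $L^q$.

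\textbf{Item (3).} When $\Acal T \in L^p$, the representation $\mathscr A_s T = \Acal T * \mu_s$ is an honest convolution of an $L^p$ function with $\mu_s \in L^\infty_c$. Continuity follows because $\mathscr A_s T(x) = \fint_{B(x,s)} \Acal T$ depends continuously on the center $x$ (dominated convergence on the indicator). Strong $L^p$ convergence for $1\le p < \infty$ is the classical theorem on approximate identities.

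\textbf{Item (4).} When $\Acal T$ is a finite measure, $\mathscr A_s T = \Acal T * \mu_s$ is a bounded $L^1$ function (Young), and the local boundedness follows from $|\mathscr A_s T(x)| \le |B_s|^{-1}|\Acal T|(\R^n)$. For area-convergence, I would invoke the Reshetnyak continuity theorem: it suffices to check (i) weak$^*$ convergence $\Acal T * \mu_s \stackrel{*}\rightharpoonup \Acal T$ and (ii) convergence of total variations $|\Acal T * \mu_s|(\R^n) \to |\Acal T|(\R^n)$. Property (i) is just $\int\varphi\,(\Acal T * \mu_s) = \Acal T(\varphi * \check\mu_s) \to \Acal T(\varphi)$ for $\varphi\in C_c$. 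For (ii) the $\limsup$ inequality is automatic from $\|\mu_s\|_{\Mcal_b}=1$ (already covered by item~(2) with $p=1$), while the $\liminf$ is the lower semicontinuity of total variation under weak$^*$ convergence.

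\textbf{Main obstacle.} The genuinely delicate part is item~(4): the jump from weak$^*$ convergence to area convergence requires the total-variation equality, which in turn relies on the sharp upper bound $\|\mathscr A_s T\|_1 \le \|\Acal T\|_{\Mcal_b}$ from item~(2). Establishing this bound cleanly for merely admissible $T$ — rather than only for $T$ with $\Acal T$ \emph{a priori} in $L^1$ — is what makes the Gauss--Green identity indispensable, since it bypasses the usual slicing arguments that fail for general $\Acal$.
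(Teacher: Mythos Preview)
Your overall strategy---the Gauss--Green identity $\mathscr A_s u = \Acal u * \mu_s$ followed by standard mollification facts---is exactly the paper's approach, and items (1)--(3) are handled correctly and in essentially the same way.

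There is, however, a genuine gap in item~(4). You claim that to obtain \emph{area} convergence it suffices to verify weak$^*$ convergence together with convergence of total variations $|\Acal T * \mu_s|(\R^n) \to |\Acal T|(\R^n)$, and then invoke Reshetnyak. But weak$^*$ convergence plus total-variation convergence is \emph{strict} convergence, which is strictly weaker than area convergence: the Reshetnyak continuity theorem under strict convergence only yields continuity for positively $1$-homogeneous integrands, not for integrands $f$ with a nontrivial recession function $f^\infty$. A simple counterexample on $[0,1]$ is $u_j(x) = 1 + \sin(2\pi j x)$, which converges strictly to the constant $1$ (both have total variation $1$), yet $\int_0^1 \sqrt{1+u_j^2}\,dx$ is a constant strictly larger than $\sqrt{2} = \int_0^1 \sqrt{1+1^2}\,dx$, so area convergence fails.

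What is actually needed is the $\limsup$ inequality for the area functional itself, and this requires one more ingredient beyond Jensen and Young: the convex sublinearity bound $g(a+b) \le g(a) + g^\infty(b)$, applied after splitting the mollified measure along the Lebesgue decomposition $\Acal T = \Acal^a T\,\mathscr L^n + \Acal^s T$. Concretely, with $g(z)=\sqrt{1+|z|^2}$ and $g^\infty(z)=|z|$,
\[
g\bigl(\Acal T * \mu_s\bigr) \;\le\; g\bigl((\Acal^a T)*\mu_s\bigr) \;+\; g^\infty\bigl((\Acal^s T)*\mu_s\bigr),
\]
and now Jensen plus Young on each piece gives $\int g(\mathscr A_s T) \le \int g(\Acal^a T) + \int g^\infty(d\Acal^s T)$, which is the required $\limsup$ bound. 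The $\liminf$ bound then follows, as you say, from lower semicontinuity (Reshetnyak lsc) under weak$^*$ convergence. This is precisely how the paper closes the argument.
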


The proof of this result is contained in {Section}~\ref{sec:spherical}. Thanks to a simple measure-theoretic lemma discussed in the Appendix (cf. Lemma~\ref{lem:Leb}), we obtain the following representation for the spherical operators (cf. Eqn.~\ref{eq:rep}).
\begin{corollary}\label{lem:GG}
	Let $T \in \mathscr D'(\R^n;V)$ and assume that $|\Acal T|$ is a locally bounded measure. Then,  
	\[
		\mathscr A_s T(x) =  \frac{\mathcal AT(B_s(x))}{\omega_n s^n}.
	\]
for almost every $x \in \R^n$.
\end{corollary}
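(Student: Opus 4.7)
The proof splits naturally into a smooth case (handled by the classical Gauss--Green formula) and an extension to distributions by mollification. For $u \in \mathscr D(\R^n;V)$, I would apply the divergence identity for $\Acal$ on the ball $B_s(x)$: since the outer unit normal at $y \in \partial B_s(x)$ is $\nu(y) = (y-x)/s$ and $\Omega_\Abb$ is $0$-homogeneous, $\Abb(\nu(y)) = \Omega_\Abb(y-x)$. The Gauss--Green formula then reads
\[
	\Acal u(B_s(x)) = \int_{\partial B_s(x)} \Abb(\nu(y))\, u(y)\, dS(y) = \int_{\partial B_s} \Omega_\Abb(\omega)\, u(x+\omega)\, dS(\omega),
\]
after the change of variables $\omega = y - x$. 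Odd symmetry of $\Omega_\Abb$ forces $\int_{\partial B_s} \Omega_\Abb(\omega)\, dS(\omega) = 0$, so I may subtract the constant $u(x)$ from the integrand at no cost. Dividing by $|\partial B_s|\cdot s = n\omega_n s^n$ and multiplying by $n$ produces precisely $\mathscr A_s u(x) = \Acal u(B_s(x))/(\omega_n s^n)$.

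For general $T \in \mathscr D'(\R^n;V)$ with $|\Acal T|$ locally bounded, I would mollify by $T_\eps \coloneqq T * \eta_\eps \in C^\infty(\R^n;V)$ for a standard radial mollifier $\eta_\eps$. Since convolution commutes with $\Acal$, one has $\Acal T_\eps = (\Acal T) * \eta_\eps$, and the smooth identity applied to $T_\eps$ yields
\[
	\mathscr A_s T_\eps(x) = \frac{[(\Acal T)*\eta_\eps](B_s(x))}{\omega_n s^n} \qquad \text{for every } x \in \R^n.
\]
Passing to the limit $\eps \to 0^+$ on the left-hand side uses the continuity of $\mathscr A_s$ on $\mathscr D'$ (established in Section~\ref{sec:spherical}), giving $\mathscr A_s T_\eps \to \mathscr A_s T$ in $\mathscr D'$. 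On the right-hand side, a Fubini argument shows that $|\Acal T|(\partial B_s(x)) = 0$ for $\mathscr L^n$-a.e.\ $x$, since for each fixed $y$ the sphere $\{x : |y-x| = s\}$ has zero Lebesgue measure; together with standard properties of mollification on locally bounded measures this gives $[(\Acal T)*\eta_\eps](B_s(x)) \to \Acal T(B_s(x))$ for $\mathscr L^n$-a.e.\ $x$.

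Combining the two convergences shows that the distribution $\mathscr A_s T$ is represented by the locally bounded measurable function $x \mapsto \Acal T(B_s(x))/(\omega_n s^n)$, whence the claimed equality holds almost everywhere. The main technical subtlety is precisely the a.e.\ passage to the limit on the right-hand side at a fixed scale $s$; this is the content of the measure-theoretic Lemma~\ref{lem:Leb} cited in the statement, which upgrades the distributional identity to a pointwise-a.e.\ one. The whole argument is short once that lemma is in hand, and the same strategy --- smooth Gauss--Green plus mollification --- will be the backbone of the localization results $\mathscr A_s T \to \Acal T$ announced in Theorem~\ref{thm:spherical_localization}.
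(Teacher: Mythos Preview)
Your proposal is correct and follows essentially the same route as the paper: Gauss--Green for smooth maps gives $\mathscr A_s \varphi = \frac{\chi_{B_s}}{|B_s|}\star \Acal\varphi$, and mollification together with Lemma~\ref{lem:Leb} upgrades this to distributions with $\Acal T\in\Mcal$. The only place where the paper is slightly more careful is the ``combining'' step: rather than matching a $\mathscr D'$-limit on the left with a pointwise-a.e.\ limit on the right, the paper first observes $\mathscr A_s T = \frac{\chi_{B_s}}{|B_s|}\star \Acal T \in L^\infty_\loc$, so that $(\mathscr A_s T)_\eps \to \mathscr A_s T$ pointwise a.e.\ by Lebesgue differentiation, and then both sides of your mollified identity converge a.e.\ simultaneously (note $(\mathscr A_s T)_\eps = \mathscr A_s T_\eps$ by associativity). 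Your version is easily completed the same way, or by a dominated-convergence argument on the right-hand side.
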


\begin{remark}[On the continuity of $\mathscr A_s u$] In general, $\mathscr A_s u$ needs not to be continuous, even if $u \in L^1_\loc(\R^n;V)$ with $\Acal u \in \Mcal(\R^n;W)$. Take, for instance, the vector field on the plane given by
\[
	u(x_1,x_2) = \frac1{2\pi} \left(\frac{(x_1,x_2 + 1)}{|(x_1,x_2+1)|^2} - \frac{(x_1+1,x_2)}{|(x_1+1,x_2)|^2}\right)\,.
\]
It is straightforward to verify that $\diverg u = \delta_{(0,1)} - \delta_{(1,0)}$. In particular, the map $x \mapsto \diverg u(B_1(x))$ is not continuous at $0 \in \R^2$. \end{remark}

\subsection{Nonlocal radial operators} The localization result for the spherical operators builds directly into a swift proof of the estimates and localization results for the radial nonlocal operators. 

\begin{thmx}\label{thm:Lp_bounds}  
Let $T\in \mathscr D'(\R^n;V)$ be an admissible distribution. 
\begin{enumerate}[(1)]\setlength{\itemsep}{5pt}
\item The extended $L^p$-norm estimate
\[
	\|\Afrak_\rho T \|_p  \le  \|\rho\|_{L^1} \|\Acal T \|_p
\]
holds for all $1 \le p \le \infty$.
\item If $\Acal T$ is a measure, then
\[
	\int g(\Afrak_\rho T) \,d x \le \int g(\|\rho\|_{L^1}\Acal^a T) \,d x +  \|\rho\|_{L^1}\ \int g^\infty(d\Acal^s T)\,
\]
for all non-negative convex functions $g : W \to [0,\infty)$ with linear growth at infinity.
\item The extended $L^1$-estimate improves to
\[
\|\Afrak_\rho T \|_{L^1}  \le  \|\rho\|_{L^1} |\Acal T |(\R^n)\,
\]
provided that $\Acal T \in \Mcal_b$.
\end{enumerate} 
\end{thmx}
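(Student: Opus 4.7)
The plan is to derive all three claims from the spherical estimates of Theorem~\ref{thm:spherical_localization} through the Bochner-type superposition
\[
\Afrak_\rho T = \int_0^\infty w(r)\, \mathscr A_r T\, dr
\]
supplied by the preceding lemma, where $w$ is the radial weight determined by $\rho$; a polar-coordinate identity gives $\int_0^\infty w(r)\, dr = \|\rho\|_{L^1}$, so that $w(r)/\|\rho\|_{L^1}\, dr$ is a probability measure on $(0,\infty)$. Each item will follow by applying the corresponding spherical estimate under the $r$-integral.

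For (1) I would test $\Afrak_\rho T$ against an arbitrary $\varphi \in \mathscr D(\R^n;W)$ with $\|\varphi\|_{L^q} \le 1$, apply Fubini to the Bochner integral, and bound each slice via Theorem~\ref{thm:spherical_localization}(2) to obtain
\[
|\Afrak_\rho T(\varphi)| \le \int_0^\infty w(r)\, \|\mathscr A_r T\|_p\, \|\varphi\|_{L^q}\, dr \le \|\rho\|_{L^1}\, \|\Acal T\|_p.
\]
For (3), the additional information $\mathscr A_r T \in L^1(\R^n;W)$ from Theorem~\ref{thm:spherical_localization}(4) upgrades the superposition to a genuine Bochner integral in the Banach space $L^1$; Minkowski's inequality combined with $\|\mathscr A_r T\|_{L^1} \le |\Acal T|(\R^n)$ (which is Theorem~\ref{thm:spherical_localization}(2) at $p=1$, using that $\|\frarg\|_1$ coincides with the total variation for finite measures) then yields the stated $L^1$ bound.

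The substance lies in (2). Pointwise in $x$, re-expressing the superposition as an average against the probability density $w(r)/\|\rho\|_{L^1}$ and applying Jensen's inequality to the convex function $g$ gives
\[
g(\Afrak_\rho T(x)) \le \int_0^\infty \frac{w(r)}{\|\rho\|_{L^1}}\, g\bigl(\|\rho\|_{L^1}\, \mathscr A_r T(x)\bigr)\, dr.
\]
Integrating in $x$ via Fubini and using the positive $1$-homogeneity of $g^\infty$ to extract the factor $\|\rho\|_{L^1}$ in the singular term, the desired bound reduces to the following \emph{uniform in $r$} spherical area-type estimate:
\[
\int g(\mathscr A_r T)\, dx \le \int g(\Acal^a T)\, dx + \int g^\infty(d\Acal^s T), \qquad r>0. \tag{$\star$}
\]

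The main obstacle is $(\star)$, since Theorem~\ref{thm:spherical_localization}(1) only gives it on smooth test maps (where $\Acal u$ has no singular part) and Theorem~\ref{thm:spherical_localization}(4) recovers the area identity only in the limit $r \to 0^+$. My plan to establish $(\star)$ is by mollification. Setting $T_\eps := T * \phi_\eps$, the commutation of convolution with $\mathscr A_r$ together with Theorem~\ref{thm:spherical_localization}(1) applied to the smooth map $T_\eps$ gives $\int g(\mathscr A_r T_\eps)\, dx \le \int g(\Acal T_\eps)\, dx$. Using the integral representation $\mathscr A_r T(x) = \Acal T(B_r(x))/(\omega_n r^n)$ from Corollary~\ref{lem:GG}, standard properties of the mollification of a measure yield $\mathscr A_r T_\eps \to \mathscr A_r T$ at every Lebesgue point, so Fatou's lemma handles the left-hand side. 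On the right, $\Acal T_\eps = (\Acal T)*\phi_\eps$ converges to $\Acal T$ in the area sense of measures, and the classical Reshetnyak-type continuity theorem for convex linear-growth integrands produces the limit $\int g(\Acal^a T)\, dx + \int g^\infty(d\Acal^s T)$. This delivers $(\star)$ and hence (2).
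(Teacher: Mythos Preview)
Your argument is correct, and parts~(1) and~(3) match the paper's route exactly (Corollary~\ref{cor:superposition} for~(1), and the Cauchy-in-$L^1$ argument above~\eqref{eq:L1_lift} for~(3)).

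For part~(2) you take a genuinely different path to the key uniform estimate~$(\star)$. The paper does \emph{not} obtain~$(\star)$ by mollification; instead, both in the proof of Theorem~\ref{thm:spherical_localization} (step~(vii)) and in Proposition~\ref{prop:12}, it works directly with the convolution representation $\mathscr A_r T = \Acal T \star \tfrac{\chi_{B_r}}{|B_r|}$ from Corollary~\ref{lem:GG}, splits the measure via the elementary inequality $g(a+b)\le g(a)+g^\infty(b)$ for convex $g$ of linear growth, applies Jensen to each piece against the probability kernel $\chi_{B_r}/|B_r|$, and finishes with Young's inequality. This is more self-contained: it avoids invoking Reshetnyak continuity as an external black box and gives~$(\star)$ in two lines once the representation is in hand. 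Your mollification route is also valid and has the merit of recycling the smooth case cleanly, but it imports heavier machinery than necessary. One minor point: Theorem~\ref{thm:spherical_localization}(1) is stated for $u\in\mathscr D$, whereas $T_\eps$ need not be compactly supported; the inequality you need still holds because its proof only uses $\Acal T_\eps\in L^1$ and the convolution identity, so you should either note this or cut off $T_\eps$ first.
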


The proof of this result is a direct consequence of the superposition principle and the estimates for spherical operators (cf. Corollary~\ref{cor:superposition}), Proposition~\ref{prop:12}, and the discussion above Eqn.~\ref{eq:L1_lift}.

For locally summable maps with suitably bounded  $\Acal$-gradients, the operator $\Afrak_\rho$ is expressed, as one would expect, by a limit integral:
\begin{lemma}[Integral limit representation]\label{lem:integral_limit}
Let $u \in L^1_\loc(\R^n;V)$ be such that 
\[
\|\Acal u\|_p < \infty \qquad \text {for some $1 \le p < \infty$}.
\]  
Then, $\mathfrak A_\rho u$ is $p$-integrable and
\[
	g_\delta \longrightarrow \Afrak_\rho u \quad \text{in $L^p$,}
\]
where
\[
	g_\delta(x) \coloneqq  n \int_{\R^n \setminus B_\delta} \frac{\Abb(\xi)[u(x+\xi) - u(x)]}{|\xi|^2} \, \rho(\xi) \, d\xi.
\]
In particular,
\[
\Afrak_\rho u(x) = \lim_{\delta \to 0^+} \left(n \int_{\R^n \setminus B_\delta}  \frac{\Abb(\xi)[u(x+\xi) - u(x)]}{|\xi|^2} \, \rho(\xi) \, d\xi\right)
\]
for a.e. $x \in \R^n$.
\end{lemma}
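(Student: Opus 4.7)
The plan is to combine the superposition representation
\[
\mathfrak A_\rho u = \int_0^\infty \omega_n \hat\rho(r) r^{n-1} \mathscr A_r u \, dr
\]
(from the earlier lemma on the radial operator for admissible distributions) with the spherical $L^p$-bounds of Theorem~\ref{thm:spherical_localization}, recognizing $g_\delta$ as the same superposition integral truncated at the lower endpoint $\delta$.

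First I would switch to polar coordinates $\xi = r\omega$, with $r > \delta$ and $\omega \in \partial B_1$, inside the definition of $g_\delta(x)$. Using the radial symmetry $\rho(\xi) = \hat\rho(|\xi|)$, the linearity $\Abb(r\omega) = r\Abb(\omega)$, and the spherical-average formula defining $\mathscr A_r u(x)$, the inner sphere integral is identified, up to the dimensional constant, with $\mathscr A_r u(x)$, producing for a.e.\ $x \in \R^n$ the truncated identity
\[
g_\delta(x) = \int_\delta^\infty \omega_n \hat\rho(r) r^{n-1} \mathscr A_r u(x) \, dr.
\]
Subtracting from the superposition formula gives the crucial identity
\[
\mathfrak A_\rho u(x) - g_\delta(x) = \int_0^\delta \omega_n \hat\rho(r) r^{n-1} \mathscr A_r u(x) \, dr.
\]

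Both convergence statements then follow quickly. By Theorem~\ref{thm:spherical_localization}, $\|\mathscr A_r u\|_{L^p} \le \|\Acal u\|_p$ uniformly in $r > 0$ (with $|\Acal u|(\R^n)$ replacing $\|\Acal u\|_p$ when $p=1$ and $\Acal u$ is merely in $\Mcal_b$). Minkowski's integral inequality applied to the crucial identity yields
\[
\|\mathfrak A_\rho u - g_\delta\|_{L^p} \le \|\Acal u\|_p \int_{B_\delta} \rho(\xi)\, d\xi \longrightarrow 0
\]
as $\delta \to 0^+$ by absolute continuity, establishing $\mathfrak A_\rho u \in L^p$ and the $L^p$-convergence. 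For the pointwise a.e.\ convergence, set $H(x) := \int_0^\infty \omega_n \hat\rho(r) r^{n-1} |\mathscr A_r u(x)| \, dr$; Minkowski again yields $\|H\|_{L^p} \le \|\rho\|_{L^1}\|\Acal u\|_p < \infty$, so $H(x) < \infty$ for a.e.\ $x$, and at every such $x$ dominated convergence in the variable $\delta$ forces $\mathfrak A_\rho u(x) - g_\delta(x) \to 0$.

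The main obstacle is the polar rewriting itself, which has to be valid pointwise a.e.\ in $x$ simultaneously for every $\delta > 0$. This relies on Fubini--Tonelli applied to the jointly measurable map $(x,r) \mapsto \mathscr A_r u(x)$, whose pointwise description $\mathscr A_r u(x) = \Acal u(B_r(x))/(\omega_n r^n)$ is supplied by Corollary~\ref{lem:GG}, and on verifying that the integral defining $g_\delta(x)$ is absolutely convergent for a.e.\ $x$ (a consequence of $|\Abb(\xi)|/|\xi|^2 \le C/\delta$ on $\R^n \setminus B_\delta$, $\rho \in L^1$, and the admissibility hypothesis on $u$).
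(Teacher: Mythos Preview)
Your proposal is correct and follows essentially the same route as the paper (Lemma~\ref{lem:11}): both rewrite $g_\delta$ in polar coordinates as a truncated superposition of spherical operators and then control the tail via Minkowski together with the uniform bound $\|\mathscr A_r u\|_{L^p}\le\|\Acal u\|_p$ from Theorem~\ref{thm:spherical_localization}. The only organizational difference is that the paper shows $\{g_\delta\}$ is Cauchy in $L^p$ and separately identifies the distributional limit with $\Afrak_\rho u$, whereas you subtract directly from the already-established superposition formula---slightly more streamlined, but the same idea.
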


 \begin{corollary}
Let $1 \le p < \infty$ and let $u \in L^1_\loc(\R^n;V)$ be such that $\|\Acal u\|_p < \infty$. The extended limit
	\[
		   \lim_{\delta \to 0^+}  \, \int \bigg| \int_{\R^n \setminus B_\delta}\frac{\Abb(\xi)[u(x+\xi) - u(x)]}{|\xi|^2} \, \rho(\xi) \dd \xi\,\bigg|^p dx\in [0,\infty] 
	\]
exists and coincides with $\|\Afrak_\rho u\|_p^p$.
 \end{corollary}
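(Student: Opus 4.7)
The plan is to deduce this statement as an immediate consequence of Lemma~\ref{lem:integral_limit}. The bulk of the work --- showing that the truncated integrands $g_\delta$ converge in norm to $\Afrak_\rho u$ --- is already handled by that lemma; the corollary is simply a matter of transferring $L^p$-convergence of functions into convergence of their $L^p$-norms.

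First, I invoke Lemma~\ref{lem:integral_limit}: since $u \in L^1_{\loc}(\R^n;V)$ with $\|\Acal u\|_p < \infty$ for some $1 \le p < \infty$, the truncated difference-quotient integrals
\[
    g_\delta(x) \coloneqq n \int_{\R^n \setminus B_\delta} \frac{\Abb(\xi)[u(x+\xi) - u(x)]}{|\xi|^2} \, \rho(\xi) \, d\xi
\]
belong to $L^p$ and satisfy $g_\delta \to \Afrak_\rho u$ strongly in $L^p(\R^n;W)$ as $\delta \to 0^+$, with $\Afrak_\rho u \in L^p$.

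Second, I use continuity of the $L^p$-norm under strong convergence: the triangle inequality gives
\[
    \bigl| \|g_\delta\|_{L^p} - \|\Afrak_\rho u\|_{L^p} \bigr| \le \|g_\delta - \Afrak_\rho u\|_{L^p} \longrightarrow 0,
\]
so $\|g_\delta\|_{L^p} \to \|\Afrak_\rho u\|_{L^p}$, and therefore
\[
    \int |g_\delta(x)|^p \, dx \;=\; \|g_\delta\|_{L^p}^p \;\longrightarrow\; \|\Afrak_\rho u\|_{L^p}^p.
\]
Since $\Afrak_\rho u$ is represented by an $L^p$ function, the extended norm $\|\Afrak_\rho u\|_p$ coincides with the genuine $L^p$-norm $\|\Afrak_\rho u\|_{L^p}$, and the claimed limit identity follows. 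The extended limit is finite in this case, consistent with the $[0,\infty]$-valued statement.

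There is essentially no substantial obstacle here once Lemma~\ref{lem:integral_limit} is in hand; the only minor point to verify is that the notation $\|\Afrak_\rho u\|_p$ (the extended $L^p$-norm of a distribution) agrees with $\|\Afrak_\rho u\|_{L^p}$ whenever the distribution is representable by an $L^p$-function, which is recorded in the remark following the definition of the extended norm.
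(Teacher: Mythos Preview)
Your proposal is correct and follows essentially the same approach as the paper, which simply states that the corollary follows from Lemma~\ref{lem:11} (the detailed version of Lemma~\ref{lem:integral_limit}) and its proof. The only content is precisely what you identify: $L^p$-convergence $g_\delta \to \Afrak_\rho u$ yields convergence of the $p$-th powers of the norms, together with the observation that the extended norm agrees with the genuine $L^p$-norm on $L^p$ functions.
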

 
These two results follow from of Lemma~\ref{lem:11} and its proof.\\

\subsection{Localization} Here and in all that follows $\{\rho_\eps\}_{\eps > 0} \subset L^1(\R^n)$ is a family of radial functions satisfying~\eqref{eq:eps1}-\eqref{eq:eps}. Our first main localization result concerns a localization for test functions (cf. Corollary~\ref{cor:localization_distributions}):

\begin{lemma}\label{lem:Lp_extended}
Let $1 \le p \le \infty$ and let $\varphi \in \mathscr D(\R^n;V)$. 
Then
\[
	\Afrak_{\rho_\eps} \varphi \stackrel{L^p}\longrightarrow \Acal \varphi \qquad \text{as $\varepsilon \to 0^+$.}
\]
If moreover $\{\rho_\eps\}_\eps$ is uniformly compactly supported, then also
\[
 \Afrak_{\rho_\eps} \varphi \stackrel{\mathscr D}\longrightarrow \Acal \varphi   
 \qquad \text{as $\varepsilon \to 0^+$.}
\]

\end{lemma}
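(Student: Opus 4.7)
The strategy is to reduce to the spherical localization already established in Theorem~\ref{thm:spherical_localization} via the superposition formula
\[
  \Afrak_{\rho_\eps}\varphi \;=\; \int_0^\infty \omega_n\,\hat\rho_\eps(r)\,r^{n-1}\,\mathscr{A}_r\varphi\,dr
\]
furnished by Lemma~2.5. Since $\|\rho_\eps\|_{L^1}=1$, the nonnegative weight $\omega_n\hat\rho_\eps(r)r^{n-1}$ on $(0,\infty)$ has total mass equal to a fixed dimensional constant, while the concentration hypothesis~\eqref{eq:eps} ensures that this mass concentrates near $r=0$ as $\eps\to 0^+$. After normalizing appropriately, one rewrites the difference as
\[
  \Afrak_{\rho_\eps}\varphi - \Acal\varphi \;=\; \int_0^\infty \omega_n\,\hat\rho_\eps(r)\,r^{n-1}\,[\mathscr{A}_r\varphi - \Acal\varphi]\,dr,
\]
so the task becomes controlling the integrand for small $r$ while disposing of the tail as $\eps\to 0^+$.

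The first step is a quantitative uniform estimate for the spherical operator on test functions. Expanding $\varphi$ to second order around $x$ and using the elementary identity $n\fint_{\partial B_1}\Abb(\nu)[D\varphi(x)\nu]\,dS(\nu)=\Acal\varphi(x)$ (which follows from $\fint_{\partial B_1}\nu_i\nu_j\,dS(\nu)=\delta_{ij}/n$), one obtains
\[
  \|\mathscr{A}_r\varphi-\Acal\varphi\|_{L^\infty(\R^n)} \;\le\; C\,r\,\|D^2\varphi\|_\infty, \qquad r>0,
\]
with $C$ depending only on $n$ and $\Abb$. Because both $\mathscr{A}_r\varphi$ and $\Acal\varphi$ vanish outside the compact set $\supp\varphi+\bar B_r$, this upgrades for every $1\le p\le\infty$ to
\[
  \eta(\delta) \;\coloneqq\; \sup_{0<r\le\delta}\|\mathscr{A}_r\varphi-\Acal\varphi\|_{L^p}\;\longrightarrow\; 0 \quad\text{as } \delta\to 0^+.
\]

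For the $L^p$ statement, I would then apply Minkowski's integral inequality together with the bound $\|\mathscr{A}_r\varphi\|_{L^p}\le\|\Acal\varphi\|_{L^p}$ furnished by Theorem~\ref{thm:spherical_localization}(2) and split the integral at radius $\delta$:
\[
  \|\Afrak_{\rho_\eps}\varphi-\Acal\varphi\|_{L^p} \;\le\; \eta(\delta) \;+\; 2\|\Acal\varphi\|_{L^p}\,\|\rho_\eps\|_{L^1(\R^n\setminus B_\delta)}.
\]
Choosing $\delta$ first to make $\eta(\delta)$ as small as desired, and then letting $\eps\to 0^+$ so that $\|\rho_\eps\|_{L^1(\R^n\setminus B_\delta)}\to 0$ by~\eqref{eq:eps}, establishes $\Afrak_{\rho_\eps}\varphi\to\Acal\varphi$ in $L^p$ for every $1\le p\le\infty$.

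For the $\mathscr{D}$-convergence under the extra hypothesis $\supp\rho_\eps\subset\bar B_R$ uniform in $\eps$, the supports of all $\Afrak_{\rho_\eps}\varphi$ lie in the common compact set $\supp\varphi+\bar B_R$. Since $\Afrak_\rho$ commutes with translations, and hence with partial derivatives, one has $\partial^\alpha\Afrak_{\rho_\eps}\varphi=\Afrak_{\rho_\eps}(\partial^\alpha\varphi)$; applying the $L^\infty$-case already proved to each $\partial^\alpha\varphi\in\mathscr{D}(\R^n;V)$ delivers uniform convergence of every derivative, which is precisely $\mathscr{D}$-convergence. The main technical step is really the uniform Taylor estimate above; the remaining splitting argument is routine once the sharp $L^p$-bound of Theorem~\ref{thm:spherical_localization} is at hand.
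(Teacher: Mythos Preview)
Your proof is correct and follows essentially the same route as the paper's: both use the superposition formula to write $\Afrak_{\rho_\eps}\varphi-\Acal\varphi$ as a $\rho_\eps$-weighted average of $\mathscr A_r\varphi-\Acal\varphi$, split the integral at a cutoff radius $\delta$, control the inner piece via $\sup_{r\le\delta}\|\mathscr A_r\varphi-\Acal\varphi\|$ and the outer piece via the uniform bound $\|\mathscr A_r\varphi\|\le\|\Acal\varphi\|$ together with~\eqref{eq:eps}. The only cosmetic differences are that the paper phrases the argument abstractly for any seminorm $p$ satisfying $p(\mathscr A_r\varphi-\Acal\varphi)\to 0$ (thereby handling $L^p$ and $C^\infty$ simultaneously), whereas you supply the explicit Taylor estimate and treat the $\mathscr D$-case by commuting with $\partial^\alpha$; both are equivalent realizations of the same splitting argument.
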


Next, we state the localization for strongly admissible distributions, together with an \emph{energy criterion}  for the extended $L^p$-norms of the distribution $\Acal u$. The precise statement, which follows by gathering the results contained in Corollaries~\ref{cor:localization_distributions} and~\ref{cor:limit_exists}, is the following:

\begin{corollary}
Let $T \in \mathscr D'(\R^n;V)$ be a strongly admissible distribution (or assume that the family $\{\rho_\eps\}$ is uniformly compactly supported). Then
\[
\Afrak_{\rho_\eps} T \stackrel{\mathscr D'}\longrightarrow \Acal T \qquad \text{as $\varepsilon \to 0^+$.}
\]
Moreover, the extended limit 
\[
	\lim_{\eps \to 0^+} \|\Afrak_{\rho_\eps} T\|_{p}  \in [0,\infty]
\]	
exists for all $1 \le p \le \infty$. 
\end{corollary}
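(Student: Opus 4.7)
The natural approach is to reduce everything to the Bochner-type superposition formula
\[
\Afrak_{\rho_\eps} T = \int_0^\infty c_n \hat{\rho_\eps}(r)\, r^{n-1}\, \mathscr A_r T \, dr
\]
established in the preceding lemma, where $c_n$ is the dimensional normalization constant. In polar coordinates, the measure $d\mu_\eps(r) \coloneqq c_n \hat{\rho_\eps}(r)\, r^{n-1}\, dr$ on $(0,\infty)$ has total mass $\|\rho_\eps\|_{L^1} = 1$, and the concentration property~\eqref{eq:eps} translates into $\mu_\eps([\delta,\infty)) \to 0$ for every fixed $\delta > 0$. Pairing with a test function $\psi \in \mathscr D(\R^n;W)$ yields
\[
\Afrak_{\rho_\eps} T(\psi) = \int_0^\infty f_T(r)\, d\mu_\eps(r), \qquad f_T(r) \coloneqq \mathscr A_r T(\psi),
\]
and Theorem~\ref{thm:spherical_localization}(2) provides the pointwise limit $f_T(r) \to \Acal T(\psi)$ as $r \to 0^+$. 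A standard split-at-$\delta$ argument (given $\eta>0$, choose $\delta$ so that $|f_T - \Acal T(\psi)| < \eta$ on $(0,\delta)$, then let $\eps \to 0^+$ so that $\mu_\eps([\delta,\infty)) \to 0$) reduces the distributional convergence to a \emph{uniform bound} on $|f_T|$ over the relevant portion of $(0,\infty)$.

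This uniform boundedness is the main technical step, and is where the admissibility hypotheses enter. I would proceed case-by-case following Definition~\ref{def:admissible}. If $\Acal T \in L^{p_0}$ or $\Mcal_b$ for some $p_0$, Theorem~\ref{thm:spherical_localization}(2) directly yields $|f_T(r)| \le \|\mathscr A_r T\|_{p_0} \|\psi\|_{q_0} \le \|\Acal T\|_{p_0} \|\psi\|_{q_0}$, uniformly in $r$. If instead $T \in L^{p_0}$ or $T \in \Mcal_b$, one dualizes $f_T(r) = T(\mathscr A_r^* \psi)$ and applies the same estimate to $\Acal^*$ and $\psi$, giving $\|\mathscr A_r^* \psi\|_{q_0} \le \|\Acal^* \psi\|_{q_0} < \infty$. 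If $\supp T$ is compact, then $T$ extends continuously to all of $C^\infty(\R^n;V)$ with finite order $k$ on a neighborhood of $\supp T$; a uniform bound then follows by combining $\mathscr D$-boundedness of $\{\mathscr A_r^* \psi\}_{r \in (0, r_0]}$ (via Theorem~\ref{thm:spherical_localization}(1) applied to $\Acal^*$) with the elementary tail decay $\|\partial^\alpha \mathscr A_r^* \psi\|_{L^\infty} \lesssim \|\partial^\alpha \psi\|_{L^\infty}/r$ for $r \ge r_0$, which follows directly from the definition. Under the alternative scenario that $\{\rho_\eps\}$ is uniformly supported in a fixed ball $B_R$, only $r \in (0,R]$ is relevant: continuity of $r \mapsto \mathscr A_r^* \psi$ into the Fr\'echet space of smooth functions supported in $\supp \psi + \overline{B}_R$, combined with continuity of $T$ on that space, supplies the needed bound.

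The existence of $\lim_\eps \|\Afrak_{\rho_\eps} T\|_p \in [0,\infty]$ is then a formal consequence of the just-proved distributional convergence together with Theorem~\ref{thm:Lp_bounds}(1). Since $\|\rho_\eps\|_{L^1}=1$, that theorem gives $\|\Afrak_{\rho_\eps} T\|_p \le \|\Acal T\|_p$ for every $\eps$, whence $\limsup_{\eps \to 0^+} \|\Afrak_{\rho_\eps} T\|_p \le \|\Acal T\|_p$. Conversely, $\|T\|_p = \sup\{T(\varphi) : \|\varphi\|_{L^q} \le 1\}$ is a supremum of $\mathscr D'$-continuous functionals, hence lower semicontinuous along distributional convergence; combined with $\Afrak_{\rho_\eps} T \to \Acal T$ in $\mathscr D'$, this yields $\liminf_{\eps \to 0^+} \|\Afrak_{\rho_\eps} T\|_p \ge \|\Acal T\|_p$. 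The two inequalities sandwich the limit and force it to equal $\|\Acal T\|_p \in [0,\infty]$. The main obstacle of the program is thus the uniform boundedness of $f_T$, the admissibility hypotheses being tailored precisely to control its behavior as $r \to \infty$.
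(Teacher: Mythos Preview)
Your argument is correct, and for the second claim (existence of the extended limit) it coincides verbatim with the paper's: the sandwich between $\limsup \le \|\Acal T\|_p$ from Theorem~\ref{thm:Lp_bounds}(1) and $\liminf \ge \|\Acal T\|_p$ from lower semicontinuity of $\|\cdot\|_p$ under $\mathscr D'$-convergence is exactly Corollary~\ref{cor:limit_exists}.

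For the distributional convergence the paper takes a slightly different organizational route. Rather than bounding $f_T(r)=\mathscr A_r T(\psi)$ case-by-case and running the split-at-$\delta$ argument on the scalar integral $\int_0^\infty f_T(r)\,d\mu_\eps(r)$, the paper first establishes the convergence $\Afrak_{\rho_\eps}^* \psi \to \Acal^*\psi$ in $C^\infty$ and in every $L^q$ for test functions $\psi$ (Corollary~\ref{cor:localization_distributions}, first part, via the same split-at-$\delta$ idea applied at the level of seminorms), and then passes to general admissible $T$ by duality $\Afrak_{\rho_\eps} T(\psi)=T(\Afrak_{\rho_\eps}^*\psi)$. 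Your direct approach and the paper's dualized approach rest on the same estimates (the uniform-in-$r$ bounds you invoke are precisely those proved in Lemma~\ref{lem:definition_radial}), so the content is the same; the paper's version has the advantage of isolating the test-function localization as a reusable lemma, while yours is more self-contained and makes the role of each admissibility hypothesis explicit. One simplification available to you in case~(a): since $\mathscr A_r^*\psi = \Acal^*\psi \star |B_r|^{-1}1_{B_r}$, the bound $\|\partial^\alpha \mathscr A_r^*\psi\|_{L^\infty}\le \|\partial^\alpha \Acal^*\psi\|_{L^\infty}$ holds uniformly in $r>0$, so no separate small-$r$/large-$r$ analysis is needed there.
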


This result and Corollary~\ref{cor:2} convey the following $L^p$-boundedness criterion: 

\begin{thmx}[$L^p$-localization]\label{thm:Lp}
Let $1 < p \le \infty$ and let $T \in \mathscr D'(\R^n;V)$ be a strongly admissible distribution (or assume that the family $\{\rho_\eps\}$ is uniformly compactly supported). Then, $\Acal T \in L^p(\R^n)$ if and only if 
\[
	\liminf_{\eps \to 0^+}   \|\Afrak_{\rho_\eps} T\|_{p}  < \infty.
\]

If either of the latter conditions hold and $1 < p < \infty$, then also
\[
	\mathfrak A_{\rho_\eps} T \longrightarrow \Acal T \quad \text{strongly in $L^p$.}
\]
\end{thmx}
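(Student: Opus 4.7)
The forward implication is immediate. Since $\|\rho_\eps\|_{L^1} = 1$ by~\eqref{eq:eps1}, Theorem~\ref{thm:Lp_bounds}(1) gives
\[
	\|\Afrak_{\rho_\eps} T\|_p \;\le\; \|\rho_\eps\|_{L^1}\,\|\Acal T\|_p \;=\; \|\Acal T\|_p
\]
for every $\eps > 0$, so that $\liminf_{\eps \to 0^+}\|\Afrak_{\rho_\eps} T\|_p < \infty$ whenever $\Acal T \in L^p$.

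For the converse, fix a subsequence $\eps_k \downarrow 0$ with $\sup_k \|\Afrak_{\rho_{\eps_k}} T\|_p < \infty$. By the remark following the definition of extended $L^p$-norm, for each $k$ there is $u_k \in L^p(\R^n;W)$ representing $\Afrak_{\rho_{\eps_k}} T$ and $\sup_k \|u_k\|_{L^p}$ is finite. When $1 < p < \infty$, reflexivity of $L^p$ (Banach--Alaoglu) produces a further subsequence weakly convergent to some $u \in L^p$; when $p = \infty$, we use weak-$*$ compactness of $(L^1)^*$. On the other hand, by the Corollary preceding the statement (the distributional localization for strongly admissible $T$, or compactly supported $\{\rho_\eps\}$), $\Afrak_{\rho_{\eps_k}} T \to \Acal T$ in $\mathscr D'$. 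Testing against $\varphi \in \mathscr D(\R^n;W)$ and passing to the limit in the weak (or weak-$*$) sense identifies $u$ with $\Acal T$; hence $\Acal T \in L^p$.

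Finally, suppose $1 < p < \infty$ and $\Acal T \in L^p$. The argument of the previous paragraph, applied to an arbitrary subsequence, shows that \emph{every} subsequence admits a further subsequence along which $\Afrak_{\rho_\eps} T \to \Acal T$ weakly in $L^p$; by a standard Urysohn-type argument the whole family $\Afrak_{\rho_\eps} T$ converges weakly to $\Acal T$ in $L^p$. Weak lower semicontinuity of the norm combined with Theorem~\ref{thm:Lp_bounds}(1) then yields
\[
	\|\Acal T\|_{L^p} \;\le\; \liminf_{\eps \to 0^+}\|\Afrak_{\rho_\eps} T\|_{L^p} \;\le\; \limsup_{\eps \to 0^+}\|\Afrak_{\rho_\eps} T\|_{L^p} \;\le\; \|\Acal T\|_{L^p},
\]
so $\|\Afrak_{\rho_\eps} T\|_{L^p} \to \|\Acal T\|_{L^p}$. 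Since $L^p$ is uniformly convex for $1 < p < \infty$ (Radon--Riesz property), weak convergence together with convergence of norms upgrades to strong $L^p$-convergence.

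\medskip

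The main obstacles in this approach are (a) the reverse direction, which relies on the identification that boundedness of the extended $L^p$-norm forces the distribution to be represented by an $L^p$-function, and (b) ensuring that the whole family (and not just a subsequence) converges weakly before invoking the Radon--Riesz property; both are handled by the Urysohn subsequence principle combined with uniqueness of distributional limits.
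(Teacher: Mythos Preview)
Your argument is correct. The forward direction and the converse via weak/weak-$*$ compactness plus distributional identification are both sound, and the upgrade to strong $L^p$ convergence via uniform convexity (Radon--Riesz) is valid for $1<p<\infty$.

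The paper organizes the proof somewhat differently. For the converse it appeals to Corollary~\ref{cor:limit_exists}, which shows directly that the full limit $\lim_{\eps\to 0^+}\|\Afrak_{\rho_\eps}T\|_p$ exists and equals the extended norm $\|\Acal T\|_p$; thus finiteness of the $\liminf$ immediately gives $\|\Acal T\|_p<\infty$ without any weak-compactness extraction. For the strong convergence, rather than invoking Radon--Riesz at the level of the radial operator, the paper goes through Corollary~\ref{cor:2}: one first has $\mathscr A_s T\to \Acal T$ strongly in $L^p$ for the spherical operators (Theorem~\ref{thm:spherical_localization}(3), whose proof does use the Radon--Riesz idea), and then the superposition formula $\Afrak_{\rho_\eps}T=\int_0^\infty n\omega_n\hat\rho_\eps(r)r^{n-1}\mathscr A_r T\,dr$ together with the concentration property~\eqref{eq:eps} yields the direct estimate
\[
\|\Afrak_{\rho_\eps}T-\Acal T\|_{L^p}\le \sup_{r\le\delta}\|\mathscr A_r T-\Acal T\|_{L^p}+2\|\Acal T\|_{L^p}\|\rho_\eps\|_{L^1(\R^n\setminus B_\delta)}.
\]
Your route is more self-contained and bypasses the spherical-to-radial transfer; the paper's route has the advantage that the direct estimate in Corollary~\ref{cor:2} also covers $p=1$ (needed for Corollary~\ref{cor:L1_sufficiency}), which the uniform-convexity argument cannot reach.
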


\begin{remark}[Sharpness of the range of exponents] The first assertion still holds in the following sense: if  $\Acal u \in L^1$, then the  
representation of {Theorem}~\ref{thm:Lp} exists and agrees with the  $L^1$-norm of $\Acal u$ (cf. Corollary~\ref{cor:L1_sufficiency} below). In general, the converse fails due to concentration effects.  
On the other hand, the strong convergence $\mathfrak{A}_{\rho_\eps} T \to T$ in $L^\infty$ is straightforward for $T$ whenever the gradient $\Acal T$ is continuous and uniformly bounded. However, this convergence may fail for general $\Acal T \in L^\infty$ (see Example~\ref{example:no_infty}). 
In conclusion, the restriction $p \notin \{1,\infty\}$ on the exponent range in Theorem~\ref{thm:Lp} is sharp.\end{remark}

	 Our second main result covers the case  when $\Acal u$ is a vector-valued Radon measure. 
	Notice that, with our definition of extended norm, it holds that a $W$-valued distribution $T$ belongs to $\Mcal_b(\R^n;W)$, the space of $W$-valued Radon measures with finite total variation, if and only if $\|T\|_1 < \infty$, in which case it also holds $\|T\|_1 = |T|(\R^n)$. In particular, a direct consequence of Lemma~\ref{lem:Lp_extended} and Corollary~\ref{cor:2} is the following criterion and approximation result:

\begin{thmx}[Area localization]\label{thm:M}
Let $T \in \mathscr D'(\R^n;V)$ be a strongly admissible distribution (or else assume that the family $\{\rho_\eps\}$ is uniformly compactly supported). Then $\Acal T\in \Mcal_b(\R^n;W)$ if and only if 
\[
	\liminf_{\eps \to 0^+}   \|\Afrak_{\rho_\eps} T\|_1  < \infty.
\] 
Moreover, in this case  (cf. Theorem~\ref{thm:Lp_bounds}.3)
\[
	\mathfrak A_{\rho_\eps} T \, \mathscr L^n \longrightarrow \Acal T \quad \text{in the area sense of measures.}
\]
In particular, 
\[
	\lim_{\eps \to 0} \int f(\Afrak_{\rho_\eps} T) \, dx  = \int f(\Acal^a T) \, dx +\int f^\infty(\frac{\Acal T}{|\Acal T|}) \, d|\Acal^s T|
\]
for all continuous integrands $f: W \to \R$ with with a well-defined recession limit function $f^\infty : W \to \R$.\end{thmx}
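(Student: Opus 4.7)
The plan is to split the argument into three parts (necessity, sufficiency, area convergence) and assemble them from already-established pieces of the paper.

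\smallskip

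\noindent\textbf{Necessity.} If $\Acal T \in \Mcal_b(\R^n;W)$, then Theorem~\ref{thm:Lp_bounds}(3), combined with the normalization $\|\rho_\eps\|_{L^1}=1$ from~\eqref{eq:eps1}, yields the uniform bound
\[
\|\Afrak_{\rho_\eps} T\|_{L^1} \le |\Acal T|(\R^n) \qquad \text{for every } \eps>0,
\]
so the $\liminf$ is trivially finite. This step is immediate.

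\smallskip

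\noindent\textbf{Sufficiency.} Here I would invoke the distributional convergence $\Afrak_{\rho_\eps} T \to \Acal T$ in $\mathscr D'$, which is available under our standing hypotheses thanks to Lemma~\ref{lem:Lp_extended} and its extension to (strongly) admissible distributions. For every $\psi \in \mathscr D(\R^n;W)$ with $\|\psi\|_{L^\infty}\le 1$ one then passes to the limit in the pairing to obtain
\[
|\Acal T(\psi)| = \lim_{\eps \to 0^+}|\Afrak_{\rho_\eps} T(\psi)| \le \liminf_{\eps \to 0^+}\|\Afrak_{\rho_\eps} T\|_1.
\]
Taking the supremum over admissible $\psi$ gives $\|\Acal T\|_1 \le \liminf_\eps \|\Afrak_{\rho_\eps} T\|_1 <\infty$, and the remark right after the definition of the extended $L^p$-norm identifies any distribution of finite extended $L^1$-norm with a finite Borel measure, proving $\Acal T\in \Mcal_b$.

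\smallskip

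\noindent\textbf{Area convergence.} For the final statement I would run a Reshetnyak-type continuity argument in three steps. First, combine the $\mathscr D'$-convergence of Lemma~\ref{lem:Lp_extended} with the uniform $L^1$-bound from the necessity step to upgrade to weak-$*$ convergence $\Afrak_{\rho_\eps} T\,\mathscr L^n \overset{*}\rightharpoonup \Acal T$ in $\Mcal_b$. Second, apply lower semicontinuity of the total variation under weak-$*$ convergence to get $|\Acal T|(\R^n)\le \liminf_\eps \|\Afrak_{\rho_\eps} T\|_{L^1}$; pairing this with the reverse estimate from Theorem~\ref{thm:Lp_bounds}(3) gives $\|\Afrak_{\rho_\eps} T\|_{L^1}\to |\Acal T|(\R^n)$, i.e.\ \emph{strict} convergence of the associated measures. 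Third, invoke Reshetnyak's continuity theorem to convert strict convergence into area convergence for every continuous integrand with a well-defined recession function. Since each measure $\Afrak_{\rho_\eps} T\,\mathscr L^n$ is absolutely continuous, its singular part vanishes, and the left-hand side collapses to $\int f(\Afrak_{\rho_\eps} T)\,dx$, recovering the claimed identity.

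\smallskip

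\noindent\textbf{Main obstacle.} The technical pivot of the whole argument is closing the strict-convergence loop in the second step: the sharp upper bound in Theorem~\ref{thm:Lp_bounds}(3) is exactly what is needed to complement the a priori lower semicontinuity bound, and the distributional convergence of Lemma~\ref{lem:Lp_extended} is exactly what lets one pass from convergence against $\mathscr D$-test functions to weak-$*$ convergence in $\Mcal_b$. Once these two ingredients are aligned, the Reshetnyak step is mechanical, so all the substantive work lies in having sharp constants on both sides of the $L^1$-estimate.
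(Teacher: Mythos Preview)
Your necessity and sufficiency steps are fine and essentially match the paper's route through Corollary~\ref{cor:limit_exists}. The gap is in the third step of your area-convergence argument: Reshetnyak's continuity theorem under \emph{strict} convergence only delivers convergence of functionals built from positively $1$-homogeneous integrands, not of general linear-growth ones. Strict convergence does \emph{not} imply area convergence. For instance, $u_j(x)=1+\sin(2\pi jx)$ on $(0,1)$ converges strictly to the constant $1$ (both sides have total variation $1$), yet
\[
\int_0^1\sqrt{1+u_j^2}\,dx=\int_0^1\sqrt{1+(1+\sin 2\pi t)^2}\,dt>\sqrt 2=\int_0^1\sqrt{1+1^2}\,dx
\]
by strict Jensen. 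So your third step begs the question: the version of Reshetnyak that handles non-homogeneous $f$ requires area-strict convergence as its \emph{hypothesis}, which is precisely the conclusion you are trying to establish.

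The fix, and this is what the paper actually does in Corollary~\ref{cor:2}(2), is to use the full strength of Theorem~\ref{thm:Lp_bounds}(2) (equivalently Proposition~\ref{prop:12}) rather than just part~(3). Apply that convex upper bound with the specific area integrand $g(w)=\sqrt{1+|w|^2}$, $g^\infty(w)=|w|$, to obtain directly
\[
\limsup_{\eps\to 0}\int\sqrt{1+|\Afrak_{\rho_\eps}T|^2}\,dx\le\int\sqrt{1+|\Acal^aT|^2}\,dx+|\Acal^sT|(\R^n),
\]
and pair this with Reshetnyak's lower-semicontinuity inequality for the same functional under weak-$*$ convergence. That closes the loop at the level of the area functional itself, and once area convergence is in hand the statement for all admissible $f$ follows. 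In short, the sharp constant you need is not the one in the $L^1$-estimate but the one in the Jensen-type estimate~(2) for the area integrand.
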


The crucial advantage of the area convergence of measures (over the usual strict convergence) is that it discriminates the appearance of $L^1$ concentrations when the limit is in $L^1$ (see Lemma~\ref{lem:area_discriminates} in the Appendix). In particular, we obtain the following refinement of {Theorem}~\ref{thm:M}:

\begin{corollary}\label{cor:L1_sufficiency} 
Let $T \in \mathscr D'(\R^n;V)$ be such that $\Acal T \in L^1(\R^n;W)$. Then
\[
\mathfrak A_{\rho_\eps} T  \longrightarrow \Acal T  \quad \text{strongly in $L^1$.}
\]
\end{corollary}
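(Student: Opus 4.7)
The plan is to invoke Theorem~\ref{thm:M} directly and then upgrade the resulting area convergence of measures to strong $L^1$ convergence of densities by means of the auxiliary Lemma~\ref{lem:area_discriminates} from the Appendix.

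First I verify admissibility: since $\Acal T \in L^1(\R^n;W) \subset \Mcal_b(\R^n;W)$, condition (c) of Definition~\ref{def:admissible} is satisfied and $T$ is strongly admissible. Theorem~\ref{thm:M} then applies and yields
\[
    \mathfrak A_{\rho_\eps} T \, \mathscr L^n \tolong \Acal T \qquad \text{in the area sense of measures, as $\eps \to 0^+$.}
\]
Moreover, Theorem~\ref{thm:Lp_bounds}.(3) combined with the normalization $\|\rho_\eps\|_{L^1}=1$ gives the uniform bound $\|\mathfrak A_{\rho_\eps} T\|_{L^1} \le \|\Acal T\|_{L^1}$, so each $\mathfrak A_{\rho_\eps} T$ is an honest element of $L^1(\R^n;W)$.

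The key structural input is that $\Acal T \in L^1$ forces the Lebesgue--Radon--Nikod\'ym decomposition of the limit to be trivial: $\Acal^a T = \Acal T$ and $\Acal^s T \equiv 0$. Thus the limit measure is absolutely continuous with $L^1$ density, which is precisely the configuration that Lemma~\ref{lem:area_discriminates} is designed to handle: area convergence of a sequence of absolutely continuous measures toward an absolutely continuous limit with $L^1$ density implies strong $L^1$ convergence of the corresponding densities. Applying the lemma to $\{\mathfrak A_{\rho_\eps} T\}_\eps$ with limit $\Acal T$ delivers
\[
    \mathfrak A_{\rho_\eps} T \tolong \Acal T \qquad \text{strongly in } L^1(\R^n;W),
\]
which is the desired statement.

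The main (and essentially only) obstacle is packaged inside Lemma~\ref{lem:area_discriminates}: the non-trivial content there is that area convergence, in contrast with strict or weak-star convergence of measures, excludes the concentration of $L^1$ mass onto lower-dimensional sets. Granted that lemma, the corollary follows as a clean deduction from Theorem~\ref{thm:M} and the absolute continuity of the limit $\Acal T$.
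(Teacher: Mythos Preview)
Your proof is correct and follows exactly the route the paper indicates in the paragraph preceding the corollary: apply Theorem~\ref{thm:M} to obtain area convergence (using that $\Acal T \in L^1 \subset \Mcal_b$ makes $T$ strongly admissible), then invoke Lemma~\ref{lem:area_discriminates} to upgrade to strong $L^1$ convergence since the limit measure is absolutely continuous. The paper also contains an alternative direct route via Corollary~\ref{cor:2}(1) (which covers $p=1$), but your argument is the one the text explicitly signals.
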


\subsection{Functional equivalences} Let $s>0$ and let $u \in L^p(\R^n;V)$. A direct consequence of Theorem~\ref{thm:spherical_localization}.(2) is that
\[
 	\ker_{\mathscr D'} \Acal \subset \ker_{\mathscr D'} \mathscr A_s. 
 \]
On the other hand, Theorem~\ref{thm:Lp_bounds}.(1) conveys the set inclusion
 \[
 		\ker_{\mathscr D'} \Acal \subset \ker_{\mathscr D'} \mathfrak A_\rho\,. 
 \]
The reverse inclusion is a delicate question, which we will address next (at least partially).  To achieve this, let us recall that the Bessel function $J_\alpha$ of real order $\alpha > 0$ is defined on all $t \ge 0$ by its Poisson representation formula as
\begin{align*}
    J_\alpha(t) = \frac{t^\alpha}{2^\alpha \Gamma(\alpha +\frac 12)\Gamma(\frac 12)} \int_{-1}^{1} \cos(ts)(1-s^2)^\alpha \,\frac{ds}{\sqrt{1-s^2}}\,.
\end{align*}
Let us also recall the following explicit expression for the Fourier transform of the open unit ball and the unit sphere in terms of Bessel functions (see, e.g.,~\cite[p. 578]{Grafakos}): 
\[
	\widehat{1_{B_1}}(\xi) = \frac{J_{\frac n2}(2\pi|\xi|)}{|\xi|^{\frac n2}}\,, \quad \widehat{\sigma^{n-1}}(\xi) = c(n) \frac{J_{\frac{n-2}{2}}(2\pi|\xi|)}{|\xi|^\frac{n-2}{2}}\,, \qquad \xi \in \R^N\,.
\]
A basic property of Bessel functions is the following decay estimate
\[
	|J_\alpha(t)| \le C(\alpha) t^{-\frac12} \quad \text{for $t > 0$.}
\]
A consequence of these observations is the following (sharp) decay estimate for the characteristic function of the surface measure on the unit sphere:
\[
	|\widehat{\sigma^{n-1}}(\xi)| \le C(n) |\xi|^{-\frac{n- 1}{2}} \quad \text{for $\xi \in \R^n$}.
\]
In particular, this shows that, for $n \ge 2$, $\widehat{\sigma^{n-1}} \in L^p(\R^n)$ if and only if $p > \frac{2n}{n-1}$. For this and other facts concerning Bessel functions, we refer the reader to~\cite{grafakos2008classical} and references therein. 

Finally, let us introduce the set
 \[
    \mathscr V_\Acal \coloneqq \{\zeta \in \R^n : \rank \Abb(\zeta) > 0\}\,,
 \]
 of Fourier frequencies where the operator $\Acal$ is non-trivial. The next result addresses the equivalence of kernels for spherical operators on a range of exponents:

\begin{proposition}\label{prop:spherical_kernel}
	Let $s > 0$. 
	\begin{enumerate}
		\item If $1 \le p \le 2$, then      \begin{equation}\label{eq:kernels}
     \ker_{L^p(\R^n)} \mathscr A_s = \ker_{L^p(\R^n)} \Acal\,.
     \end{equation}
     		If moreover $n=1$, this equivalence holds for all $1 \le p < \infty$.

		\item If $p > \frac {2n}{n-1}$ (or $p = \infty$ when $n=1$) and $\Acal$ is non-trivial, 
		then for each $s>0$	there exists a radially symmetric function $u_s \in (C^\infty \cap L^p)(\R^n)$ satisfying
		\[	
			\mathscr A_s u_s = 0 \quad \text{and} \quad \Acal u_s \neq 0.
		\]
		In particular, for every $s > 0$,
		\[
			  \ker_{L^p(\R^n)} \Acal \subsetneq \ker_{L^p(\R^n)} \mathscr A_s\,.
		\]
		\item Let $\Tbb^n$ denote the $n$-dimensional flat torus and let $1 \le p \le \infty$. The kernel identity 
     \[
     \ker_{L^p(\Tbb^n)} \mathscr A_s = \ker_{L^p(\Tbb^n)} \Acal
     \]
    is equivalent with the constraint 
     \[
     J_\frac{n}2(2\pi s|m|) \neq 0\, \quad \text{for all $m \in \Zbb^n \cap \mathscr V_\Acal$\,.}
     \]
	\end{enumerate}
\end{proposition}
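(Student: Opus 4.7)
The plan is to push the entire analysis onto the Fourier side, using the convolution representation $\mathscr A_s u = |B_s|^{-1} (\Acal u) * \ONE_{B_s}$ provided by Corollary~\ref{lem:GG}. This gives the scalar multiplier identity
\[
\widehat{\mathscr A_s u}(\xi) = \frac{1}{|B_s|}\,\widehat{\ONE_{B_s}}(\xi)\,\widehat{\Acal u}(\xi), \qquad \widehat{\ONE_{B_s}}(\xi) = \frac{s^{n/2} J_{n/2}(2\pi s|\xi|)}{|\xi|^{n/2}},
\]
whose multiplier vanishes precisely on the countable union of spheres $\{|\xi| = r_j/(2\pi s)\}$, where $r_j$ runs over the positive zeros of $J_{n/2}$. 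This locus is Lebesgue-null in $\R^n$ for $n\ge 2$ and discrete in $\R$ for $n=1$, and this null-set property drives all three parts.

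For part (1) in the range $1 \le p \le 2$, I would invoke Hausdorff--Young to realize $\hat u \in L^{p'}$ as an ordinary function, so that $\widehat{\Acal u} = 2\pi\ii\,\Abb(\xi)\hat u(\xi)$ is also a function; the pointwise identity $\widehat{\ONE_{B_s}}\cdot \widehat{\Acal u} = 0$ then forces $\widehat{\Acal u} = 0$ almost everywhere. For the extra range $2 < p < \infty$ when $n=1$, the Fourier argument no longer delivers a function, but a direct computation shows $\mathscr A_s u(x) = (2s)^{-1}A_1[u(x+s) - u(x-s)]$; thus $\mathscr A_s u = 0$ forces $A_1 u$ to be $2s$-periodic, which combined with $u \in L^p(\R)$, $p<\infty$, makes the $V/\ker A_1$-component of $u$ vanish and hence $\Acal u = A_1\partial_x u = 0$.

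For part (2) I would construct the counterexample by prescribing $\hat u_s$ directly as a singular surface measure. Fix a positive zero $r_0$ of $J_{n/2}$, set $R = r_0/(2\pi s)$, and pick $v \in V$ with $\Abb(\xi_0)v \ne 0$ for some $\xi_0 \in \R^n$ (possible because $\Acal$ is non-trivial). Take $u_s$ to be the inverse Fourier transform of $v$ times the surface measure on the sphere of radius $R$. Compactness of the spectral support yields $u_s \in C^\infty(\R^n;V)$ via Paley--Wiener, radiality of the measure yields radiality of $u_s$, and the Bessel decay $|J_\alpha(t)| \le C t^{-1/2}$ forces $|u_s(x)| \lesssim |x|^{-(n-1)/2}$, so $u_s \in L^p(\R^n)$ precisely when $p > 2n/(n-1)$. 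Since $J_{n/2}(2\pi sR) = J_{n/2}(r_0) = 0$, the multiplier identity yields $\mathscr A_s u_s = 0$; meanwhile homogeneity of $\Abb$ together with the choice of $v$ guarantee $\Abb(\cdot)v$ is nonzero on a nonempty open subset of the sphere of radius $R$, so $\widehat{\Acal u_s} = 2\pi\ii\,\Abb(\cdot) v\,\sigma^{n-1}_R$ is a nonzero tempered distribution. For the $n=1$, $p=\infty$ variant I would replace the spherical measure by $\delta_R+\delta_{-R}$, producing $u_s(x) = 2v\cos(2\pi Rx)$.

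Part (3) reduces to the same multiplier identity applied coefficient-wise on Fourier series over $\Tbb^n$: the condition $\mathscr A_s u = 0$ reads $J_{n/2}(2\pi s|m|)\,\Abb(m)c_m = 0$ for every $m \in \Zbb^n$, while $\Acal u = 0$ reads $\Abb(m)c_m = 0$ for every $m$. If $J_{n/2}(2\pi s|m|) \ne 0$ for all $m \in \Zbb^n \cap \mathscr V_\Acal$ the two conditions are manifestly equivalent; conversely, if the Bessel factor vanishes at some $m_0 \in \Zbb^n \cap \mathscr V_\Acal$, choosing $c_{m_0}\in V$ with $\Abb(m_0)c_{m_0}\ne 0$ yields a single-frequency trigonometric polynomial separating the kernels and lying in every $L^p(\Tbb^n)$. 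The main technical hurdle is part (2): one must promote the Fourier-side obstruction, which is naturally a singular surface measure, to a genuine function in $L^p(\R^n)$ on the physical side. This is exactly what the sharp stationary-phase decay of $\widehat{\sigma^{n-1}}$ accomplishes, and it is what pins down the threshold exponent $p = 2n/(n-1)$ as both \emph{achievable} and \emph{sharp}.
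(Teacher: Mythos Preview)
Your proposal is correct and follows essentially the same approach as the paper: both arguments rest on the Fourier multiplier identity $\widehat{\mathscr A_s u} = |B_s|^{-1}\widehat{\ONE_{B_s}}\,\widehat{\Acal u}$, exploit the Lebesgue-null zero set of $J_{n/2}$ for part~(1), construct the counterexample in part~(2) as the inverse Fourier transform of a surface measure on a sphere of Bessel-zero radius, and treat part~(3) coefficient-wise via Fourier series with a single-frequency separating example. Your handling of the one-dimensional case in part~(1) is in fact slightly more careful than the paper's, which tacitly reduces to a scalar $A_1$; your quotient argument through $V/\ker A_1$ covers the general case cleanly.
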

\begin{remark}[Particularities in dimension one]
    The fact that, in dimension $n=1$ the equality of kernels~\eqref{eq:kernels} holds for all $1 \le p < \infty$,  follows directly from the fundamental theorem of calculus (see the proof of Proposition~\ref{prop:spherical_kernel}). The equality fails for $p =\infty$ as every non-constant, $[0,1)$-periodic map, with zero average on $[0,1)$, belongs to $\ker_{L^\infty} \mathscr D_1$ (here, $\mathscr D_1$ is the $1$-spherical gradient operator).  More generally, the fact that the Bessel function has the simple explicit form
    \[
    J_{1/2}(t) = \sqrt{\frac{2}{t\pi}} {\sin t} \quad \text{for all $t \ge 0$}\,,
    \]
    implies its zeroes are precisely all the positive integers. In particular, the kernel of the $s$-scale spherical gradient $\mathscr D_s$ contains non-constant functions provided that  $s$ is a positive integer. 
    \end{remark}

We do not know if the range of the assertion in (1) can be lifted all the way up to $p \le \frac{2n}{n-1}$ (cf. (2)). For the gradient operator (or any other injectively elliptic operator for that matter), the question is related to finding the sharp range of exponents $p = p(n) \in [1,\infty]$ where the convolution operator $u \star 1_{B_1}$
is injective. 
We conjecture that it is precisely $p \in [1, \frac{2n}{n-1}]$:
\begin{conjecture}
        Let $n\ge 2$. The convolution operator 
    \[  
        L^p(\R^n) \to L^p(\R^n)	: u \mapsto u \star 1_{B_1} 
    \]
    is one-to-one for all $1 \le p \le \frac{2n}{n-1}$.
\end{conjecture}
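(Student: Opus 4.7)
The plan is to attack the conjecture via Fourier analysis, leveraging the Bessel representation $\widehat{1_{B_1}}(\xi) = |\xi|^{-n/2} J_{n/2}(2\pi|\xi|)$ recalled in the excerpt. Since $u \in L^p(\R^n) \subset \mathscr S'(\R^n)$ for any $p \le \frac{2n}{n-1}$, the hypothesis $u \star 1_{B_1} = 0$ Fourier-transforms to $\hat u(\xi)\,\widehat{1_{B_1}}(\xi) = 0$ in $\mathscr S'(\R^n)$. The multiplier $\widehat{1_{B_1}}$ is real-analytic on $\R^n \setminus \{0\}$ and vanishes precisely on the disjoint union of isolated concentric spheres $Z = \bigcup_{k \ge 1}\{|\xi| = r_k\}$, with $\{2\pi r_k\}$ the positive zeros of $J_{n/2}$. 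In the sub-range $1 \le p \le 2$, Hausdorff--Young yields $\hat u \in L^{p'}(\R^n)$, so the product identity is a pointwise a.e.\ equality between honest functions; since $Z$ has Lebesgue measure zero, $\hat u = 0$ a.e.\ and $u = 0$. This settles the easy sub-range at once.

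For $2 < p \le \frac{2n}{n-1}$, $\hat u$ is only a tempered distribution. Real-analyticity of $\widehat{1_{B_1}}$ away from the origin and the product identity force $\supp \hat u \subset Z$; since the spheres in $Z$ are isolated, a smooth radial partition of unity $(\chi_k)_k$ with disjoint annular supports decomposes $\hat u = \sum_k \mu_k$, where $\mu_k \coloneqq \chi_k \hat u$ satisfies $\supp \mu_k \subset \{|\xi| = r_k\}$. The heart of the argument is then the following \emph{single-sphere obstruction}: for every $r > 0$, any nonzero $\mu \in \mathscr S'(\R^n)$ with $\supp \mu \subset \{|\xi|=r\}$ has $\check\mu \notin L^p(\R^n)$ whenever $p \le \frac{2n}{n-1}$.

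To establish this obstruction, I would rescale to $r = 1$ and invoke the structure theorem for distributions supported on a smooth submanifold to write $\mu = \sum_{j=0}^{N} \partial_r^{j} \nu_j$ with $\nu_j \in \mathscr D'(\Sbb^{n-1})$ and some finite order $N$. Stationary phase applied to the inverse Fourier transform then yields the large-$|x|$ asymptotics
\[
\check\mu(x) = |x|^{N - \frac{n-1}{2}}\bigl(a_+(\omega)\,\ee^{2\pi\ii|x|} + a_-(\omega)\,\ee^{-2\pi\ii|x|}\bigr) + \text{lower order},\qquad \omega = x/|x|,
\]
with principal coefficients $a_\pm$ entirely determined by $\nu_N$. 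Membership $\check\mu \in L^p(\R^n)$ forces $p > \frac{2n}{n-1-2N}$, which already fails at $N = 0$ for $p \le \frac{2n}{n-1}$ and is even worse for $N \ge 1$; a downward induction on $N$ (peeling off the leading layer once $\nu_N$ is shown to vanish) gives every $\nu_j = 0$, hence $\mu = 0$. Summing back over $k$ yields $\hat u = 0$ and $u = 0$.

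The main obstacle is making the stationary-phase expansion fully rigorous when the tangential data $\nu_j$ are only distributions on $\Sbb^{n-1}$ rather than smooth densities: in the smooth case the asymptotics above are classical, but for distributional data one must either regularize by mollification along $\Sbb^{n-1}$ and pass to a limit, or bypass stationary phase by invoking the Stein--Tomas endpoint restriction inequality $L^2(\Sbb^{n-1}) \to L^{2(n+1)/(n-1)}(\R^n)$ together with the trivial $L^\infty(\Sbb^{n-1}) \to L^q(\R^n)$ decay bound valid for $q > \frac{2n}{n-1}$. Sharpness of the conjectured range is moreover consistent with the excerpt's observation that $\widehat{\sigma_{r_1}^{n-1}}$, for $r_1$ the first zero-radius of $\widehat{1_{B_1}}$, lies in $L^p(\R^n)$ precisely for $p > \frac{2n}{n-1}$ and generates a nontrivial $L^p$ element of the convolution kernel there.
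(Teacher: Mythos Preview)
The paper states this as an open \emph{conjecture} and offers no proof; there is nothing to compare your attempt against. Your outline is, however, a viable attack, and with two refinements it can be made into a complete argument. The case $1\le p\le 2$ and the reduction to single spheres (via $\supp\hat u\subset Z$ and the radial partition of unity, noting that $\check\mu_k=\check\chi_k\star u\in L^p$ since $\check\chi_k\in\mathscr S$) are both correct.

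The first refinement: your induction on the transversal order $N$ is unnecessary, because $N=0$ is forced. The positive zeros of $J_{n/2}$ are simple, so near each sphere $\{|\xi|=r_k\}$ one may write $\widehat{1_{B_1}}(\xi)=(|\xi|-r_k)h(\xi)$ with $h$ smooth and nonvanishing; dividing the identity $\widehat{1_{B_1}}\,\hat u=0$ by $h$ gives $(|\xi|-r_k)\mu_k=0$, and the distributional identity $(r-r_k)\partial_r^{\,j}\delta(r-r_k)=-j\,\partial_r^{\,j-1}\delta(r-r_k)$ then kills every layer with $j\ge 1$. Thus $\mu_k=f_k\,d\sigma_{r_k}$ for some $f_k\in\mathscr D'(\Sbb^{n-1})$, and $\check\mu_k\in C^\infty\cap L^p$ is a genuine solution of $(\Delta+4\pi^2 r_k^2)\check\mu_k=0$ on all of $\R^n$.

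The second refinement addresses your acknowledged obstacle. Neither of your proposed workarounds is sound as stated: tangential mollification of $f_k$ does not preserve $\check\mu_k\in L^p$, and Stein--Tomas gives an \emph{upper} bound on $\|\widehat{f\,d\sigma}\|_{L^q}$, not the lower bound you need. Instead, bypass stationary phase entirely by projecting onto a fixed spherical harmonic $Y_\ell$: the coefficient $w_\ell(r)=\int_{\Sbb^{n-1}}\check\mu_k(r\omega)\overline{Y_\ell(\omega)}\,d\sigma(\omega)$ solves the radial Bessel ODE and is regular at the origin, hence $w_\ell(r)=c_\ell\,r^{-(n-2)/2}J_{\ell+(n-2)/2}(2\pi r_k r)$. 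H\"older's inequality gives $\int_0^\infty r^{n-1}|w_\ell(r)|^p\,dr\lesssim_\ell\|\check\mu_k\|_{L^p}^p<\infty$, while the classical asymptotic $|J_\nu(t)|\sim(2/\pi t)^{1/2}|\cos(t-\nu\pi/2-\pi/4)|$ shows that for $c_\ell\neq 0$ this integral diverges precisely when $p\le\frac{2n}{n-1}$ (at the endpoint the divergence is logarithmic). Hence $c_\ell=0$ for every $\ell$, so $\check\mu_k=0$, and summing over $k$ gives $u=0$.
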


One may wonder if the equivalence of kernels conveys the $L^p$-equivalence of the spherical operator and its local counterpart. Unfortunately, this is not the case because $J_{\frac n2}$ has infinitely many zeros and all $L^p$-multipliers must be uniformly bounded (see~\cite[Section 2.5.5]{Grafakos}). In particular, we get the following negative result:

\begin{corollary}[Failure of Korn's inequality]\label{cor:no_Korn} Let $1 \le p \le \infty$ and let $s > 0$. 
	There exists no constant $C > 0$ such that
	\[
		\|\Acal u\|_p \le C \|\mathscr A_s u\|_p 
	\]
	 for all $u \in \mathscr D(\R^n;V)$.
\end{corollary}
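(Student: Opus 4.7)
The plan is Fourier-analytic and exploits the convolution representation of $\mathscr A_s$ together with the zeros of $J_{n/2}$. By Corollary~\ref{lem:GG} (together with Fubini), $\mathscr A_s u = K_s * \Acal u$ for $u \in \mathscr D(\R^n;V)$, where $K_s \coloneqq (\omega_n s^n)^{-1}\mathbf 1_{B_s}$; the Bessel identity recalled above then gives the Fourier symbol
\[
    m_s(\xi) = \widehat K_s(\xi) = \frac{J_{n/2}(2\pi s|\xi|)}{\omega_n (s|\xi|)^{n/2}}\,,
\]
which vanishes on the spheres $\{|\xi| = t_k/(2\pi s)\}_{k \ge 1}$ indexed by the (infinitely many) positive zeros $t_k$ of $J_{n/2}$. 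The idea is to inject a Fourier mode at one such sphere and observe that the spherical operator kills the leading contribution.

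We may assume $\Acal$ is non-trivial (otherwise the corollary is vacuous), and so fix $\eta \in \Sbb^{n-1}$ with $\Abb(\eta) \neq 0$, a vector $w \in V$ with $\Abb(\eta)w \neq 0$, and a positive zero $t_0$ of $J_{n/2}$. Set $\xi_0 \coloneqq (t_0/2\pi s)\eta$, so that $m_s(\xi_0) = 0$. Given a nontrivial nonnegative $\phi \in \mathscr D(\R^n)$ and $R > 0$, consider the modulated bump
\[
    u_R(x) \coloneqq \phi(x/R)\, \sin(2\pi \xi_0 \cdot x)\, w \;\in\; \mathscr D(\R^n;V).
\]
The product rule yields
\[
    \Acal u_R(x) = 2\pi \cos(2\pi\xi_0 \cdot x)\,\phi(x/R)\,\Abb(\xi_0) w \;+\; R^{-1} \sin(2\pi\xi_0 \cdot x)\,\Abb(\nabla\phi(x/R))\,w,
\]
and a rescaling $x = Ry$ applied to the dominant first summand yields $\|\Acal u_R\|_p \ge c\, R^{n/p}$ for some $c = c(p, \phi, \Abb(\eta) w) > 0$ and $R$ large (with $R^{n/p}$ replaced by $1$ when $p = \infty$, choosing $x$ near a nondegenerate maximum of the product).

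The crux of the proof is the following cancellation in $\mathscr A_s u_R = K_s * \Acal u_R$. Since $K_s$ is real and even, $\int K_s(y)\cos(2\pi\xi_0 \cdot y)\,dy = m_s(\xi_0) = 0$ and $\int K_s(y)\sin(2\pi\xi_0\cdot y)\,dy = 0$. Expanding the angle-addition formulas in the convolution and freezing the slowly varying factor $\phi(\cdot/R)$ at $y = x$, all would-be main contributions integrate to zero against $K_s$; the remainder is pointwise bounded by $(s/R)\|\nabla\phi\|_\infty\|K_s\|_{L^1}$ and supported within an $s$-neighbourhood of $R\,\supp\phi$. Combining this with Young's inequality applied to the $R^{-1}$-error in $\Acal u_R$ gives $\|\mathscr A_s u_R\|_p \le C R^{n/p - 1}$ uniformly in $R$. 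The putative Korn-type inequality would then force $R \le 2C/c$ for all large $R$, a contradiction. The only genuinely delicate step is this cancellation: it is precisely the $L^p$-manifestation of the unboundedness of the inverse multiplier $1/m_s$ alluded to in the paragraph preceding the corollary, and the same argument carries over verbatim to any other scalar profile whose Fourier transform vanishes on a single sphere.
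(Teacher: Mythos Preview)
Your proof is correct and rests on the same mechanism as the paper's: the scalar multiplier $m_s(\xi)=\widehat{K_s}(\xi)$ relating $\mathscr A_s$ to $\Acal$ vanishes on the spheres determined by the zeros of $J_{n/2}$, so no reverse bound can hold. The paper dispatches the corollary in a single sentence preceding the statement, invoking the general fact (Grafakos) that every $L^p$-multiplier lies in $L^\infty$, whence $1/m_s$ cannot be one. You instead build the counterexample by hand via the modulated bumps $u_R$, verifying directly that $\|\Acal u_R\|_p/\|\mathscr A_s u_R\|_p\gtrsim R$. Your route is longer but more self-contained, and it makes explicit a point the paper's one-liner leaves to the reader: the putative inequality is only tested on functions of the form $\Acal u$, not on all of $L^p$, so one needs the range of $\Acal$ to contain functions with Fourier mass concentrated near the zero-sphere of $m_s$ --- precisely what your modulated bumps supply.

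One small terminological slip: when $\Acal$ is trivial the inequality $0\le C\cdot 0$ holds for every $C>0$, so the corollary is \emph{false} in that degenerate case rather than ``vacuous''. The paper, of course, has a nontrivial operator in mind throughout, so your reduction is harmless; just replace ``vacuous'' by ``as implicitly assumed''.
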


 To discuss the equivalence of kernels for radial operators, we will introduce the auxiliary probability function 
\begin{equation}\label{eq:multiplier}
    \mu_\rho(x) \coloneqq \int_0^\infty n \hat \rho(r) 1_{B_r}(x) \, \frac{dr}{r}\,, \qquad x \in \R^n\,.
\end{equation}
Casting the Bessel function representation of the Fourier transform of the unit ball into the Fourier transform of $\mu_\rho$ yields the following \emph{oscillatory integral} expression
\[
    \widehat{\mu_\rho}(\xi) =  \frac{1}{|\xi|^\frac n2}\int_0^\infty nr^{\frac{n}{2} -1} \hat \rho(r) J_{\frac n2}(2\pi r|\xi|) \,dr\,.
\]

This \emph{multiplier} is particularly important for the understanding of the nonlocal operators. Indeed, the discussion in Appendix~\ref{sect:multi} tells us that $\Afrak_\rho u = \mu_\rho \star \Acal u$ holds for all $u$ in the Schwartz space $\Scal(\R^n;V)$. In particular, 
\[
    \widehat{\Afrak_\rho u} = \widehat{\mu_\rho} \, \Abb \widehat u
\]
is the multiplier associated with $\Afrak_\rho$, in terms of $\Acal u$. The fact that $\mu_\rho$ is a probability function, reflects just another way to understand the $L^p$ boundedness of $\Afrak_\rho$. With these considerations in mind, it is thus natural to investigate when $\widehat{\mu_\rho}(\xi)$ is nonzero (pointwise) and therefore one-to-one as a multiplier for all $1 \le p \le 2$.
Let us begin by observing that, similarly to the spherical case, as a direct consequence of the Riemann--Lebesgue lemma, we get the following negative result (which follows  from~Corollary~\ref{cor:non_invertible}):

\begin{corollary}[Failure of Korn's inequality]\label{cor:no_Korn2} Let $1 \le p \le \infty$. 
	There exists no constant $C > 0$ such that 
	\[
		\|\Acal u\|_p \le C \|\Afrak_\rho u\|_p 
	\]
	 for all $u \in \mathscr D(\R^n;V)$.
\end{corollary}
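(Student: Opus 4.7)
The plan is to argue by contradiction using the multiplier representation $\widehat{\Afrak_\rho u}(\xi) = \widehat{\mu_\rho}(\xi) \cdot \Abb(\xi) \widehat u(\xi)$ established in Appendix~\ref{sect:multi}, combined with the fact that $\widehat{\mu_\rho}$ decays to zero at infinity by the Riemann--Lebesgue lemma (which applies since $\mu_\rho \in L^1(\R^n)$ as a probability function). If $\Acal$ is trivial the claim is vacuous, so assume otherwise and pick $\omega \in \Sbb^{n-1}$ and $v \in V$ with $\Abb(\omega)v \neq 0$; by homogeneity the whole ray $\{t\omega : t>0\}$ lies in $\mathscr V_\Acal$.

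First I would reduce to producing, for each large $k$, a test function $u_k \in \mathscr D(\R^n;V)$ satisfying
\[
\|\Acal u_k\|_p \ge c_0 > 0 \quad \text{and} \quad \|\Afrak_\rho u_k\|_p \le \eps_k,
\]
with $\eps_k \to 0$. This contradicts the existence of a constant $C$ with $\|\Acal u\|_p \le C\|\Afrak_\rho u\|_p$. To build $u_k$, choose a sequence $t_k \to \infty$ such that $|\widehat{\mu_\rho}(t_k\omega)| \to 0$ (existence guaranteed by Riemann--Lebesgue). Using the uniform continuity of $\widehat{\mu_\rho}$ (another consequence of $\mu_\rho \in L^1$), find radii $\delta_k \to 0^+$ such that $|\widehat{\mu_\rho}| \le 2|\widehat{\mu_\rho}(t_k\omega)|$ on $B(t_k\omega,\delta_k)$. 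Fix a real-valued Schwartz function $\psi$ whose Fourier transform is a radial bump supported in the unit ball, and set
\[
u_k(x) = t_k^{-1} \cos(2\pi t_k \omega \cdot x) \, \psi(\delta_k x) \, v,
\]
modulated and scaled so that $\widehat{u_k}$ concentrates in a $O(\delta_k)$-neighborhood of $\{\pm t_k \omega\}$. A standard cut-off then yields an approximation in $\mathscr D(\R^n;V)$.

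By a direct computation (Leibniz rule), the dominant component of $\Acal u_k$ is the oscillatory term $-2\pi \sin(2\pi t_k\omega\cdot x)\,\psi(\delta_k x)\,\Abb(\omega)v$, whose $L^p$ norm stays bounded below by a positive constant (depending only on $\psi$, $\Abb(\omega)v$, and the averaging value of $|\sin|^p$), while the error term is of order $\delta_k/t_k \to 0$. On the other hand, since $\widehat{\Afrak_\rho u_k}$ is supported where $|\widehat{\mu_\rho}| \le 2|\widehat{\mu_\rho}(t_k\omega)| \to 0$, the extended $L^p$-bound from Theorem~\ref{thm:Lp_bounds} (and, for the dual endpoints, Plancherel plus interpolation on the Fourier side) gives $\|\Afrak_\rho u_k\|_p \to 0$. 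The main obstacle is the endpoint cases $p\in\{1,\infty\}$, where Fourier multiplier theory is unavailable; I would handle these by direct kernel estimates using the representation $\Afrak_\rho u_k = \mu_\rho \star \Acal u_k$ together with stationary-phase/oscillatory-integral estimates for $\mu_\rho \star [\sin(2\pi t_k\omega\cdot x)\psi(\delta_k x)]$, whose decay is again governed by the smallness of $\widehat{\mu_\rho}$ near $t_k\omega$. This produces the desired contradiction and hence the failure of any Korn-type inequality.
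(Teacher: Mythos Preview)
Your strategy is sound and shares the paper's key insight (Riemann--Lebesgue decay of $\widehat{\mu_\rho}$), but the execution has two concrete problems. First, the appeal to Theorem~\ref{thm:Lp_bounds} is in the wrong direction: that theorem only gives $\|\Afrak_\rho u_k\|_p \le \|\Acal u_k\|_p$, which is useless here since you need $\|\Afrak_\rho u_k\|_p$ small while $\|\Acal u_k\|_p$ stays away from zero. Second, the dilation $\delta_k \to 0$ is unnecessary and breaks your normalization claim: with $u_k(x)=t_k^{-1}\cos(2\pi t_k\omega\cdot x)\psi(\delta_k x)v$ one has $\|\Acal u_k\|_p \sim \delta_k^{-n/p}\to\infty$, not $\ge c_0$. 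Since $\widehat{\mu_\rho}(\xi)\to 0$ uniformly as $|\xi|\to\infty$, you may simply take $\delta_k\equiv 1$; then $\widehat{u_k}$ is supported in a ball of fixed radius around $\pm t_k\omega$, $\|\Acal u_k\|_p$ stays bounded above and below, and the only remaining task is to show $\|\mu_\rho\star\Acal u_k\|_p\to 0$. For $p=2$ this is Plancherel; for general $p$ (including the endpoints) you do not need stationary phase: approximate $\mu_\rho$ in $L^1$ by some $\phi\in C_c^\infty$, use that $\phi\star\Acal u_k\to 0$ uniformly on its fixed compact support (pointwise Riemann--Lebesgue plus a uniform bound), and absorb the $L^1$-small remainder via Young's inequality.

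By contrast, the paper's proof is a one-liner via Corollary~\ref{cor:non_invertible}: since $\mu_\rho\in L^1$, its Fourier transform vanishes at infinity, so $(\widehat{\mu_\rho})^{-1}\notin L^\infty$; but every $L^p$ Fourier multiplier must be bounded, hence no estimate $\|f\|_p\le C\|\mu_\rho\star f\|_p$ can hold on $\mathscr D$, and in particular not on $f=\Acal u$. Your constructive approach is essentially the standard proof that unbounded symbols are not $L^p$ multipliers, specialized to the present setting; once the gaps above are patched it is correct, just longer than necessary.
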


This tells us that $\Afrak_\rho$ and $\Acal$ are not comparable in the $L^p$-norm. It is worth to mention that Poincar\'e inequalities hold in $L^p$ under additional assumptions of the weight function. For a full discussion on this topic, we refer the reader to~\cite[Section~4]{bellido}. In spite of the failure of a Korn-type inequality, we are able to show that their associated $L^p$-kernels coincide under suitable assumptions (see also~\cite[Lemma 2]{DuTian}  and the discussion below it):

 \begin{proposition}\label{prop:kernel} Let $\rho : \R^n \to [0,\infty]$ be a non-trivial radially symmetric summable function.
  \begin{enumerate}[(i)]  
     \item If $1 \le p \le 2$, then
     \[
     \ker_{L^p(\R^n)} \Afrak_\rho = \ker_{L^p(\R^n)} \Acal
     \]
      if and only if
     \[
        \widehat{\mu_\rho}(\xi)  \neq 0 \quad \text{for almost every $\xi \in \mathscr V_\Acal$\,.}
     \]
          \item Let $1 \le p \le \infty$, then
     \[
     \ker_{L^p(\Tbb^n)} \Afrak_\rho = \ker_{L^p(\Tbb^n)} \Acal
     \]
      if and only if 
     \[
      \widehat{\mu_\rho}(m)  \neq 0 \quad \text{for all $m \in \Zbb^n \cap \mathscr V_\Acal$\,.}
     \]
         \end{enumerate}
\end{proposition}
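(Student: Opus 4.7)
The plan is to recast everything through the Fourier-multiplier identity
\[
    \widehat{\Afrak_\rho u}(\xi) = \widehat{\mu_\rho}(\xi)\, \Abb(\xi)\, \widehat{u}(\xi),
\]
which comes from the convolution representation $\Afrak_\rho u = \mu_\rho \star \Acal u$ discussed in Appendix~\ref{sect:multi}. The identity holds on Schwartz functions and extends to tempered distributions because $\mu_\rho \in L^1(\R^n)$, so $\widehat{\mu_\rho}$ is a bounded continuous function. The inclusion $\ker_{L^p}\Acal \subset \ker_{L^p}\Afrak_\rho$ is always available from Theorem~\ref{thm:Lp_bounds}(1), so in both (i) and (ii) the content is the reverse inclusion.

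For the ``if'' direction in (i), assume $\widehat{\mu_\rho}$ is nonzero a.e.\ on $\mathscr V_\Acal$ and let $u \in L^p$ satisfy $\Afrak_\rho u = 0$. Since $1 \le p \le 2$, Hausdorff--Young gives $\widehat u \in L^{p'}(\R^n;V)$ as a genuine measurable function, so the identity above forces $\widehat{\mu_\rho}(\xi)\,\Abb(\xi)\,\widehat u(\xi) = 0$ for a.e.\ $\xi$. Off $\mathscr V_\Acal$ the factor $\Abb$ vanishes, while on $\mathscr V_\Acal$ the factor $\widehat{\mu_\rho}$ is nonzero a.e.; in either case $\Abb\,\widehat u = 0$ a.e., hence $\Acal u = 0$.

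For the ``only if'' direction in (i), I argue contrapositively. Suppose $F \coloneqq \{\widehat{\mu_\rho} = 0\} \cap \mathscr V_\Acal$ has positive Lebesgue measure. Choose a Lebesgue density point $\xi_0 \in F$; since $\Abb$ is polynomial and $\xi_0 \in \mathscr V_\Acal$, there exist $v \in V$ and a radius $r > 0$ with $\Abb(\xi)v \neq 0$ on $B(\xi_0, r)$. Let $E = F \cap B(\xi_0, r)$, so $|E| > 0$. Then $u \coloneqq \mathcal F^{-1}(v\,\mathbf 1_E)$ lies in $L^2(\R^n;V)$, and $\widehat{\Afrak_\rho u} = \widehat{\mu_\rho}\,\Abb v\,\mathbf 1_E \equiv 0$ because the multiplier vanishes on $E$, while $\Abb\,\widehat u = \Abb v\,\mathbf 1_E$ is nonzero on the positive-measure set $E$, so $\Acal u \neq 0$. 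This supplies the required witness at $p=2$; for $1 \le p < 2$ I would either exploit an open subset of $F$ when available, or convolve $v\,\mathbf 1_E$ with a small-scale Schwartz bump and localize in physical space in order to secure $L^p$-integrability while keeping $\widehat u$ essentially concentrated on the zero set of $\widehat{\mu_\rho}$.

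Part (ii) on the torus is cleaner because the Fourier transform is discrete: the identity reads $\widehat{\Afrak_\rho u}(m) = \widehat{\mu_\rho}(m)\, \Abb(m)\, \widehat u(m)$ for every $m \in \Zbb^n$, and the ``if'' direction is the same mode-by-mode analysis. For the converse, given $m_0 \in \Zbb^n \cap \mathscr V_\Acal$ with $\widehat{\mu_\rho}(m_0) = 0$ and $v \in V$ with $\Abb(m_0)v \neq 0$, the single exponential $u(x) = v\,e^{2\pi i m_0 \cdot x}$ lies in $L^\infty(\Tbb^n) \subset L^p(\Tbb^n)$, is killed by $\Afrak_\rho$, and satisfies $\widehat{\Acal u}(m_0) = \Abb(m_0) v \neq 0$. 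The principal obstacle in the whole argument is the construction of an $L^p$-witness for $p < 2$ in part (i) when $\{\widehat{\mu_\rho} = 0\}\cap\mathscr V_\Acal$ has positive measure but empty interior (a fat-Cantor-type configuration), where the $L^2$ witness built from $\mathbf 1_E$ fails to be in $L^p$ and genuine harmonic-analytic care is required.
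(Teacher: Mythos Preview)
Your approach matches the paper's: both reduce to the Fourier-multiplier identity $\widehat{\Afrak_\rho u} = \widehat{\mu_\rho}\,\Abb\,\widehat u$ and argue pointwise (in $\xi$ on $\R^n$, mode-by-mode on $\Tbb^n$). The ``if'' direction of (i) and all of (ii) are complete and essentially identical to the paper's treatment; the paper's proof of (ii) is in fact briefer than yours, simply referring back to the analogous computation for the spherical operator in Proposition~\ref{prop:spherical_kernel}(3).

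Where you diverge is the ``only if'' direction of (i). You build an explicit $L^2$ witness $u=\mathcal F^{-1}(v\,\mathbf 1_E)$ and then honestly flag the obstacle of producing an $L^p$ witness for $p<2$ when $\{\widehat{\mu_\rho}=0\}\cap\mathscr V_\Acal$ has positive measure but empty interior. The paper does not construct any witness at all: it asserts the full equivalence in a single sentence, citing the multiplier identity, the continuity of $\widehat{\mu_\rho}$, and Fourier inversion for tempered distributions, with no further detail. So you are not missing an argument that the paper supplies --- your write-up is strictly more explicit on this point. Your caution about $p<2$ is well founded: at $p=1$, any candidate $u\in L^1$ has continuous $\widehat u$, and in the injectively elliptic case the condition $\widehat{\mu_\rho}\,\Abb\,\widehat u=0$ forces $\widehat u$ to vanish on the dense open complement of the zero set and hence everywhere, so no $L^1$ witness of the form you seek can exist. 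The paper's terse proof does not resolve this issue either.
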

Notice that (i) or (ii) hold whenever $\widehat{\mu_\rho}$ is strictly positive. In general, positivity might be too restrictive to guarantee the injective character of the multiplier. However, this condition is still interesting given that there exist well-known mild conditions that ensure its positivity:

\begin{example} [{cf.~\cite[Theorem~5.1]{Cho}}] Let $\rho : \R^n \to [0,\infty]$ be a non-trivial radial function satisfying the following properties
   \begin{itemize}
    	\item $\rho$ is integrable,
        \item $\hat \rho$ is piecewise continuous,
        \item $r \mapsto r^\frac{n-3}{2} \hat \rho(r)$ is monotone decreasing and
        \[
        \lim_{r \to \infty} r^\frac{n-3}{2}\hat \rho(r) = 0.
        \]
   \end{itemize}
            Then, $\widehat \mu_\rho$ is strictly positive. In particular, $\widehat \mu_\rho$ is strictly positive for all radial functions of the form
  \[
  	\rho(x) =\frac{\chi_{B_1}(x)}{|x|^{n - s}}, \qquad s \in (0,1)\,.
  \]
  \end{example}
  
  \begin{remark}[On similar positivity conditions]
	The authors in~\cite{bellido} (see page 5, Example 2.1 and Section 4 in that source) have furnished similar (less general) conditions that ensure the positivity of $\widehat {\mu_\rho}$. The trade off in those conditions is that they are robust enough to ensure that the inverted multiplier $(\widehat {\mu_\rho})^{-1}$ is above a small radial power. This, in turn,  allow the authors to establish Poincar\'e inequalities for $u$ in terms of the nonlocal radial gradient $\mathfrak D_\rho$. It is worth to mention that there exist  profiles $\hat \rho$ violating this type of conditions and still satisfying the positivity of $\widehat{\mu_\rho}$. This is shown by the following example:
\end{remark}
  
 \begin{example}[Gaussian modifications]
    Let $\sigma>0$ and consider a normally distributed random variable with expected value $0$ and variance $\sigma$, that is, a Gaussian of the form
    \[
        G_\sigma(t) ={\frac {1}{\sigma {\sqrt {2\pi }}}}\exp \left(-{\frac {1}{2}}{\frac {|t|^{2}}{\sigma ^{2}}}\right)\,, \qquad t \in \R\,.
    \]
Now, define the radially symmetric probability function
    \[
        \rho_\sigma(x) \coloneqq |x|^2 G_\sigma(|x|)\,, \qquad x \in \R^n\,.
    \]
By the properties of Gaussians and exploiting the alternative expression from~\eqref{lem:Psi} in the Appendix, it follows that
\[
   \widehat{\mu_{\rho_\sigma}}(\xi) =c  G_{\frac 1\sigma}(|\xi|) > 0\,, \qquad  \xi \in \R^n\,.
\]	
This shows that $\rho_\sigma$ satisfies the algebraic conditions contained in Proposition~\ref{prop:kernel}.(i)-(ii).  In particular, 
\[
	\ker_{L^p(\R^n)} \Afrak_{\rho_\sigma} = \ker_{L^p(\R^n)} \Acal\,.
\]
Notice, however, that $\rho_\sigma$ is not above any radial power and therefore no Poincar\'e inequality as the ones established in~\cite{bellido} hold for these weight. 
\end{example}

\subsection{Invariance of quasiconvexity} Let $A = \{\Acal,\mathscr A_s,\Afrak_\rho\}$. We write
$\mathscr D_\sharp(\Tbb^n;X) \subset \mathscr D(\Tbb^n;X)$ to denote the space of smooth maps $u : \R^n \to X$ that are $Q\coloneqq [0,1)^n$-periodic and have zero mean, that is, 
\[
	\int_Q u(y) \, dy  = 0\,.
\]
Let us briefly recall the definitions of quasiconvexity introduced in~\cite{dacorogna2006weak} and further developed in~\cite{FM99}: 

\begin{definition}[Potential quasiconvexity] A continuous integrand $h : V \to\R$ is called \emph{$A$-quasiconvex} if 
\[
	\forall v \in V, \qquad h(v) \le \int_Q h(u(x) + v) \, dx 
\]
for all mean zero $u \in \mathscr D_\sharp(\Tbb^n;V)$ satisfying $Au = 0$.
\end{definition}
Naturally, this concept is tailored to understand the variational properties of sequences of the form $\{v_j\}_j \subset \ker A$. It can be see that, for all constant-rank operators and under standard $p$-growth of the integrand, the concept of potential $A$-quasiconvexity is equivalent with the lower semicontinuity of 
\[
	u \mapsto \int h(u(x)) \, dx, \qquad Au = 0,
\]
with respect to weak forms of convergence in $L^p$. The following equivalences are a direct consequence of the results discussed in the previous section:

\begin{proposition}\label{thm:potential} Let $h: V \to \R$ be continuous integrand and let $s > 0$ be a positive number satisfying 
\[
	J_\frac{n}2(2\pi s|m|) \neq 0 \quad \text{for all $m \in \Zbb^n \cap \Vcal_\Acal$.}
\]
Then $h$ is $\Acal$-quasiconvex if and only if $h$ is $\mathscr A_s$-quasiconvex. 
\end{proposition}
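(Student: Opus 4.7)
The plan is to reduce the equivalence of the two quasiconvexity notions to the equality of their admissible test classes, namely
\[
\{u \in \mathscr D_\sharp(\Tbb^n;V) : \Acal u = 0\} = \{u \in \mathscr D_\sharp(\Tbb^n;V) : \mathscr A_s u = 0\}.
\]
Once this identity is in hand, the conclusion is immediate: both notions impose the \emph{same} inequality $h(v) \le \int_Q h(u+v)\,dx$ over the \emph{same} family of admissible periodic test maps $u$, so they coincide verbatim as conditions on the integrand $h$.

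To establish the identity, I would argue as follows. The inclusion ``$\subset$'' is automatic: if $u \in \mathscr D_\sharp(\Tbb^n;V)$ satisfies $\Acal u \equiv 0$, then by Corollary~\ref{lem:GG} (or, equivalently, by the $L^p$ estimate of Theorem~\ref{thm:spherical_localization}.(2), applied in the periodic setting) the smooth periodic function $\mathscr A_s u$ vanishes identically. For the reverse inclusion I would invoke Proposition~\ref{prop:spherical_kernel}.(3): the hypothesis
\[
J_\frac{n}{2}(2\pi s|m|) \neq 0 \quad \text{for all } m \in \Zbb^n \cap \mathscr V_\Acal
\]
is precisely the algebraic condition characterizing $\ker_{L^p(\Tbb^n)} \mathscr A_s = \ker_{L^p(\Tbb^n)} \Acal$. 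Since $\mathscr D_\sharp(\Tbb^n;V) \subset L^p(\Tbb^n;V)$ for every $p$, and both $\Acal u$ and $\mathscr A_s u$ are smooth periodic functions whenever $u$ is, the $L^p$ kernel identity transfers to a pointwise equality on $\Tbb^n$, placing every smooth $u$ with $\mathscr A_s u = 0$ inside $\ker \Acal$.

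The main analytic obstacle has already been overcome in Proposition~\ref{prop:spherical_kernel}.(3), where the Bessel-type condition on $s$ is identified via a Fourier multiplier analysis on the torus. Here the remaining items to check are merely that the hypothesis on $s$ in the present statement matches verbatim that of Proposition~\ref{prop:spherical_kernel}.(3), and that the restriction from $L^p$ to the smooth subclass $\mathscr D_\sharp$ preserves the kernel identity; both are immediate. No further analytic work is required beyond a direct comparison of the definitions of $\Acal$- and $\mathscr A_s$-quasiconvexity.
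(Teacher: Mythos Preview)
Your proposal is correct and matches the paper's own treatment: the paper simply declares Proposition~\ref{thm:potential} to be ``a direct consequence of the results discussed in the previous section,'' meaning precisely the kernel identity of Proposition~\ref{prop:spherical_kernel}.(3), and you have spelled out exactly this reduction.
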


\begin{proposition}\label{thm:potential2}Let $h: V \to \R$ be continuous integrand and let $\rho : \R^n \to [0,\infty)$ be a nontrivial radially symmetric integrable function satisfying
\[
	\int_0^\infty r^{\frac{n-2}{2}} \hat \rho(r) J_{\frac n2}(2\pi r|m|) \,dr\, \neq 0 \quad \text{for all $m \in \Zbb^n \cap \Vcal_\Acal$.}
\]
Then $h$ is $\Acal$-quasiconvex if and only if $h$ is $\mathfrak A_\rho$-quasiconvex. 
\end{proposition}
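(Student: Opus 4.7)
The plan is to deduce the equivalence directly from the kernel identity on the torus provided by Proposition~\ref{prop:kernel}(ii), by showing that the two quasiconvexity conditions are tested against the very same class of admissible periodic test functions. First I would record that the Bessel-integral hypothesis in the statement is nothing more than a rewriting of the condition $\widehat{\mu_\rho}(m) \neq 0$ for all $m \in \Zbb^n \cap \Vcal_\Acal$. Indeed, from the explicit oscillatory-integral formula
\[
\widehat{\mu_\rho}(\xi) = \frac{1}{|\xi|^{n/2}} \int_0^\infty n\, r^{(n-2)/2}\, \hat\rho(r)\, J_{n/2}(2\pi r|\xi|) \, dr,
\]
displayed in the discussion preceding Proposition~\ref{prop:kernel}, the constant factor $n/|\xi|^{n/2}$ is harmless and so the vanishing of $\widehat{\mu_\rho}(m)$ is equivalent to the vanishing of the integral appearing in the hypothesis.

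Next I would invoke Proposition~\ref{prop:kernel}(ii), specialized to $p = 2$ (any fixed $p \in [1,\infty]$ would do), to conclude from the nonvanishing above that
\[
\ker_{L^2(\Tbb^n)} \Afrak_\rho \;=\; \ker_{L^2(\Tbb^n)} \Acal.
\]
Since $\mathscr D_\sharp(\Tbb^n;V) \subset L^2(\Tbb^n;V)$, intersecting both sides of this identity with $\mathscr D_\sharp(\Tbb^n;V)$ yields
\[
\{u \in \mathscr D_\sharp(\Tbb^n;V) : \Afrak_\rho u = 0\} \;=\; \{u \in \mathscr D_\sharp(\Tbb^n;V) : \Acal u = 0\},
\]
so the class of smooth mean-zero periodic test functions admissible in the $\Acal$-quasiconvexity inequality coincides exactly with the class admissible in the $\Afrak_\rho$-quasiconvexity inequality.

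With this identification in hand, the defining inequality
\[
h(v) \;\le\; \int_Q h(u(x) + v)\, dx, \qquad v \in V,
\]
holds for every $u \in \mathscr D_\sharp(\Tbb^n;V)$ with $\Acal u = 0$ if and only if it holds for every $u \in \mathscr D_\sharp(\Tbb^n;V)$ with $\Afrak_\rho u = 0$. This is exactly the claimed equivalence of $\Acal$-quasiconvexity and $\Afrak_\rho$-quasiconvexity of $h$.

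The only point requiring a brief check is the passage from the $L^p$-kernel identity of Proposition~\ref{prop:kernel}(ii) to its smooth periodic sub-class, but this is immediate since $\mathscr D_\sharp(\Tbb^n;V) \subset L^p(\Tbb^n;V)$ and both $\Acal u$ and $\Afrak_\rho u$ depend on $u$ only through its distributional action; no density or regularization argument is needed, and there is no genuine obstacle beyond matching the hypotheses.
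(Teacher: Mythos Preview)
Your proposal is correct and follows essentially the same approach as the paper: the paper states that Proposition~\ref{thm:potential2} is a ``direct consequence of the results discussed in the previous section,'' meaning precisely Proposition~\ref{prop:kernel}(ii), and your argument spells this out exactly --- identify the Bessel-integral hypothesis with the nonvanishing of $\widehat{\mu_\rho}$ on the relevant lattice points, apply the torus kernel identity, and conclude that the test classes in the two quasiconvexity definitions coincide.
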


\section{Spherical surface kernels}\label{sec:spherical} 
In this section, we discuss localizations concerning spherical kernels. As we shall see below, working with these lower-dimensional measure kernels arises naturally from the Gauss-Green theorem. The analysis of these kernels will be fundamental in establishing similar estimates and localizations for the radial kernels discussed in the introduction.   We shall from now on consider  the uniformly distributed spherical surface measure 
\[
m_s = \frac{ \Hcal^{n-1} \mres \partial B_s}{|B_s|}, \qquad s > 0.
\]
To it, we associate a \emph{surface} kernel $K_s :\R^n \to \mathrm{Hom}(V,W)$ by setting
\[
	K_s(d\omega) \coloneqq \Omega_\Abb(\omega) \, m_s(d\omega),
\]
where $\Omega_\Abb$ is the zero-homogeneous angular part of the symbol $\Abb$ map, i.e.,  
\[
	\Omega_\Abb(x) \coloneqq \Abb\left(\frac{x}{|x|}\right), \qquad x \in \R^n - \{0\}.
\]
For each $s>0$, $K_s$ defines a finite $\mathrm{Hom}(V,W)$-valued measure on $\R^n$ and its therefore a tensor-valued compactly supported distribution. When applied on a test map $\varphi \in \mathscr D(\R^n;V)$, the Gauss--Green theorem yields the identity
\begin{align*}
	K_s \star \varphi(x)  & =  \frac 1{\omega_n s^n} \int_{\partial B_s}  \Omega_\Abb(\omega)\varphi(x + \omega) \, d\mathcal S(\omega) \\
	&  = \frac{1}{\omega_ns^n} \int_{\partial B_s(x)} \Abb(\nu(\omega))\varphi(\omega) \, dS(\omega)\\
	& = \frac{\Acal \varphi(B_s(x))}{\omega_n s^n} = \frac{\chi_{B_s}}{|B_s|} \star \Acal \varphi .
\end{align*}  
On the other hand, the cancellation property 
\begin{equation}\label{eq:cancel}
	\int_{S^{n-1}} \Abb(\omega) \,dS(\omega) = 0 
\end{equation}
conveys the identity 
\begin{align*}
	\mathscr A_s \varphi &= n\fint_{\partial B_s}  \Omega_\Abb(\omega)[\delta_\omega \varphi(x)] \, dS(\omega)\\
	& = K_s \star \varphi = \frac{\chi_{B_s}}{|B_s|} \star \Acal \varphi\,.
\end{align*}
These identities have two important implications. The first one is that the operator $\mathscr A_s$ can be uniquely and continuously extended to the space of distributions $\mathscr D'(\R^n;V)$. In fact
\[
	\mathscr A_s T = K_s \star T, \qquad K_s \star T(\varphi) \coloneqq T( (\tilde K_s)^t \star \varphi)\,,
\]
where $(\tilde\frarg)$ denotes the reflection operation $x \mapsto -x$ so that $(\tilde K_s)^t(h) = \Omega_{\Abb^*} \, dm_s$ and 
\[
	\mathscr A_s T(\varphi) = T(\mathscr A^*_s \varphi)\,.
\]
Notice that a direct consequence of this identity is the integration by parts formula
\[
	\int \mathscr A_s \varphi \cdot \psi = \int (K_s \star \varphi) \cdot \psi = \int \varphi \cdot (\tilde K_s^t \star \psi) = \int \mathscr \varphi \cdot \mathscr A^*_s \psi
\]
for all test functions $\varphi \in \mathscr D(\R^n;V)$ and $\psi \in \mathscr D(\R^n;W)$. The second one is that  
\[
	\|\mathscr A_s \varphi\|_p \le \|\Acal \varphi\|_p \quad \text{for all $1 \le p \le \infty$.}
\]
Indeed, every probability measure is an $L^1$ Fourier multiplier with norm one (and hence also an $L^p$ multiplier for all $p$). A key point that will be used later is that the estimate on the right-hand side is independent of the parameter $s$. 
If moreover $\mathcal AT$ can be represented by a Radon measure, then  
$K_s \star T \in L^\infty_\loc(\R^n;W)$ since
\[
	\|K_s \star T\|_\infty \le \frac{ |\Acal T|(\bar B_r(x))}{\omega_n s^n} \,.
\]
More precisely, since $\mathscr A_s T \in L^1_\loc$, standard mollifier theory yields
\begin{equation*}
	\lim_{\eps \to 0^+}  (\mathscr A_s T)_\eps(x) = \mathscr A_sT(x) \quad \text{for a.e. $x \in \R^n$\,,}
\end{equation*}
where $(\frarg)_\eps$ denotes mollification with a standard mollifier at scale $\eps$. The associative property of convolutions and the fact that convolutions commute with distributional derivatives then gives
\begin{equation}\label{eq:cont}
\begin{split}
		\frac 1{|B_s|} \lim_{\eps \to 0^+} (\Acal  T)_\eps(B_s(x)) & =  
		\lim_{\eps \to 0^+}  (\mathscr A_s T)_\eps(x) \\
		&  = K_s \star T(x)\quad \text{for a.e. $x \in \R^n$\,.}
\end{split}
\end{equation}
Lemma~\ref{lem:Leb} in the Appendix implies that $|\Acal T|(\partial B_s(x)) = 0$ for almost every $x \in \R^n$ and therefore also the right-hand side above converges to $\Acal T(B_s(x))/|B_s|$ almost everywhere on $\R^n$ (see, e.g.,~\cite[Proposition~1.62]{APF}. In particular,
\begin{equation}\label{eq:rep}
	\mathcal AT \in \mathcal M \quad \Longrightarrow  \quad \mathscr A_s T \equiv \frac{\Acal T(B_s(\frarg))}{|B_s|}  \; \text{as maps in $L^\infty$}.
\end{equation}

\subsection*{Proof of Theorem~\ref{thm:spherical_localization}} From this discussion, we infer the following properties which validate the assertions contained in {Theorem}~\ref{thm:spherical_localization}:
\begin{enumerate}[(i)]
\item If $u \in \mathscr D(\R^n;V)$, then 
\[
	\mathscr A_s u = K_s \star u \stackrel{\mathscr D}\longrightarrow \Acal u \qquad \text{as $s \to 0^+$}.
\]
Indeed, by the properties of differentiation for convolutions, it suffices to observe the validity of the supremum norm estimate
\begin{align*}
	|\partial^\alpha (K_s \star u - \Acal u)(x)| & \le \fint_{B_s(x)} |\Acal \partial^\alpha u(y) - \Acal \partial^\alpha u(x)| \, dy \\
	& \le s\|D^ku\|_{L^\infty(B_s(x))}
\end{align*}
where $k = |\alpha| + 2$. This proves that $\mathscr A_s \varphi \to \Acal u$ in $C^\infty$. Since by definition 
\[
	\mathrm{spt}(\mathscr A_s \varphi) = \mathrm{spt}(\Acal \varphi \star 1_{B_s}) \subset \{x \in \R^n: \dist(x,\mathrm{spt}(\varphi)) \le s\}\,,
\]
we conclude that $\mathscr A_s \varphi \to \Acal u$ in $\mathscr D$.
\item If $T \in \mathscr D'(\R^n;V)$, then a by-product of the previous convergence is the distributional convergence
\[
	\mathscr A_s T \stackrel{\mathscr D'}\longrightarrow \Acal T \qquad \text{as $s \to 0^+$}.
\]
Indeed, observing that $-\Abb^t$ is the principal symbol associated with the formal adjoint $\Acal^*$ of $\Acal$, we deduce that
\[
	\mathscr A_s T = T (\tilde K_s^t \star  u) \longrightarrow T(\Acal^* u) = \Acal T(u).
\]
\item If $u \in \mathscr D(\R^n;V)$, then Jensen's inequality gives
\begin{equation}\label{eq:jensen1}
	F(K_s \star u) \le   F(\Acal u) \star \frac{\chi_{B_s}}{|B_s|} 
\end{equation}
for all convex maps $F: W \to [0,\infty]$. Then, Young's convolution inequality conveys the estimate 
\[
\|F(K_s \star u)\|_{L^1} \le \|F(\Acal u)\|_{L^1}.
\]
In particular, 
\[
	\|\mathscr A_s u\|_p = \|K_s \star u\|_{L^p} \le \|\Acal u\|_{L^p}  \quad \text{for all $1 \le p \le \infty$.}
\]
\item The lower semicontinuity of the extended $L^p$-norms under distributional convergence and  the previous estimate gives rise to the  extended estimate
\[
	\forall p \in [1,\infty], \quad \|K_s \star T\|_p \le \|\Acal T\|_p \quad \text{for all $T \in \mathscr D'(\R^n;V)$}.
\]
The key point is that the right-hand side does not depend on the parameter $s$.
In particular, again from lower semicontinuity and distributional convergence, we obtain
\[
	\limsup_{s \to 0^+} \|K_s \star T\|_p \le \|\Acal T\|_p \le \liminf_{s \to 0^+} \|K_s \star T\|_p\,.
\]
This means that the extended limit exists and, in fact,
\[
	\forall p \in [1,\infty], \quad \lim_{s \to 0^+} \|\mathscr A_s T\|_p = \|\mathcal A T\| \in [0,\infty]\,.
\]
\item If $\Acal T \in L^p$ for some $p \in [1,\infty]$, then~\eqref{eq:cont} and the fact that  
\[
|\Acal T|(\partial B_s(x)) = 0\quad \text{ for all $x \in \R^n$,}
\] 
imply 
\[
\mathscr A_s T \in C(\R^n;W) \quad \forall s > 0.
\]

\item For $1 < p < \infty$, the convergence of the $L^p$ norms implies that the distributional converge improves to the strong convergence
\[
	\mathscr A_s T  \stackrel{L^p}\longrightarrow \Acal T \quad \text{as $s \to 0^+$.}
\]
\item Lastly, if $\Acal T \in \Mcal$ and $g : W \to [0,\infty)$ is convex function with linear growth at infinity, then the expression~\eqref{eq:rep} leads to the estimate
\begin{align*}
	\int g (\mathscr A_s T(x)) \, dx & = \int g\left( \fint_{B_s} \,d \mathcal AT(y-x) \right) \, dx \\
	& \le \int \fint_{B_s} g(\mathcal A^a T(y-x)) \, dx  + \int \fint_{B_s} g^\infty(d\Acal^s T(y-x)) \\
	& \le \int g(\mathcal A^aT(x)) \, dx + \int g^\infty(d\Acal^s T(x)).
\end{align*}
Here, in passing to the first inequality we have made use of two facts: Firstly, the inequality $g(x + y) \le g(x) + g^\infty(y)$ for all $x,y\in \R^n$, which holds for all convex functions (see~\cite[Lemma~2.5]{KK_16}). And secondly, Jensen's inequality. The last inequality follows from Young's convolution inequality. From this, we deduce the upper bound
\[
	\limsup_{s \to 0^+} \int g(\mathscr A_s T) \, dx \le  I_g(\mathcal AT)
\] 
where 
\[
	I_g(\mu)  \coloneqq \int g(\mu^a) \, dx + \int g^\infty(\mu^s )\,.
\]
We are left to show the limit inferior has the same bound.  This, however, follows  from the fact that $\mathscr A_s T \toweakstar \mathcal AT$ in the weak sense of measures. Indeed, it is a standard fact the convexity of $g$ ensures that $I_g$ is sequentially lower semicontinuous with respect to the weal convergence of measures, and hence
\[
\liminf_{s \to 0^+} \int g(\mathscr A_s T) \, dx \ge I_g(\mathcal AT)\,.
\]
This shows that the extended limit
\[
\liminf_{s \to 0^+} \int g(\mathscr A_s T) \in [0,\infty]
\]
exists and coincides with $I_g(\mathcal AT)$. Of course, taking the area integrand $g(z) = \sqrt{1 + |z|^2}$, this is equivalent (cf.~\cite[Remark~4.2]{me}) to saying that  
\[
	\mathscr A_s T \, \mathscr L^n  \stackrel{\mathrm{area}}\longrightarrow \Acal T \quad \text{as $s \to 0^+$.}
\]
\end{enumerate}

\section{Radial kernels}\label{sec:radial}
Let $\rho(x) = \hat \rho(|x|)$ be a non-negative radial summable function and consider the kernel $\Kcal_\rho \in C^\infty(\R^n - \{0\};\mathrm{Hom}(V,W))$ defined by
\[
	\Kcal_\rho(z) \coloneqq n \, \Omega_\Abb\left(\frac{z}{|z|}\right)\frac{\rho(z)}{|z|}, \qquad z \in \R^n \setminus \{0\}.
\]
The mean-value zero property 
\[
	\int_{\partial B_s(0)} \Kcal_\rho(\omega) \, dS(\omega) = 0 \qquad \text{for all $s >0$} 
\]
ensures that the limiting convolution integral 
\begin{align*}
	\Afrak_\rho \varphi(x) 
	& \coloneqq \lim_{\delta \to 0} \int_{\R^n \setminus B_\delta(x)} \Kcal_\rho(y-x) [\varphi(y) - \varphi(x)] \, dy 
\end{align*}
is well-defined for all test maps $\varphi \in \mathscr D(\R^n;V)$ and all $x \in \R^n$, independently of the infinitesimal sequence $\delta \to 0^+$. Indeed, 
\begin{align*}
		\Afrak_\rho \varphi(x)  
		& = n \lim_{\delta \to 0}  \int \Omega_\Abb(h) \left[\frac{\varphi(x + h)}{|h|}\right] \, \rho(h) \, dy \\
		& = n \lim_{\delta \to 0}  \int_\delta^\infty \omega_n\hat \rho(r)r^{n-1} \left(n\fint_{\partial B_r} \Omega_\Abb(h) \left[\varphi(x + h)\right]\, dS(h) \right)\, dr\\
		& =  n \lim_{\delta \to 0}  \int_\delta^\infty \omega_n \hat \rho(r) r^{n-1}  \mathscr A_r\varphi(x) \, dr \\
		& = n \int_0^\infty \omega_n \hat \rho(r) r^{n-1}  \mathscr A_r\varphi(x) \, dr\,.
\end{align*}

\subsection*{The nonlocal operator on admissible distributions} As we have already discussed, when $\rho$ is not compactly supported, the nonlocal operator $\Afrak_\rho$ may not have an continuous extension to all $V$-valued distributions.

With these considerations in mind, we are now in the position to extend the radial nonlocal operator to all admissible distributions (cf. Definition~\ref{def:admissible}).  Recalling the convolution representation of spherical operators, we obtain the expression (cf. Eqn.~\ref{eq:multiplier})
\[
	\Afrak_\rho^* \psi  = \mu_\rho \star \Acal^* \psi =  \Acal^* (\mu_\rho \star \psi )\,, \qquad \psi \in \mathscr D(\R^n;W)\,.
\]
Notice, on the other hand, that the linear functional
\[
	\psi \mapsto   T(\Acal^* ( \psi \star \mu_\rho   ))
\]
is well defined and continuous on $\mathscr D(\R^n;W)$, provided that $T$ is an admissible distribution. In fact, this assignment is sequentially continuous on $T$ with respect to distributional convergence $T_j \stackrel{\mathscr D'}\to T$ provided that one of the admissibility conditions is satisfied along the sequence. Moreover, \[
T_\varphi (\Acal^* ( \psi \star \mu_\rho )) = T_{\Afrak_\rho \varphi} (\psi)
\]
for all $\varphi \in \mathscr D(\R^n;V)$. Here, $T_f$ denotes the distribution identified with a function $f$ through integration.
These two observations guarantee that the nonlocal operator $\Afrak_\rho$  
has a unique continuous extension (with respect to the convergence of distributions) for all admissible $T$. 
This justifies the following definition:

\begin{definition}[Radial operator on distributions]\label{def:radial_distributions} Let $T \in \mathscr D'(\R^n;V)$ be an admissible distribution. We define $\Afrak_\rho T$ as the distribution 
\[
	\psi \mapsto  T(\Afrak_\rho^* \psi)\,.
\] 
\end{definition}

We have the following analogue expression for the nonlocal radial operator on admissible distributions:
\begin{lemma}\label{lem:definition_radial} Let $\psi \in \mathscr D(\R^n;W)$ and let $T\in \mathscr D'(\R^n;V)$ be admissible in the sense of Definition~\ref{def:admissible}. Then, the map
	\[
		r \mapsto  \hat \rho(r) r^{n-1} \mathscr A_r T(\psi)\,, \qquad r > 0\,,
	\]
is measurable and absolutely integrable on $(0,\infty)$.
 
In particular,  the functional
\[
	\psi \mapsto \int_0^\infty \hat \rho(r) r^{n-1} \mathscr A_r T(\psi) \, dr\,
\]
defines a $W$-valued distribution and, in fact, 
\[
	R(T) \coloneqq n \int_0^\infty \hat \rho(r) r^{n-1} \mathscr A_r T(\psi) \, dr\,
\]
is well-defined as a Bochner integral over a Banach space of distributions. 
\end{lemma}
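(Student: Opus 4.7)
The plan is to dualize through $\mathscr A_r T(\psi) = T(\mathscr A_r^*\psi)$ and then exploit the convolution representation $\mathscr A_r^*\psi = (\chi_{B_r}/|B_r|) \star \Acal^*\psi$ established in Section~\ref{sec:spherical}. Measurability of $r \mapsto \mathscr A_r T(\psi)$ follows quickly: the map $r \mapsto \mathscr A_r^*\psi$ is continuous from any bounded interval into $\mathscr D(\R^n;W)$ (its support stays in a fixed neighbourhood of $\supp\psi$) and continuous from $(0,\infty)$ into every $L^p(\R^n;W)$, both by dominated convergence applied to the convolution representation. Composing with $T$---which is continuous on $\mathscr D$, or on the relevant $L^p$ or $\Mcal_b$ depending on the admissibility case---yields continuity, and hence Borel measurability, of $r \mapsto \mathscr A_r T(\psi)$.

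The crux of the argument is a uniform bound $|\mathscr A_r T(\psi)| \le M(T,\psi)$ over the relevant range of $r$, adapted to the admissibility hypothesis. Under (b) or (c), the extended $L^p$ estimate $\|\mathscr A_r^*\psi\|_{L^q} \le \|\Acal^*\psi\|_{L^q}$ from Theorem~\ref{thm:spherical_localization}.(2) gives $|\mathscr A_r T(\psi)| \le \|T\|_p\|\Acal^*\psi\|_{L^q}$, with the analogous estimate $|\mathscr A_r T(\psi)| \le \|\Acal T\|_p\|\psi\|_{L^q}$ when $\Acal T \in L^p$ or $\Mcal_b$. Under (a) or (a'), either $\supp T$ is compact or $r$ is confined to $\supp\hat\rho$; in either case the portion of $\mathscr A_r^*\psi$ that $T$ tests against lies in a fixed compact $K$. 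Combining the standard distributional seminorm estimate $|T(\phi)| \le C_T\|\phi\|_{C^k(K)}$ with $\|\partial^\alpha \mathscr A_r^*\psi\|_\infty \le \|\Acal^*\partial^\alpha\psi\|_\infty$ (obtained by differentiating through the convolution) then yields a uniform $C^k$-seminorm bound on $\psi$.

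The uniform bound, combined with the polar identity
\begin{align*}
    \int_0^\infty n\omega_n\,\hat\rho(r) r^{n-1}\,dr = \|\rho\|_{L^1(\R^n)} < \infty,
\end{align*}
produces absolute integrability of $r \mapsto \hat\rho(r) r^{n-1}\mathscr A_r T(\psi)$, and the resulting functional $\psi \mapsto R(T)(\psi)$ is continuous on $\mathscr D$ because $M(T,\psi)$ is itself a continuous seminorm of $\psi$. For the Bochner interpretation I would choose the Banach space $X$ case by case: $X = L^p(\R^n;W)$ or $\Mcal_b(\R^n;W)$ in (b)--(c) via the extended-norm framework, and in (a)--(a') the Banach space of $W$-valued distributions of order at most $k$ supported in a fixed compact $K$ (i.e.\ the topological dual of $C^k_c(K;W)$). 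In every case $r \mapsto \mathscr A_r T$ is continuous into $X$, which supplies separability of its image and strong measurability via Pettis' theorem, while the uniform $X$-norm bound yields Bochner integrability. The main obstacle I anticipate is the packaging in cases (a)--(a'): translating the distributional seminorm estimate into a bona fide Banach-space norm forces one to fix once and for all an order $k$ and a compact $K$ (both depending on $T$)---routine bookkeeping, but the least automatic step in the argument.
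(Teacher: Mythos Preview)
Your approach is essentially the same as the paper's: dualize via $\mathscr A_r T(\psi) = T(\mathscr A_r^*\psi)$, establish continuity (hence measurability) in $r$, obtain a uniform-in-$r$ bound case by case from the convolution representation $\mathscr A_r^*\psi = \chi_r \star \Acal^*\psi$, and integrate against the weight $n\omega_n\hat\rho(r)r^{n-1}$.

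One correction to your Bochner packaging: in case (b), where only $T \in L^p$ (or $\Mcal_b$) is assumed but nothing is known about $\Acal T$, the spherical gradients $\mathscr A_r T = \Acal(\chi_r \star T)$ are merely first-order distributions acting on an $L^p$ function and need not themselves lie in $L^p$ or $\Mcal_b$. Hence you cannot take $X = L^p(\R^n;W)$ there; the paper instead uses a negative-order Sobolev space ($W^{-1,p}$ for $1 < p \le \infty$, or $W^{-2,q}$ with $1 < q < \tfrac{n}{n-1}$ when $p=1$, via the embedding $\Mcal_b \hookrightarrow W^{-1,q}$). Your choice of $X = L^p$ is correct only in case (c), where $\|\mathscr A_r T\|_p \le \|\Acal T\|_p$ genuinely places $\mathscr A_r T$ in $L^p$. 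This is precisely the ``routine bookkeeping'' you flagged, but the space has to be chosen one order lower than you wrote.
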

\begin{proof}
	First, observe that  $\mathscr A_r T(\psi)$ belongs to $\R$  and coincides with $T(\mathscr A_r^* \psi)$ for every $r>0$. Let us write $f(r) \coloneqq \hat \rho(r) r^{n-1} \mathscr A_r T(\psi)$. Being the composition of continuous functions, $r \mapsto \mathscr A_r T(\psi)$ is continuous on $(0,\infty)$. Since $\hat\rho(r)$ is measurable, it follows that $f$ is  also measurable in $(0,\infty)$.  Next, we prove that  $f$ is summable on $(0,\infty)$. We divide the proof into the different assumptions under the definition of admissibility (cf. Definition~\ref{def:admissible})

	If~(b) holds, then $|T(\mathscr A_r^* \psi)| \le \|T\|_p  \|\Abb\|_{L^\infty(S^{n-1})}  \|D\psi\|_{L^\infty}$ for all $r>0$ and hence the $|f|$ is summable on $(0,\infty)$. This also says that $\Afrak_\rho T$ is a first-order distribution. From this, it follows that $R(T)$ defines a Bochner integral:
	\begin{itemize}
	\item over $W^{-1,p}$ for $1 < p \le \infty$, 
	\item over $W^{-2,q}$, with $1 < q < \frac{n}{n-1}$, when $p = 1$. (Here, we are using that Morrey's theorem implies the compact embedding $\Mcal_b \cembed W^{-1,q}$.)
	\end{itemize}

If the support of $T$ is compact (assumption (a)), then there exists a positive integer $\ell$ (depending solely on $T$) such that 
\[
\forall r > 0, \quad |T(\mathscr A_r^* \psi)| \le   \ell \sup_{0 \le |\alpha| \le \ell}  \|D^\alpha \Acal^* \psi \|_{L^\infty(\overline {B_\ell})}\,,
\]
and we reach the same conclusion for $|f|$. 
Here, we have used that $\mathscr A_r^* \psi = \Acal^* \psi \star \frac{1_{B_r}}{|B_r|}$ and the fact that convolution with a probability measure does not increase the $L^\infty$-norm. In this case $R(T)$ is defined as a Bochner integral over a certain negative Sobolev space depending on $\ell$ and $p$.

If else $\rho$ is compactly supported (assumption (a')), then there exists $r_\psi > 0$ such that $T(\mathscr A_r \psi) = \chi T(\mathscr A_r \psi)$, where $\chi$ is a cut-off of the ball $B_{r_\psi}$. Therefore, the analysis of this case reduces to the previous one. Notice that, the space of first-order distributions over a compact set is naturally endowed with a norm that endows it with a Banach space structure. Therefore, $R(T)$ is well-defined as a Bochner integral over this space. This covers the case when either (a) or (a') are is satisfied.

If Assumption~(c) holds, then $\|\mathscr A_r T\|_p \le \|\Acal T\|_p$ for every $r> 0$ and it is hence it is straightforward to verify that $|f|$ is integrable. Similarly to the first case, $R(T)$ is then well defined as a Bochner integral over $L^p$ ($p > 1$) or $W^{-1,q}$  with $1 < q < \frac n{n-1}$ (for $p = 1$). 

This finishes the proof.
\end{proof}

A direct consequence of this lemma is that $\Afrak_\rho$ is expressed as a superposition of spherical operators on admissible distributions. This, in turn, conveys an immediate proof of the extended $L^p$ bounds for the radial operator on such distributions:

\begin{corollary}\label{cor:superposition} Let $T\in \mathscr D'(\R^n;V)$ be an admissible distribution. 
Then
\[
	\Afrak_\rho T = n \int_0^\infty \omega_n \hat \rho(r) r^{n-1} \mathscr A_r T \, dr
\]
in the sense of distributions and 
\[
	\|\Afrak_\rho T\|_p \le \|\rho\|_{L^1} \|\Acal T\|_p \qquad \text{for all $1 \le p\le \infty$.}
\]
\end{corollary}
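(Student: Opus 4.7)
The plan is to leverage the pointwise superposition formula already derived for test functions at the beginning of Section~\ref{sec:radial}, and then transfer it to arbitrary admissible distributions by duality, using the Bochner integrability supplied by Lemma~\ref{lem:definition_radial}.

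\textbf{Step 1 (reduction to a dual identity on test functions).} For the adjoint operator $\Acal^{*}$, the same pointwise computation done at the start of Section~\ref{sec:radial} applied to $\psi \in \mathscr D(\R^n;W)$ in place of $\varphi$ yields
\[
\Afrak_\rho^{*}\psi(x) = n\int_0^\infty \omega_n\, \hat\rho(r)\, r^{n-1}\, \mathscr A_r^{*}\psi(x)\, dr, \qquad x \in \R^n,
\]
with absolute convergence of the integral (and of all derivatives in $x$) as $\mathscr A_r^{*}\psi = \Acal^{*}\psi \star \mathbf 1_{B_r}/|B_r|$ is uniformly bounded in $r$ by $\|\Acal^{*}\psi\|_{L^\infty}$.

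\textbf{Step 2 (superposition identity on admissible $T$).} Fix $\psi \in \mathscr D(\R^n;W)$ and an admissible $T \in \mathscr D'(\R^n;V)$. By Definition~\ref{def:radial_distributions},
\[
\Afrak_\rho T(\psi) = T\bigl(\Afrak_\rho^{*}\psi\bigr) = T\!\left(n\int_0^\infty \omega_n \hat\rho(r)\, r^{n-1}\, \mathscr A_r^{*}\psi\, dr\right).
\]
Lemma~\ref{lem:definition_radial} identifies $r \mapsto n\omega_n \hat\rho(r)\,r^{n-1}\,\mathscr A_r T$ with a Bochner-integrable map into a suitable Banach space of distributions on which the evaluation $S \mapsto S(\psi)$ is a bounded linear functional; equivalently, writing $f(r) \coloneqq n\omega_n\hat\rho(r)\,r^{n-1}\,T(\mathscr A_r^{*}\psi)$, the function $f$ is Lebesgue integrable on $(0,\infty)$. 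Since $T$ is linear and continuous on the Banach space hosting the Bochner integral of Step~1, we may commute $T$ with the integral to obtain
\[
\Afrak_\rho T(\psi) = \int_0^\infty f(r)\, dr = \left(n\int_0^\infty \omega_n \hat\rho(r)\, r^{n-1}\, \mathscr A_r T\, dr\right)\!(\psi),
\]
which is the asserted superposition identity $\Afrak_\rho T = n\int_0^\infty \omega_n \hat\rho(r)\, r^{n-1}\mathscr A_r T\, dr$ in $\mathscr D'(\R^n;W)$.

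\textbf{Step 3 ($L^p$ bound).} Using the superposition identity, the triangle inequality for the extended $L^p$-norm (which is lower semicontinuous under distributional convergence and hence dominates Bochner integrals against it), and the spherical estimate $\|\mathscr A_r T\|_p \le \|\Acal T\|_p$ from Theorem~\ref{thm:spherical_localization}, we obtain
\[
\|\Afrak_\rho T\|_p \le n\int_0^\infty \omega_n \hat\rho(r)\, r^{n-1}\, \|\mathscr A_r T\|_p\, dr \le \|\Acal T\|_p \cdot n\omega_n \int_0^\infty \hat\rho(r)\, r^{n-1}\, dr.
\]
Passing to polar coordinates in $\int_{\R^n}\rho(x)\,dx = n\omega_n\int_0^\infty \hat\rho(r)\,r^{n-1}\,dr = \|\rho\|_{L^1}$ gives the claim.

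\textbf{Anticipated obstacle.} The only non-routine point is Step~2: one must justify commuting the (possibly high-order) distribution $T$ with the $r$-integral. This is precisely the content of the Banach-space Bochner formulation in Lemma~\ref{lem:definition_radial} --- the choice of target space depends on the admissibility case, but in every case $T$ descends to a bounded functional there, so the commutation is legitimate. The $L^p$ estimate then falls out for free from the superposition and Theorem~\ref{thm:spherical_localization}.
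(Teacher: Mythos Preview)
Your proof is correct and follows essentially the same route as the paper: duality plus the Bochner integrability from Lemma~\ref{lem:definition_radial} to commute $T$ with the $r$-integral, followed by the spherical bound $\|\mathscr A_r T\|_p \le \|\Acal T\|_p$ from Theorem~\ref{thm:spherical_localization}. The only cosmetic difference is that the paper obtains the $L^p$ estimate by writing out the H\"older duality $|\mathscr A_r T(\psi)| \le \|\mathscr A_r T\|_p\|\psi\|_{L^q}$ explicitly and then taking the supremum over $\psi$, which is precisely what your abstract ``triangle inequality for the extended $L^p$-norm against Bochner integrals'' unpacks to.
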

\begin{proof}
Let $\psi \in \mathscr D(\R^n;W)$. Then, by the duality identity on spherical operators we get
\[
	R(T)(\psi) = \int_0^\infty n\omega_n \hat \rho(r) r^{n-1} T(\mathscr A_r^*\psi)  \, dr\,.
\]
Since $R(T)$ is a Bochner integral over a Banach space contained in the space of distributions, it follows that $T$ commutes with the integral and therefore
\[
	R(T)(\psi) = T\left( \int_0^\infty n \omega_n \hat \rho(r) r^{n-1} \mathscr A_r^*\psi   \, dr \right) = T(\Afrak_\rho^* \psi). 
\]
This demonstrates the distributional identity between $R(T)$ and $\Afrak_\rho T$. 

The extended $L^p$-bounds are then a direct consequence of the bounds spherical operators and H\"older's inequality  (which also holds for the extended norms):
\begin{align*}
	R(T)(\psi) & \le \int_0^\infty n\omega_n \hat \rho(r) r^{n-1} \|\mathscr A_r T\|_p \|\psi\|_{L^q}  \, dr\, \\
	& \le  \int_0^\infty n\omega_n \hat \rho(r) r^{n-1} \|\Acal T \|_p \|\psi\|_{L^q}  \, dr\, = \|\rho\|_{L^1} \|\Acal T\|_p \|\psi\|_{L^q}\,.
\end{align*} 
Taking the supremum over all $\psi$ with $\|\psi\|_{L^q} = 1$ yields the desired bound. 

The proof is complete.
\end{proof}

\begin{corollary}\label{cor:localization_distributions}
	Let $1 \le p \le \infty$ and let $\varphi \in \mathscr D(\R^n;V)$. Then
	\[
		\Afrak_{\rho_\eps} \varphi \stackrel{C^\infty}\longrightarrow \Acal \varphi \quad \text{and} \quad \Afrak_{\rho_\eps} \varphi \stackrel{L^p}\longrightarrow \Acal \varphi\, \quad \text{as $\eps \to 0^+$.}
	\]
The first convergence improves to convergence in $\mathscr D$ provided that  $\{\rho_\eps\}_\eps$ is uniformly compactly supported.

If $T \in \mathscr D'(\R^n;V)$ is a strongly admissible distribution (or if the family $\{\rho_\eps\}_\eps$ is uniformly compactly supported), then 
\[
	\Afrak_{\rho_\eps} T \stackrel{\mathscr D'}\longrightarrow \Acal T\, \quad \text{as $\eps \to 0^+$}.
\]
\end{corollary}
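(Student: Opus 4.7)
The plan is to derive everything from two structural facts: the superposition representation of Corollary~\ref{cor:superposition},
\[
\Afrak_{\rho_\eps}\varphi = n\omega_n \int_0^\infty \hat\rho_\eps(r)\, r^{n-1}\, \mathscr A_r \varphi \, dr,
\]
and the observation that $d\nu_\eps(r) := n\omega_n \hat\rho_\eps(r)\, r^{n-1}\, dr$ is a probability measure on $(0,\infty)$ (since $\int_{\R^n}\rho_\eps = 1$) which concentrates at $r=0^+$ as $\eps\to 0^+$, because for every $\delta>0$,
\[
\nu_\eps([\delta,\infty)) = \|\rho_\eps\|_{L^1(\R^n\setminus B_\delta)} \xrightarrow[\eps\to 0^+]{} 0
\]
by~\eqref{eq:eps}. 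Thus the assertions will follow once we average the corresponding spherical estimates of Theorem~\ref{thm:spherical_localization} against $d\nu_\eps$.

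\emph{$C^\infty$ and $\mathscr D$ convergences for test functions.} I would use the supremum-norm bound
\[
\|\partial^\alpha (\mathscr A_r \varphi - \Acal\varphi)\|_{L^\infty} \le r\,\|D^{|\alpha|+2}\varphi\|_{L^\infty}
\]
recorded in the proof of Theorem~\ref{thm:spherical_localization}.(i), together with the trivial bound $\|\partial^\alpha(\mathscr A_r\varphi - \Acal\varphi)\|_{L^\infty}\le 2\|\partial^\alpha\Acal\varphi\|_{L^\infty}$ (which stems from Theorem~\ref{thm:spherical_localization}.(iii)). Integrating against $d\nu_\eps$ and splitting the integration domain at some threshold $\delta>0$ yields
\[
\|\partial^\alpha (\Afrak_{\rho_\eps}\varphi - \Acal\varphi)\|_{L^\infty} \le \delta\,\|D^{|\alpha|+2}\varphi\|_{L^\infty} + 2\|\partial^\alpha\Acal\varphi\|_{L^\infty}\,\|\rho_\eps\|_{L^1(\R^n\setminus B_\delta)}.
\]
Sending $\eps\to 0^+$ first and then $\delta\to 0^+$ gives convergence in $C^\infty$. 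If $\{\rho_\eps\}$ is uniformly supported in $B_R$, then $\Afrak_{\rho_\eps}\varphi$ is supported in the $R$-neighborhood of $\mathrm{spt}(\varphi)$ (by inspection of the defining convolution), so the previous $C^\infty$ convergence upgrades to convergence in $\mathscr D$.

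\emph{$L^p$ convergence for test functions.} Here I would appeal to the identity $\Afrak_{\rho_\eps}\varphi = \mu_{\rho_\eps}\star\Acal\varphi$ from Appendix~\ref{sect:multi}. A Fubini calculation with~\eqref{eq:multiplier} shows $\int_{\R^n}\mu_{\rho_\eps} = \|\rho_\eps\|_{L^1} = 1$ and $\int_{\R^n\setminus B_\delta}\mu_{\rho_\eps}\le\|\rho_\eps\|_{L^1(\R^n\setminus B_\delta)}\to 0$, so the non-negative $\mu_{\rho_\eps}$ form a classical approximate identity. The resulting $L^p$ convergence for $1\le p<\infty$ is then standard, and the case $p=\infty$ is contained in the uniform convergence from the $C^\infty$ step.

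\emph{Distributional convergence.} Fix $\psi\in\mathscr D(\R^n;W)$. By the duality established in the definition of $\Afrak_\rho$ on distributions and by the superposition representation, for every admissible $T$ one has
\[
\Afrak_{\rho_\eps}T(\psi) = \int_0^\infty \mathscr A_r T(\psi)\, d\nu_\eps(r).
\]
The integrand $r\mapsto \mathscr A_r T(\psi)$ is continuous on $(0,\infty)$ with limit $\Acal T(\psi)$ as $r\to 0^+$, thanks to the $\mathscr D'$-convergence in Theorem~\ref{thm:spherical_localization}.(2). Under each of the strong admissibility conditions (a)--(c) this integrand is also uniformly bounded on $(0,\infty)$: using Theorem~\ref{thm:spherical_localization}.(iii) and duality one has $|\mathscr A_r T(\psi)| \le \|T\|_p\|\Acal^*\psi\|_q$ in case~(b), $|\mathscr A_rT(\psi)|\le \|\Acal T\|_p\|\psi\|_q$ in case~(c), and a $C^k$-type bound in case~(a) since $T$ has finite order on its compact support. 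Consequently, the concentration of the probability measures $d\nu_\eps$ at $r=0^+$ forces
\[
\int_0^\infty \mathscr A_r T(\psi)\, d\nu_\eps(r) \xrightarrow[\eps\to 0^+]{} \Acal T(\psi),
\]
by the usual split-and-estimate argument (split at radius $r_0$ where the integrand is $\delta$-close to its limit, control the tail by the uniform bound times $\nu_\eps([r_0,\infty))\to 0$). The alternative case where $\{\rho_\eps\}$ is uniformly compactly supported follows immediately from the $\mathscr D$-convergence $\Afrak^*_{\rho_\eps}\psi\to\Acal^*\psi$ established above, since any distribution is continuous on $\mathscr D$.

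\emph{Main obstacle.} The delicate step is case~(c), in which $T$ itself may fail to belong to any $L^p$ or $\Mcal_b$, so duality against $\Afrak^*_{\rho_\eps}\psi$ is not a priori controlled by a norm on $T$. The superposition formula is what resolves this: it transfers the integrability hypothesis from $T$ to $\Acal T$, where the uniform bound $\|\mathscr A_r T\|_p\le \|\Acal T\|_p$ from Theorem~\ref{thm:spherical_localization}.(iii) is available, and reduces the convergence to a one-dimensional approximate-identity statement for $d\nu_\eps$ on $(0,\infty)$.
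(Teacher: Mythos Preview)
Your proof is correct and follows essentially the same strategy as the paper: superposition of spherical operators against the probability measure $d\nu_\eps$, followed by a split-and-estimate argument exploiting the concentration of $d\nu_\eps$ at $r=0^+$.

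There are two minor differences worth noting. First, for the $L^p$ convergence on test functions, the paper runs the same seminorm argument it used for $C^\infty$ (observing that $\|\mathscr A_r\varphi - \Acal\varphi\|_{L^p}\to 0$ and $\|\mathscr A_r\varphi\|_{L^p}\le\|\Acal\varphi\|_{L^p}$), whereas you instead invoke the approximate-identity property of $\mu_{\rho_\eps}$ from Appendix~\ref{sect:multi}; both are short and valid. Second, for the distributional convergence the paper only argues via duality and the $\mathscr D$-convergence of $\Afrak_{\rho_\eps}^*\psi$, which strictly speaking covers only the uniformly compactly supported case; your case-by-case verification of the uniform bound on $r\mapsto\mathscr A_rT(\psi)$ under each strong admissibility hypothesis (a)--(c) is more explicit and fills what is left implicit in the paper's proof.
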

	\begin{proof}
		Let $p$ be a seminorm on $\mathscr D$ satisfying
		\begin{equation}\label{eq:seminorm}
		\begin{split}
			 p(\mathscr A_r \varphi - \mathcal A\varphi) & \longrightarrow 0 \quad \text{as $r \to 0^+$}.
		\end{split}
		\end{equation}
	We use that $\rho_\eps$ is a probability function, the convexity of $p$ (in the form of Jensen's inequality), and the second property of $p$ to deduce the bound
		\begin{align*}
			p(\Afrak_{\rho_\eps}\varphi - \Acal \varphi) & \le \int_0^\infty n\omega_n \hat \rho(r)r^{n-1} p(\mathscr A_r \varphi - \Acal \varphi) \, dr \\
			& \le \|\rho_\eps\|_{L^1(B_\delta)}\sup_{r \le \delta} \left\{ p(\mathscr A_r \varphi - \Acal \varphi)\right\} + 2\|\rho_\eps\|_{L^1(\R^n \setminus B_\delta)}p(\Acal \varphi)\,.
		\end{align*}
	By~\eqref{eq:seminorm} it follows that the first quantity on the right-hand side tends to zero as $\delta \to 0^+$. On the other hand, the properties of $\{\rho_\eps\}$ yield that, similarly, the second quantity tends to zero as $\delta \to 0^+$. From this, we get
	\[
	p(\Afrak_{\rho_\eps}\varphi - \Acal\varphi) \longrightarrow 0\,.
	\]
	Since the topologies in $C^\infty$ and $L^p$ are defined by  seminorms  satisfying the convergence assumption~\eqref{eq:seminorm}, the first assertion follows directly from the previous analysis. The improved convergence with respect to the topology in $\mathscr D'$ follows from the fact that  if  $\{\rho_\eps\}_\eps$ is uniformly compactly supported, then so it is $\{\Afrak_{\rho_\eps}\varphi\}_\eps$.
	
	The second assertion follows from the duality expression of $\Afrak_\rho$ in terms of spherical gradients 
	\[
		\Afrak_{\rho_\eps}T (\psi) = T(\Afrak_{\rho_{\eps}}^* \psi)
	\]
	and the fact that (cf. Corollary~\ref{cor:localization_distributions}) $\Afrak_{\rho_{\eps}}^* \psi \stackrel{\mathscr D}\to \Acal^* \psi$ whenever the family $\{\rho_\eps\}_{\eps >0}$ is uniformly compactly supported.
	\end{proof}
	
	\begin{corollary}\label{cor:limit_exists}
		Let $1 \le p \le \infty$. If $T \in \mathscr D'(\R^n;V)$ is an admissible distribution (or $T$ is a distribution and the family $\{\rho_\eps\}_\eps$ is uniformly compactly supported), then the extended limit 
		\[
			\lim_{\eps \to 0^+} \| \Afrak_{\rho_\eps} T\|_{p} \in [0,\infty]
		\]
exists and equals $\|\Acal T\|_p$.
	\end{corollary}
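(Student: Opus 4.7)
The plan is to bracket $\lim_{\eps\to 0^+}\|\Afrak_{\rho_\eps}T\|_p$ from above and from below by $\|\Acal T\|_p$, using the two main tools already established: the $\eps$-uniform extended $L^p$ bound of Theorem~\ref{thm:Lp_bounds}.(1), and the distributional localization of Corollary~\ref{cor:localization_distributions}.

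\textbf{Upper bound.} Since each $\rho_\eps$ is a probability function, $\|\rho_\eps\|_{L^1}=1$, and Corollary~\ref{cor:superposition} (equivalently Theorem~\ref{thm:Lp_bounds}.(1), whose hypotheses are satisfied by admissibility of $T$, or trivially if $\rho_\eps$ is compactly supported) gives
\[
\|\Afrak_{\rho_\eps}T\|_p \;\le\; \|\rho_\eps\|_{L^1}\,\|\Acal T\|_p \;=\; \|\Acal T\|_p
\]
for every $\eps>0$ and every $p\in[1,\infty]$. Hence $\limsup_{\eps\to 0^+}\|\Afrak_{\rho_\eps}T\|_p \le \|\Acal T\|_p$, which is vacuously sharp when the right-hand side is infinite.

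\textbf{Lower bound.} Here the key is lower semicontinuity of the extended $L^p$-norm under distributional convergence. By Corollary~\ref{cor:localization_distributions}, $\Afrak_{\rho_\eps}T \to \Acal T$ in $\mathscr D'$ under either admissibility hypothesis. Fix $\varphi \in \mathscr D(\R^n;W)$ with $\|\varphi\|_{L^q}\le 1$, where $q$ is the H\"older conjugate of $p$. The scalar convergence $\Afrak_{\rho_\eps}T(\varphi)\to \Acal T(\varphi)$ together with $\Afrak_{\rho_\eps}T(\varphi)\le \|\Afrak_{\rho_\eps}T\|_p$ yields
\[
\Acal T(\varphi) \;=\; \lim_{\eps\to 0^+}\Afrak_{\rho_\eps}T(\varphi) \;\le\; \liminf_{\eps\to 0^+}\|\Afrak_{\rho_\eps}T\|_p\,.
\]
Taking the supremum over admissible $\varphi$ and recalling the definition of the extended $L^p$-norm gives
\[
\|\Acal T\|_p \;=\; \sup_{\|\varphi\|_{L^q}\le 1}\Acal T(\varphi) \;\le\; \liminf_{\eps\to 0^+}\|\Afrak_{\rho_\eps}T\|_p.
\]

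Combining the two estimates yields
\[
\limsup_{\eps\to 0^+}\|\Afrak_{\rho_\eps}T\|_p \;\le\; \|\Acal T\|_p \;\le\; \liminf_{\eps\to 0^+}\|\Afrak_{\rho_\eps}T\|_p,
\]
so the extended limit exists in $[0,\infty]$ and equals $\|\Acal T\|_p$. The only subtle point is that one must allow infinite values throughout: the upper estimate is trivially valid when $\|\Acal T\|_p=\infty$, and the lower estimate is precisely what forces the liminf to diverge in that case. No further machinery is required beyond the two inputs cited above and the duality definition of the extended norm.
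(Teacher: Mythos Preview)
Your proof is correct and follows essentially the same approach as the paper: the upper bound via Corollary~\ref{cor:superposition} (using $\|\rho_\eps\|_{L^1}=1$) and the lower bound via distributional convergence from Corollary~\ref{cor:localization_distributions} combined with lower semicontinuity of the extended $L^p$-norm. The paper's version is terser, merely asserting that $\|\,\sbullet\,\|_p$ is a supremum of linear functionals and hence lower semicontinuous, whereas you spell out the duality argument explicitly; otherwise the two proofs are identical.
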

	
	\begin{proof}
		A direct consequence of Corollary~\ref{cor:superposition} is that
		\[
			\limsup_{\eps \to 0^+} \| \Afrak_{\rho_\eps} T\|_{p} \le  \| \Acal T\|_{p}\,.
		\]
		Since $\|\frarg\|_p$ is the supremum of bounded linear functionals, it follows that $\|\frarg\|$ is lower semicontinuous with respect to distributional convergence.  In particular, by the previous corollary, we get
		\[
			\|\Acal T\|_p \le \liminf_{\eps \to 0^+} \| \Afrak_{\rho_\eps} T\|_{p}\,.	\]
		Casting these two observations yields that limit exists and coincides with $\|\Acal T\|_p$.
	\end{proof}

When $\Acal T \in \Mcal_b$, we can say more. Namely, that
\begin{equation}\label{eq:11}
	\Afrak_\rho T \in L^1(\R^n,W)\,.
\end{equation}
To verify this claim, let us define
\[
	g_s(x) \coloneqq  \int_s^\infty n\omega_n\hat\rho(r)r^{n-1} \mathscr A_r T(x)\, dr.
\]
From~\eqref{eq:rep}, we deduce that 
\[
	|g_s(x)| \le  \frac{1}{\omega_n s^n} \|\rho\|_{L^1} \|\Acal T\|_1,
\]
which shows that each $g_s$ is a uniformly bounded measurable map. We shall discover next that the $\{g_s\}_{s >0}$ defines a Cauchy sequence in $L^1$ (as $s \to 0^+$). We may assume without any loss of generality that $s > t$ and apply Fubini's theorem to deduce the $L^1$-estimate
\begin{align*}
	\| g_t - g_s\|_{L^1} \, dx & \le  \int_t^s n\omega_n\hat\rho(r)r^{n-1} \|\mathscr A_rT\|_{L^1} \, dr \\
	 & \le  \|\rho\|_{L^1(B_s \setminus B_t)} \|\Acal T\|_{1} \, dr\,.
\end{align*}
The absolute continuity of the integral (over $\rho$) implies that $\{g_s\}_{s >0}$ is indeed a Cauchy sequence. By definition
\begin{align*}
	\Afrak_\rho T(\varphi) & = T(\Afrak_\rho^* \varphi)  \\
	& = \lim_{s \to 0^+} \int_s^\infty n\omega_n\hat\rho(r)r^{n-1} T(\mathscr A_r^* \varphi) \, dr \\ 
	& = \lim_{s \to 0^+} \int_s^\infty n\omega_n\hat\rho(r)r^{n-1} \mathscr A_r T (\varphi) \, dr\\ 
	& = \lim_{s \to 0^+} \int \dpr{g_s(x) ,\varphi(x)} \, dx.
\end{align*} 
From this, it follows that $g_s \mathscr L^n \stackrel{\mathscr D'}\to \Afrak_\rho T$. However, since we have shown that $g_s$ is convergent in $L^1$, we immediately deduce that (identifying the distribution with the $L^1$ map)
\begin{equation}\label{eq:L1_lift}
	\Acal T \in \Mcal_b \quad \Longrightarrow \quad \Afrak_\rho  u\in L^1\,.
\end{equation}

\begin{proposition}\label{prop:12}
Let $u \in \mathscr D'(\R^n;V)$ be such that $\Acal u \in \Mcal_b(\R^n;W)$. If $\rho \in L^1(\R^n)$ is a nonnegative radial probability density, then
\[
	\|g(\Afrak_\rho T)\|_{1} \le \|g(\Acal^a T)\|_{1} +\|g^\infty(\Acal^s T)\|_1
\]
for all lower convex functions $g : W \to [0,\infty)$ with linear growth at infinity. 
\end{proposition}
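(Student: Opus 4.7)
The plan is to reduce the estimate to the corresponding spherical bound from Theorem~\ref{thm:spherical_localization}.(4) via the superposition formula of Corollary~\ref{cor:superposition}. Since $\rho$ is a nonnegative radial probability density, a change of variables in polar coordinates shows that $d\mu(r) \coloneqq n\omega_n \hat\rho(r) r^{n-1}\,dr$ is a probability measure on $(0,\infty)$. Because $\Acal T \in \Mcal_b$, the distribution $T$ is strongly admissible, and Corollary~\ref{cor:superposition} gives the identity
\[
    \Afrak_\rho T = \int_0^\infty \mathscr A_r T \,d\mu(r)
\]
in the sense of distributions; moreover, by Theorem~\ref{thm:spherical_localization}.(4) each $\mathscr A_r T$ belongs to $L^1 \cap L^\infty_{\loc}$, and the discussion surrounding \eqref{eq:L1_lift} shows that $\Afrak_\rho T \in L^1$ and is represented a.e.\ by the pointwise integral $x \mapsto \int_0^\infty \mathscr A_r T(x)\,d\mu(r)$.

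Next, I would apply Jensen's inequality pointwise: since $g$ is convex, for a.e. $x$
\[
    g\bigl(\Afrak_\rho T(x)\bigr) \le \int_0^\infty g\bigl(\mathscr A_r T(x)\bigr) \,d\mu(r).
\]
Integrating in $x$ and exchanging the order of integration via Fubini--Tonelli (whose hypotheses are met because $g$ has linear growth and $\mathscr A_r T \in L^1$ with uniform bound $\|\mathscr A_r T\|_{L^1} \le |\Acal T|(\R^n)$ coming from Theorem~\ref{thm:spherical_localization}) yields
\[
    \int g(\Afrak_\rho T)\,dx \le \int_0^\infty \!\!\left(\int g(\mathscr A_r T)\,dx\right) d\mu(r).
\]
Finally, inserting the spherical bound from Theorem~\ref{thm:spherical_localization}.(4), namely
\[
    \int g(\mathscr A_r T)\,dx \le \int g(\Acal^a T)\,dx + \int g^\infty(d\Acal^s T),
\]
and using that $\mu$ has unit total mass gives the claimed estimate.

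The main obstacle is justifying the interchange of orders of integration and the pointwise meaning of $\Afrak_\rho T(x) = \int_0^\infty \mathscr A_r T(x)\,d\mu(r)$, since Corollary~\ref{cor:superposition} phrases the superposition only in a Bochner/distributional sense. To handle this cleanly I would approximate by the truncated superpositions $g_s(x) = \int_s^\infty \mathscr A_r T(x)\,d\mu(r)$ used in the derivation of \eqref{eq:L1_lift}: these are genuine Lebesgue integrals, converge to $\Afrak_\rho T$ in $L^1$ as $s \to 0^+$, and (along a subsequence) pointwise a.e. Applying the Jensen--Fubini argument to each $g_s$ produces
\[
    \int g(g_s)\,dx \le \mu((s,\infty)) \cdot \Bigl(\textstyle\int g(\Acal^a T)\,dx + \int g^\infty(d\Acal^s T)\Bigr),
\]
and the conclusion follows by Fatou's lemma together with $\mu((s,\infty)) \to 1$ as $s \to 0^+$.
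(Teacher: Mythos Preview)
Your approach---superposition, pointwise Jensen, Fubini--Tonelli, then the spherical upper bound---is exactly the paper's. Two small remarks. First, the spherical inequality
\[
\int g(\mathscr A_r T)\,dx \le \int g(\Acal^a T)\,dx + \int g^\infty(d\Acal^s T)
\]
is not part of the \emph{statement} of Theorem~\ref{thm:spherical_localization}(4), which only asserts area convergence; it is, however, derived in item~(vii) of the proof in Section~\ref{sec:spherical} (the paper in fact re-derives it inline via $g(a+b)\le g(a)+g^\infty(b)$, Jensen, and Young). Second, your truncated bound $\int g(g_s)\,dx \le \mu((s,\infty))\cdot(\ldots)$ is not quite right when $g(0)>0$: Jensen against the sub-probability $\mu|_{(s,\infty)}$ produces an extra term involving $g(0)$. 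This is harmless because $g(0)>0$ forces the right-hand side of the proposition to be $+\infty$ (since $\Acal^a T\in L^1$ implies $|\{|\Acal^a T|<\delta\}|=\infty$), but the cleaner route is the one you outlined first: the paper simply uses the a.e.\ pointwise identity $\Afrak_\rho T(x)=\int_0^\infty \mathscr A_r T(x)\,d\mu(r)$ and applies Jensen directly against the full probability $\mu$, so the truncation detour is unnecessary.
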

\begin{proof}From~\eqref{eq:11} it follows that 
  $g(\Afrak_\rho T) \in L^1_\loc$. Let $K$ be an arbitrary compact subset of $\R^n$. Appealing to Jensen's inequality and Young's convolution inequality, we deduce that
\begin{align*}
	\int_K g(\Afrak_\rho T(x)) \, dx & = \int_K  g\left(\int_0^\infty  K_s \star T(x) \, n\omega_n \hat \rho(s) s^{n-1} \, ds \right)  \, dx \\
	& \le \int_K \int_0^\infty g(K_s \star T) \, n\omega_n \hat \rho(s) s^{n-1} \, ds \, dx\\
	& = \int_0^\infty n\omega_n \hat \rho(s) s^{n-1} \,ds \int_K g(\Acal T \star \frac{\chi_{B_s}}{|B_s|}) \, dx.
\end{align*}
The inequality $g(a + b) \le g(a) + g^\infty(b)$ and subsequent applications of Jensen's and Young's inequality permit us to estimate the right-hand side above by
\begin{align*}
\int_K & g(\Acal^a T \star \frac{\chi_{B_s}}{|B_s|}) + g^\infty(\Acal^s T \star \frac{\chi_{B_s}}{|B_s|}) \, dx \\ 
& \le \int_K g(\Acal^a T) \star \frac{\chi_{B_s}}{|B_s|} \, dx + \int_K g^\infty(\Acal^s T) \star \frac{\chi_{B_s}}{|B_s|} \, dx \\
& \quad \le \|g(\Acal^a T)\|_1 + \|g^\infty(\Acal^s u)\|_1.
\end{align*}
This finishes the proof.
\end{proof}

\begin{corollary}\label{cor:2}
Let $T \in  \mathscr D'(\R^n;V)$.  
\begin{enumerate}
\item If $1 \le p  <  \infty$ and $\Acal T \in L^p$, then
\[
	\Afrak_{\rho_\eps}T \stackrel{L^p}\longrightarrow \Acal T \quad \text{as $\eps \to 0$}.
\]
\item If $\Acal T \in \Mcal(\R^n;W)$, then
\[
	\Afrak_{\rho_\eps} T \, \mathscr L^d \stackrel{area}\longrightarrow \Acal T \quad \text{as $\eps \to 0$}.
\]
\end{enumerate}
\end{corollary}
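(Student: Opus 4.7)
The plan is to reduce both assertions to the already-established spherical localization statements of Theorem~\ref{thm:spherical_localization} via the superposition formula of Corollary~\ref{cor:superposition}. Throughout, set $\nu_\eps(r) := n\omega_n \hat\rho_\eps(r) r^{n-1}$, so that $\nu_\eps\,dr$ is a probability measure on $(0,\infty)$ (since $\|\rho_\eps\|_{L^1}=1$), and the family $\nu_\eps\,dr$ concentrates at $r=0$ as $\eps\to 0^+$ in the sense of~\eqref{eq:eps}.

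\textbf{Part (1).} Since $\Acal T\in L^p$, the distribution $T$ is strongly admissible. By Corollary~\ref{cor:superposition} and the probability normalization of $\nu_\eps$, we can write
\[
    \Afrak_{\rho_\eps}T(x)-\Acal T(x) = \int_0^\infty \bigl[\mathscr A_r T(x)-\Acal T(x)\bigr]\,\nu_\eps(r)\,dr
\]
pointwise a.e. (via the Bochner integral interpretation). Applying Jensen's inequality to the convex function $|\cdot|^p$, then integrating in $x$ and using Fubini, gives the quantitative bound
\[
    \|\Afrak_{\rho_\eps}T-\Acal T\|_{L^p}^p \le \int_0^\infty \|\mathscr A_r T-\Acal T\|_{L^p}^p\,\nu_\eps(r)\,dr.
\]
Split the integral at an arbitrary $\delta>0$. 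For $r\le\delta$, Theorem~\ref{thm:spherical_localization}.(3) yields $\|\mathscr A_r T-\Acal T\|_{L^p}\to 0$ as $r\to 0^+$, so this part is controlled by $\sup_{r\le\delta}\|\mathscr A_r T-\Acal T\|_{L^p}^p$, which can be made arbitrarily small by shrinking $\delta$. For $r>\delta$, the uniform bound $\|\mathscr A_r T-\Acal T\|_{L^p}\le 2\|\Acal T\|_{L^p}$ combined with $\|\rho_\eps\|_{L^1(\R^n\setminus B_\delta)}\to 0$ disposes of this part as $\eps\to 0^+$. Choosing $\delta$ first and then $\eps$ concludes (1). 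This step is essentially routine once Theorem~\ref{thm:spherical_localization}.(3) is available.

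\textbf{Part (2).} We use the characterization of area convergence via convex integrands of linear growth (cf.~\cite[Remark~4.2]{me}): it suffices to show that, for every convex $g:W\to[0,\infty)$ with linear growth at infinity,
\[
    \lim_{\eps\to 0^+} \int g\bigl(\Afrak_{\rho_\eps} T\bigr)\,dx = \int g(\Acal^a T)\,dx + \int g^\infty(d\Acal^s T),
\]
which we abbreviate as $I_g(\Acal T)$. Proposition~\ref{prop:12} directly provides the \emph{limsup} inequality
\[
    \limsup_{\eps\to 0^+} \int g(\Afrak_{\rho_\eps}T)\,dx \le I_g(\Acal T).
\]
For the \emph{liminf} inequality, the plan is to invoke Reshetnyak lower semicontinuity of $\mu\mapsto I_g(\mu)$ under weak* convergence of measures. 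To this end we need
\[
    \Afrak_{\rho_\eps} T\,\mathscr L^n \;\stackrel{*}{\rightharpoonup}\; \Acal T \quad\text{in } \mathcal{M}_b(\R^n;W).
\]
This weak* convergence follows from two ingredients already in hand: the distributional convergence $\Afrak_{\rho_\eps}T\to\Acal T$ in $\mathscr D'$ from Corollary~\ref{cor:localization_distributions}, together with the uniform total variation bound $\|\Afrak_{\rho_\eps}T\|_{L^1}\le |\Acal T|(\R^n)$ provided by Theorem~\ref{thm:Lp_bounds}.(3); density of $\mathscr D$ in $C_0$ then upgrades the $\mathscr D'$-convergence to $C_0$-weak* convergence. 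Matching limsup and liminf yields the area convergence claim. (If $\Acal T$ is only in the local space $\Mcal$ rather than $\Mcal_b$, the same argument applied on any compact set yields area convergence in the local sense.)

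\textbf{Main obstacle.} Part (1) is a routine application of Jensen's inequality and the approximation-of-identity property of $\{\rho_\eps\}$. The subtler point is part~(2): the two-sided matching of the area functional hinges on the \emph{sharp} estimate of Proposition~\ref{prop:12}, which requires the linear-growth recession term $\int g^\infty(d\Acal^s T)$ to appear with the \emph{optimal} constant $\|\rho_\eps\|_{L^1}=1$. Without this sharp constant, only weak convergence (rather than area convergence) could be concluded, and $L^1$ concentrations could be created in the limit. The sharpness of Proposition~\ref{prop:12} is precisely what prevents this and allows Reshetnyak lower semicontinuity to close the argument.
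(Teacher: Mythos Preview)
Your proof is correct and follows essentially the same route as the paper's own argument: for part~(1) you use the superposition formula and split the $r$-integral at a scale $\delta$, invoking Theorem~\ref{thm:spherical_localization}.(3) on the inner shell and the uniform bound $\|\mathscr A_r T\|_{L^p}\le\|\Acal T\|_{L^p}$ on the outer one (the paper uses Minkowski's integral inequality where you use Jensen, which is an immaterial difference); for part~(2) you combine the limsup bound from Proposition~\ref{prop:12} with Reshetnyak lower semicontinuity under weak* convergence, the latter obtained from distributional convergence plus the uniform $L^1$ bound, exactly as in the paper.
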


\begin{proof}
 First, we prove (1). From Theorem~\ref{thm:spherical_localization}.3 we already know that $\mathscr A_s u \to \Acal u$ in $L^p$ as $s \to^+ 0$. In particular, 
	\[
		\lim_{\delta \to ^+0} \omega(\delta) = 0, \qquad \omega(\delta) \coloneqq \sup_{s \le \delta} \|\Acal u - \mathscr A_su \|_{L^p}\,.
	\]
Now, since each $\rho_\eps$ is a probability measure, we can write
\begin{align*}
	 (\Afrak_{\rho_\eps} u - \Acal u)(x)  =  \int_0^\infty n\omega_n s^{n-1}\hat \rho_\eps(s) (\mathscr A_s u - \Acal u)(x) \, ds.
\end{align*}
Taking the $L^p$ norm of this quantity and appealing to Theorem~\ref{thm:spherical_localization}.2 we obtain, for a fixed $\delta>0$, the raw estimate
\begin{align*}
	\| \Afrak_{\rho_\eps} u - \Acal u\|_{L^p}  & \le  \int_0^\infty \omega_n s^{n-1}\hat \rho_\eps(s) \|\mathscr A_s u - \Acal u\|_{L^p} \, ds  \\
	& \le \omega(\delta) \|\rho_\eps\|_{L^1(B_\delta)} + 2\|\Acal u\|_{L^p} \|\rho_\eps\|_{L^1(\R^n \setminus B_\delta)}\,,
	\end{align*}
Thus, letting $\eps \to 0$ at both ends of the inequality and appealing to~\eqref{eq:eps} we deduce that
\[
	\limsup_{\eps \to 0} \| \Afrak_{\rho_\eps} u  - \Acal u\|_{L^p} \le \omega(\delta)\,.
\]
We can now let $\lim_{\delta \to 0} \omega(\delta) = 0$ to conclude that
\[
	\limsup_{\eps \to 0^+} \| \Afrak_{\rho_\eps} u  - \Acal u\|_{L^p} \le \lim_{\delta \to 0} \omega(\delta) = 0\,,
\]
as desired. This proves (1). 

For the proof of (2), it suffices to show that $\Afrak_{\rho_\eps} T \toweakstar \Acal T$ and (see~\cite[Section 2.1]{advances}) 
\[
	\lim_{\eps \to 0} \sqrt{1 + |\Afrak_{\rho_\eps}T|^2} \, dx \to \int \sqrt{1 + |\Acal^a T|^2} \, dx + |\Acal^s T|(\R^n)\,.
\]  
Let $g(v) = \sqrt{1 + v^2}$ so that $g^\infty(v) = |v|$. The first assertion follows from Corollaries~\ref{cor:superposition} and~\ref{cor:localization_distributions}, which imply that $\Afrak_{\rho_\eps} u$ converges to $\Acal u$ in the weak sense of measures to $\Acal u \in \Mcal_b$. The second assertion follows from a version of Reschetnyak's theorem~(see~\cite[Lemma~8]{adolfo_FA}) and by the previous proposition, which allows us to conclude that
\begin{align*}
	\limsup_{\eps \to 0} \|g(\Afrak_{\rho_\eps} u)\|_{L^1} & \le \|g(\Acal^a u)\|_{L^1} + |g^\infty(\Acal^s u)|(\R^n) \\
	& \le \liminf_{\eps \to 0} \|g(\Afrak_{\rho_\eps} u)\|_{L^1}.
\end{align*}
Thus, demonstrating that the limit $\lim_\eps \|g(\Afrak_{\rho_\eps} u)\|_{L^1}$ exists. We hence deduce that $\Afrak_{\rho_\eps} u$ converges in the area sense of measures to $\Acal u$. 

This finishes the proof. \end{proof}

In general, weak-$\star$ convergence in $L^\infty$ plus convergence of the $L^\infty$ norms do not guarantee strong convergence in $L^\infty$. This stems from the fact that $L^\infty$ is not a uniformly convex Banach space. One would therefore expect that the convergence in point (1) fails for $p =\infty$. 
Below we furnish a simple example that showcases this is precisely the case: 
\begin{example}[Failure of localization in $L^\infty$]\label{example:no_infty} 
	Consider the function  
	\[
		u(t) = |t|, \qquad t \in \R\,.
	\]
	Clearly, $u \in W^{1,\infty}_\loc$ and $u' \in L^\infty(\R)$ is given by the Heaviside function. A straightforward calculation yields the following explicit expression for its associated $s$-spherical gradient: for $s > 0$,
	\[
		\mathscr D_s u (t) = (u' \star \frac{1_{[-s,s]}}{2s})(t) = \begin{cases}
		 1 & \text{if $t \ge s$}\\
		 \frac{t}{s} & \text{if $t \in (-s,s)$}\\
		 -1 & \text{if $t \le -s$}
		\end{cases}
	\] 
For each $\eps>0$, consider the family of probability weights:
\[
	\rho_\eps(t) = \frac{1_{[\eps,2\eps]}(|t|)}{2\eps}\,, \qquad t \in \R\,.
\]
Notice that if $|t| \le \eps$, then the superposition formula for the nonlocal radial gradient gives (here we are using that $n =1$ and $\omega_1 = 2$)
\begin{align*}
	\Dfrak_{\rho_\eps} u(t) & = \frac 1\eps \int_\eps^{2\eps} \mathscr D_r u(t) \, dr \\
	& = \frac 1\eps \int_\eps^{2\eps} \frac tr \, dr = \frac t\eps \ln (2)\,.
\end{align*}
This shows that $\mathscr D_{\rho_\eps} u$ is linear on the interval $(-\eps,\eps)$. In particular, we are able to deduce the following uniform lower bound
\[
\|\mathfrak D_{\rho_\eps} u - u'\|_{L^\infty(-1,1)} \ge 1 - \ln (2) > 0\,.
\]
This comes to show that, for this particular choice of weights, the nonlocal operators $\mathfrak D_{\rho_\eps} u$ fail to converge to $u'$ in $L^\infty$ as $\eps \to 0$.
\end{example}

The next result shows that $\mathfrak A_\rho u$ can be expressed by the principal value integral holding for smooth maps whenever $u$ is locally summable and $\mathfrak A_\rho$ is $L^p$-bounded.

\begin{lemma}\label{lem:11}
Let $1 \le p < \infty$ and let $u \in  L^1_\loc(\R^n;V)$ be an admissible distribution. Then
\[
\|\Afrak_\rho u\|_p^p \le \liminf_{\delta \to 0} \int \left| \int_{\R^n \setminus B_\delta} K_\rho(h)[u(x+h) - u(x)]\, dh \right|^p. 
\] 
The limit above exists, commutes with the integral, and
\[
	\Afrak_\rho u(x) = \lim_{\delta \to 0^+} \int_{\R^n \setminus B_\delta} K_\rho(h)[u(x+h) - u(x)] \, dh
\]
for almost every $x \in \R^n$.
\end{lemma}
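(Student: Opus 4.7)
The strategy rests on the Bochner-integral superposition $\Afrak_\rho u = n\omega_n\int_0^\infty \hat\rho(r)r^{n-1}\mathscr A_r u\,dr$ established in Corollary~\ref{cor:superposition}, viewing the truncated integral $g_\delta$ as precisely the tail of this superposition for radii $r > \delta$. First, for $u \in L^1_\loc(\R^n;V)$ the pointwise bound $|K_\rho(h)| \le n\|\Omega_\Abb\|_{L^\infty(S^{n-1})}\rho(h)/\delta$ valid on $\{|h|>\delta\}$, together with $\rho \in L^1$, shows that the integrand defining $g_\delta(x)$ is in $L^1(dh)$ for a.e. $x$, so $g_\delta$ makes sense as a classical integral. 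Passing to polar coordinates $h = r\omega$ and invoking the cancellation $\int_{\partial B_1}\Omega_\Abb\,dS = 0$ from~\eqref{eq:cancel} to discard the $-u(x)$ contribution on each sphere, a Fubini computation yields
\[
g_\delta(x) = n\omega_n \int_\delta^\infty \hat\rho(r)\, r^{n-1}\mathscr A_r u(x)\,dr \qquad \text{for a.e. } x\in\R^n,
\]
with $\mathscr A_r u = K_r \star u$ understood in the sense of Section~\ref{sec:spherical}.

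Next I would combine this polar decomposition with admissibility to obtain both distributional convergence $g_\delta \to \Afrak_\rho u$ and the Fatou-type $L^p$-bound. For any $\psi\in\mathscr D(\R^n;W)$, Lemma~\ref{lem:definition_radial} ensures that $r\mapsto \hat\rho(r)r^{n-1}\mathscr A_r u(\psi)$ is absolutely integrable on $(0,\infty)$, so Fubini plus Corollary~\ref{cor:superposition} give
\[
(\Afrak_\rho u - g_\delta)(\psi) = n\omega_n \int_0^\delta \hat\rho(r)\,r^{n-1}\mathscr A_r u(\psi)\,dr \xrightarrow[\delta\to 0^+]{} 0.
\]
Since the extended norm $\|\cdot\|_p$ is a supremum of continuous linear functionals and thus lower semicontinuous under distributional convergence,
\[
\|\Afrak_\rho u\|_p \;\le\; \liminf_{\delta\to 0^+}\|g_\delta\|_p,
\]
which yields the Fatou-type inequality of the lemma upon noting that $\|g_\delta\|_p^p = \int|g_\delta|^p$ whenever $g_\delta\in L^p$ and the right-hand side is otherwise $+\infty$.

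Finally, for the a.e. pointwise identification I would reduce to the nontrivial case $\|\Acal u\|_p < \infty$ and invoke Theorem~\ref{thm:spherical_localization}.(2), which delivers $\|\mathscr A_r u\|_p \le \|\Acal u\|_p$ uniformly in $r$. Minkowski's integral inequality then gives
\[
\int_0^\infty \hat\rho(r)\, r^{n-1}\|\mathscr A_r u\|_p\,dr \;\le\; \tfrac{1}{n\omega_n}\|\rho\|_{L^1}\|\Acal u\|_p \;<\; \infty,
\]
and Fubini shows that for a.e. $x$ the scalar map $r\mapsto \hat\rho(r)r^{n-1}\mathscr A_r u(x)$ lies in $L^1((0,\infty))$. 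Combined with the polar form, this implies
\[
G(x)\;\coloneqq\; n\omega_n\int_0^\infty \hat\rho(r)\, r^{n-1}\mathscr A_r u(x)\,dr \;=\; \lim_{\delta\to 0^+} g_\delta(x)\qquad\text{a.e.},
\]
with $|g_\delta(x)| \le \int_0^\infty \hat\rho(r) r^{n-1} |\mathscr A_r u(x)|\,dr \in L^p$. A second Fubini argument matches $G$ with the distribution $\Afrak_\rho u$ of Corollary~\ref{cor:superposition}, so $\Afrak_\rho u(x) = \lim_{\delta\to 0^+} g_\delta(x)$ a.e.; dominated convergence in $r$ then promotes the $\liminf$ above to a genuine limit equal to $\|\Afrak_\rho u\|_p^p$, giving the asserted commutation of limit and integral.

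\textbf{Main obstacle.} The delicate point is arranging the Fubini/Minkowski pipeline uniformly across the cases of Definition~\ref{def:admissible}; the borderline instance is $p=1$ with $\Acal u \in \Mcal_b \setminus L^1$, where the straightforward Minkowski bound must be replaced with the sharpened estimate $\|\mathscr A_r u\|_{L^1}\le |\Acal u|(\R^n)$ from Theorem~\ref{thm:spherical_localization}.(4) and the pointwise formula $\mathscr A_r u(x) = \Acal u(B_r(x))/(\omega_n r^n)$ of Corollary~\ref{lem:GG}, in order to keep the a.e. limit compatible with the $L^1$-representative of $\Afrak_\rho u$ supplied by~\eqref{eq:L1_lift}.
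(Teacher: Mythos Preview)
Your argument is correct and follows a genuinely different route from the paper's. The paper proves the Fatou inequality by a direct duality computation: it writes $\langle u,\Afrak^*_\rho\varphi\rangle$ as $\iint\langle K_\rho(h)u(x+h),\varphi(x)-\varphi(x+h)\rangle\,dx\,dh$ (justifying absolute integrability from admissibility), rewrites this as a limit of truncated integrals, and applies H\"older before taking the supremum over test functions $\varphi$. You instead pass through the spherical superposition, identifying $g_\delta$ with the tail $n\omega_n\int_\delta^\infty\hat\rho(r)r^{n-1}\mathscr A_r u\,dr$, obtaining distributional convergence $g_\delta\to\Afrak_\rho u$ from Lemma~\ref{lem:definition_radial} and Corollary~\ref{cor:superposition}, and then invoking lower semicontinuity of $\|\frarg\|_p$. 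For the pointwise identity, the paper shows $\{g_\delta\}$ is Cauchy in $L^p$ (via $\|\mathscr A_r u\|_p\le\|\Acal u\|_p$ and the absolute continuity of the integral against $\rho$) and hence convergent in $L^p$ to $\Afrak_\rho u$; your dominated-convergence-in-$r$ argument yields the full a.e.\ limit more transparently, since the Cauchy-in-$L^p$ route strictly gives a.e.\ convergence only along a subsequence unless one also appeals to the dominating function the paper constructs but does not explicitly exploit. Your route cashes in the spherical machinery already built in Section~\ref{sec:spherical}; the paper's is more self-contained at this stage. Both proofs tacitly assume $\|\Acal u\|_p<\infty$ for the second half (matching the hypothesis of Lemma~\ref{lem:integral_limit}), and both gloss over the same integrability subtlety in admissibility case~(c), so there is no loss on that front. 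Your ``main obstacle'' is slightly overstated: the case $p=1$ with $\Acal u\in\Mcal_b$ is already covered by the extended-norm bound of Theorem~\ref{thm:spherical_localization}.(2) together with the $L^1$-improvement in~(4), so no separate argument is needed.
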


\begin{proof}
Let $\varphi \in \mathscr D(\R^n;V)$. Since the integrand 
\[
(x,h) \mapsto \dpr{\Kcal_\rho(h)u(x+h),\varphi(x) - \varphi(x+h)}
\]
is absolutely integrable on $\R^n \times \R^n$, we deduce from Fubini's theorem  that
\begin{align*}
	\dpr{u , \Afrak^*_\rho \varphi} 
	& =  \int\int\dpr{\Kcal_\rho(h)u(x+h),\varphi(x) - \varphi(x+h)} \, dx \, dh.
\end{align*}
Now, the right-hand side can be expressed as the limit 
\begin{align*} 
	& = \lim_{\delta \to 0} \int_{B_\delta^c}\int\dpr{\Kcal_\rho(h)[u(x+h) - u(x)],\varphi(x) } \, dx \, dh \\
		& =\lim_{\delta \to 0}  \int \int_{B_\delta^c} \dpr{\Kcal_\rho(h)[u(x+h) - u(x)],\varphi(x) } \, dh \, dx \\
			& =\lim_{\delta \to 0}  \int\dpr{ \int_{B_\delta^c} \Kcal_\rho(h)[u(x+h) - u(x)]\, dh ,\varphi(x) }  \, dx \\
	& \le \|\varphi\|_{L^q} \liminf_{\delta \to 0}   \left(\int \left| \int_{B_\delta^c} \Kcal_\rho(h)u(x+h)\, dh \right|^p dx \right)^\frac{1}{p}.
\end{align*}
Taking the supremum over all $\varphi \in \mathscr D(\R^n;W)$ with $\|\varphi\|_{L^q} = 1$, where $q$ is the H\"older conjugate exponent of $p$, we conclude that 
\[
	\|\Afrak_\rho u\|_p^p \le \liminf_{\delta \to 0}  \int \left| \int_{B_\delta^c} K_\rho(y-x)u(y) \, dy \right|^p.
\]
This proves the first assertion.

The previous chain of equalities also shows that, whenever $\|\Acal u\|_p < \infty$ for some $1 \le p \le \infty$, then
\[
	\Afrak_\rho u = \pv  \int \Kcal_\rho(h)[u(x+h)] \, dh 
\]
in the sense of distributions on $\R^n$.
Let us now assume that $|\Acal u|$ is a locally finite measure (which holds whenever $\|\Acal u\|_p < \infty$). Since for every $r > 0$ the function $x \mapsto |\Acal u|(B_r(x))$ is Borel measurable, it follows that the non-negative function defined by
\[
	  f(x) \coloneqq \int_0^\infty \hat \rho(r) r^{-1}|\Acal u|(B_{r}(x))\varphi(x) \, dr \in [0,\infty], \qquad x \in \R^n,
\]
is also Borel measurable on $\R^n$. Moreover,  
 Fubini's theorem and Young's convolution inequality convey that $f$ is $p$-integrable  with $\|f\|_{L^p} \le \|\rho\|_{L^1} \|\Acal u\|_{p}$. 
Let us now define the auxiliary measurable maps 
\[
	g_\delta(x) \coloneqq \int_{\R^n \setminus B_\delta} \Kcal_\rho(h)[u(x+h) - u(x)]\, dh, \quad x \in \R^n.
\]
Suppose that $t > s > 0$ are arbitrary. We have
\begin{align*}
	\int |g_{t} - g_s|^p \, dx & \le \int \int_{B_t \setminus B_s} |\Kcal_\rho(h) [u(x + h)]|^p \, dh \, dx  \\
	& \le \int \int_s^t n\omega_n\hat \rho(r)r^{n-1} |K_r \star u(x)|^p \, dr \, dx\\
	& \le \int_s^t \int  n\omega_n\hat \rho(r)r^{n-1} |K_r \star u(x)|^p \, dx \, dr\\
	&  \le  \|\rho\|_{L^1(B_t \setminus B_s)}  \|\Acal u\|_{p}^{p}. 
\end{align*}
From this and the absolute continuity of the integral on summable functions, it follows that $\{g_\delta\}_{\delta > 0}$ is a Cauchy sequence (as $\delta \to 0^+$) in $L^p(\R^n;W)$. Since also 
\[
g_\delta \longrightarrow \mathfrak A_{\rho} u \quad \text{as $\delta \to 0^+$ \; in the sense of distributions}.
\] 
we conclude that 
\[
	g_\delta \longrightarrow \mathfrak A_{\rho} u \quad \text{as $\delta \to 0^+$ \; in $L^p(\R^n;W)$,}
\]
This finishes the proof.
\end{proof}

\subsection*{Proof of Proposition~\ref{prop:spherical_kernel}} 
The lower bound
\[
	\|\Afrak_\rho u\|_p	\le \|\rho\|_{L^1} \|\Acal u\|_p
\] 
conveys the set contention $\ker_{\mathscr D'} \Acal \subset \ker_{\mathscr D'} \mathfrak A_\rho$. We are therefore only left to prove the reverse set inclusion. By standard regularization and linearity of the operators, it suffices to work with smooth maps only.

In the following we will use the fact that (see.~\cite[\S B.4]{grafakos2008classical}) 
\[
    \widehat{1_{B_1}}(\xi) = |\xi|^{-n/2} J_{n/2}(2\pi|\xi|)\,.
\]
Denoting $\chi_r \coloneqq \frac{1_{B_r}}{|B_r|}$, we get
\begin{align*} 
\widehat{\chi_r}(\xi) & = \Fcal(\frac{1_{B_1}(r^{-1} \frarg)}{\omega_n r^n})(\xi)  = \omega_n^{-1} \widehat{1_{B_1}}(r \xi) \\
& = \omega_n^{-1} |r\xi|^{-n/2} J_{n/2}(2\pi r|\xi|) \eqqcolon G_r(\xi)\,.    
\end{align*}

\emph{Proof of (1).} The assumption $1 \le p \le 2$ guarantees that $\widehat u$ is properly a function. In particular, it follows that the distributional Fourier transforms of $\Acal u$ and $\mathscr A_s u$ are given by (locally integrable) functions. Namely,  
\[
    \Fcal(\Acal u)(\xi) = \Abb(\xi) \widehat u(\xi) 
\]
and 
\[
\Fcal(\mathscr A_s u)(\xi) = \Fcal(\Acal u \star \chi_s)(\xi) = G_s(\xi) \Fcal(\Acal u)(\xi)\,.
\]

Recalling that the set of zeroes of $J_{n/2}$ is discrete and hence also of $G_s$, we deduce that $\Fcal(\mathscr A_s u) = 0$ if and only if $\Fcal(\Acal u) = 0$ as functions. Since Fourier inversion applies in the sense of tempered distributions, we conclude that $\mathcal A u = 0$ if and only if $\mathscr A_s u = 0$. 

The fact that the proof works for all $1 \le p < \infty$ in dimension one follows directly from the fundamental theorem of calculus and the fact that intervals tessellate the real line. Indeed, in one dimension we can reduce the problem to the case when $\Acal = c (d/dt)$ for some (possibly zero) constant $c$. Hence, either $c=0$ and $\Acal = \mathscr A_s$, or (without loss of generality) $c = 1$ and
\[
	\mathscr A_s u(t) = \int_t^{t + s} \Acal u  =  \int_t^{t + s} u' =  u(t +s) - u(t) 
\] 
for all $t \in \R$. From this, it follows that $\mathscr A_s u = 0$ if and only if $u$ is $s$-periodic. However, for $1 \le p < \infty$, the only $p$-integrable $s$-periodic function is the zero function. This shows that $\mathscr A_s u = 0$ if and only if $u = 0$ (or equivalently, that $\Acal u = 0$). This proves (1).

\emph{Proof of (2).} The proof that for $p = \infty$ and $n=1$, one can construct a counterexample follows directly from the proof of (1): every  continuous $s$-periodic function is in the kernel of $\mathscr A_s$. We may hence assume that $n \ge 2$. Let $\sigma_n$ denote the surface measure on $S^{n-1}$. It is well-known that (see, e.g., ~\cite[B.4]{grafakos2008classical})
\[
    \widehat{\sigma_n}(\xi) =2\pi |\xi|^{-\frac{n-2}2} J_{\frac{n-2}2}(2\pi|\xi|)\,.
\]
Since Bessel functions decay as $r^{-1/2}$ at large scales, it follows that $|\widehat{\sigma_n}(r\xi)| \le Cr^{-\frac{n-1}{2}}$ for large $r$.   
Since also $\widehat{\sigma_n}$ is radially symmetric, it then follows that $\widehat{\sigma_n} \in L^p(\R^n)$ for all $p > \frac{2n}{n-1}$. 

On the other hand, the assumption that $\Acal$ is non-trivial allows us to find a vector $v \in S^{n-1} \cap \Vcal_\Acal$ satisfying $|\Abb(\nu)v| \ge \delta > 0$ for some $\nu \in S^{n-1}$. Now, for a given $s > 0$, let $r = r(s) > 0$ be such that $2\pi rs$ is a zero of $J_\frac{n}2$ so that
\[
	J_{\frac n2}(2\pi s |\xi|) = 0 \quad \text{for all $\xi \in rS^{n-1}$\,.}
\]
Define (by distributional Fourier inversion) the function 
\[
\widehat{u_s}(x) \coloneqq v {d\sigma_n(r^{-1}|\xi|)}(x), \qquad x \in \R^n.
\]
Since $\sigma_n$ is compactly supported, it follows that $u_s \in (C^\infty \cap L^p)(\R^n)$ for all $p > \frac{2n}{n-1}$. Moreover, by construction,   
\begin{align*}
    \Fcal(\mathscr A_s u_s)(\xi) & = \Abb \Fcal(u_s \star \chi_s)(\xi) \\
    & = (\Abb(\xi)[v]) \frac{J_\frac{n}2(2\pi s|\xi|)}{\omega_n  |s \xi|^{n/2} }  \, d\sigma_n(r^{-1} |\xi|) = 0\,. 
\end{align*}
This shows that $u_s \in \ker_{L^p} \mathscr A_s$. 
On the other hand, by construction, and by continuity and homogeneity of $\Abb$, we get
\[
    |\Fcal(\Acal u_s)(\xi)| = |\Abb(\xi)[v]|  {d\sigma_n(r^{-1} |\xi|)} \ge \frac{r\delta}2{d\sigma_n(r^{-1} |\xi|)} \,.
\]
for all $\xi \in rS^{n-1}$ in a sufficiently small neighbourhood of $r\nu$. This shows that $u_s \notin \ker_{L^p} \Acal$, as desired.

\emph{Proof of (3).} As mentioned earlier, it suffices to prove the assertion for $u \in \ker \mathscr A_s \cap C^\infty(\Tbb^n)$. 
The Fourier series' coefficients of $\Acal u$ and $\mathscr A_s u$ are given by
\[
    \mathfrak F (\Acal u)(m) = \Abb(m) \Ffrak u(m)  \quad \text{for all $m \in \Zbb^n$}
\]
and (recall that $G_s$ is continuous)
\[
    \mathfrak F (\mathscr A_s u)(m) = \begin{cases}
     G_s(m) \Abb(m) \Ffrak u(m)  &  \text{for all $m \in \Zbb^n - \{0\}$}\\
    0 & \text{else}
    \end{cases}\,.
\]
Therefore, the condition $\mathfrak F(\mathscr A_s u) \equiv 0$ is equivalent to the condition
\begin{equation}\label{eq:torus_As}
   \forall m \in \Zbb - \{0\}, \qquad J_\frac{n}2(2\pi s|m|) \neq 0 \quad \Rightarrow \quad \Ffrak u(m) \in \ker \Abb(m) 
\end{equation}
For simplicity lets us denote by (a) and (b) the respective conditions in (3), which we want to prove are equivalent. To see that (a) implies (b), we argue by a contrapositive argument $\neg (b) \to \neg (a)$: assume we can find $m \in \Zbb^n - \{0\}$ be such that $\Abb(m)$ is non-trivial (or equivalently that $m \in \Vcal_\Acal$) and $J_{n/2}(2\pi s|m|) = 0$. In particular, we may find $v \in V$ such that $\Abb(m)v \neq 0$. Setting $u(x) \coloneqq v e^{2\pi \mathrm{i} x \cdot m}$, we discover that
\[
    \Acal u(x) = (2\pi \mathrm i e^{2\pi \mathrm i x \cdot m}) \Abb(m)v \neq 0
\]
and 
\[
    \mathscr A_s u(x) = |sm|^{-n/2}J_\frac{n}2(2\pi s|m|) \Acal u(x) = 0\,.
\]
This proves $\neg (a)$, as desired. The implication $(b) \to (a)$ follows directly from~\eqref{eq:torus_As} and the Fourier coefficients inversion formula.

This proves (3) and the proof is finished. \qed

\subsection*{Proof of Proposition~\ref{prop:kernel} } 

\emph{Proof of (i).} Let $u \in L^p(\R^n)$. As before, the assumption $1 \le p \le 2$ guarantees that the distributional Fourier transform of $u$ is a locally integrable map and hence $\Fcal(\Acal u)$ is given by a locally integrable map. In particular $\Acal u = 0$ if and only if
\[
    \Fcal(\Acal u)(\xi) = \Abb(\xi) \widehat u(\xi) = 0 
\]
for almost every $\xi \in \R^n$. Since $u$ is an admissible distribution, the distributional expression 
\[
\Afrak_\rho u =  \int_0^\infty n\omega_n r^{n-1} \hat \rho(r) (\Acal u \star \chi_r)  \, dr\,, \qquad \chi_r \coloneqq \frac{1_{B_r}}{|B_r|}.
\]
commutes with the Fourier transform (in the sense of distributions). This yields the following identity for the distributional Fourier transform of $\Afrak_\rho u$:
\begin{align*}
    \Fcal(\Afrak_\rho u) =  n \left(\int_0^\infty \omega_n r^{n-1} \hat \rho(r) \widehat{\chi_r} \, dr \right) \Abb \widehat u\,.
\end{align*}
Once more, we use that $\widehat u$ is a locally integrable map to deduce that $\Fcal(\Afrak_\rho u)$ is also a locally integrable given by
\begin{align*}
    \Fcal(\Afrak_\rho u)(\xi) = & \left(\int_0^\infty nr^{\frac{n-2}{2}} \hat \rho(r) J_{n/2}(2\pi r|\xi|) \,dr \right)  \frac{\Abb (\xi) \widehat u(\xi)}{|\xi|^{n/2}}\\
    = &   \widehat{\mu_\rho} \Abb(\xi) \widehat u(\xi)\,
\end{align*}
for almost every $\xi \in \R^n$. 
From this expression, the continuity of $\widehat{\mu_\rho}$, and the Fourier inversion for tempered distributions, it follows that the condition $\widehat{\mu_\rho}(\xi) \neq 0$ almost everywhere on $\R^n$ is equivalent to the identity $\ker_{L^p} \Acal = \ker_{L^p} \Afrak_\rho$ for all $p \in [1,2]$. This proves (i).

\emph{Proof of (ii).} The proof of this statement follows closely from the previous case and the proof of (3) in Proposition~\ref{prop:spherical_kernel}.

This finishes the proof.\qed

\section{Applications to fine properties}\label{a:gg}

Let $u$ be a locally integrable map. 
The coarea formula implies that the Fubini trace $u^*$ of $u$ exists on $\partial B_s(x)$ for almost every $s \in (0,\infty)$ and
\[
	\int_{B_r(x)} u = \int_0^r \int_{\partial B_s(x)} u^* \, dS.
\]
We have the following generalization of the Gauss--Green theorem on balls:
\begin{lemma}\label{lem:GG}
	Let $u \in L^1_\loc(\R^n;V)$ and  assume that $\Acal u \in \Mcal(\R^n;W)$. Let $x \in \R^n$ be given. For almost every $s \in (0,\infty)$, where the Fubini trace $u^* \in L^1(\partial B_s(x);V)$ of $u$  exists on $\partial B_s(x)$ and $|\Acal u|(\partial B_s(x)) = 0$, it holds
	\[
		\Acal u(B_s(x)) = - \int_{\partial B_s(x)} \Abb(\nu(\omega))[u^*(\omega)]  \, dS(\omega).
	\] 
	Here, $\nu$ is the exterior unit normal of $B_s(x)$. 
\end{lemma}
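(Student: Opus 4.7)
The plan is to reduce the statement to the classical smooth Gauss--Green formula by mollification, and then pass to the limit in a way that respects both the trace structure on $\partial B_s(x)$ and the measure structure of $\Acal u$. Let $\{\phi_\eps\}_{\eps>0}$ be a standard family of radial mollifiers and write $u_\eps \coloneqq u \star \phi_\eps \in C^\infty(\R^n;V)$. Then $\Acal u_\eps = (\Acal u) \star \phi_\eps$ is smooth, and the classical Gauss--Green theorem applied to the ball $B_s(x)$ gives, for every $\eps > 0$ and every $s > 0$, the identity
\[
\Acal u_\eps(B_s(x)) = \int_{\partial B_s(x)} \Abb(\nu(\omega))[u_\eps(\omega)] \, dS(\omega)
\]
(up to the sign convention fixed by the adjoint $\Acal^*$, as in the convolution identity derived at the start of Section~\ref{sec:spherical}).

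For the passage to the limit on the left-hand side, observe that the locally finite measures $\Acal u_\eps \, \mathscr L^n = (\Acal u) \star \phi_\eps$ converge to $\Acal u$ in the weak-$\star$ topology of locally finite $W$-valued Radon measures as $\eps \to 0^+$. Under the hypothesis $|\Acal u|(\partial B_s(x)) = 0$, the portmanteau-type continuity of weak-$\star$ convergence on Borel sets with negligible boundary (used already in~\eqref{eq:cont}) gives $\Acal u_\eps(B_s(x)) \to \Acal u(B_s(x))$.

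The main technical point is the convergence on the right-hand side. The cleanest route is to avoid discussing $L^1$-convergence of sphere-restrictions of $u_\eps$ and instead test $\Acal u$ directly against the smooth scalar cutoff $\eta_\delta(|\sbullet - x|)$, where $\eta_\delta : [0,\infty) \to [0,1]$ is non-increasing with $\eta_\delta \equiv 1$ on $[0,s-\delta]$ and $\eta_\delta \equiv 0$ on $[s+\delta,\infty)$. For an arbitrary $w \in W$, integration by parts against $\varphi_\delta(y) \coloneqq \eta_\delta(|y-x|)w$ yields
\[
\int \eta_\delta(|y-x|) \, d\Acal u \cdot w = -\int u(y) \cdot \Bigl[\eta_\delta'(|y-x|) \, \Abb^t\!\bigl(\tfrac{y-x}{|y-x|}\bigr) w\Bigr] dy.
\]
Using the coarea formula on the right, the inner integral becomes $\int_0^\infty \eta_\delta'(r) F(r) \, dr \cdot w$, where
\[
F(r) \coloneqq \int_{\partial B_r(x)} \Abb(\nu(\omega))[u^*(\omega)] \, dS(\omega).
\]
By Fubini/coarea applied to $y \mapsto \Abb\bigl(\tfrac{y-x}{|y-x|}\bigr)u(y) \in L^1_\loc$, the function $F$ is locally integrable on $(0,\infty)$ and is well defined precisely for those $s$ where the Fubini trace $u^*$ exists on $\partial B_s(x)$.

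Letting $\delta \to 0^+$, the left-hand side tends to $\Acal u(B_s(x)) \cdot w$ by dominated convergence and the hypothesis $|\Acal u|(\partial B_s(x)) = 0$; the family $-\eta_\delta'$ is non-negative, has unit mass, and concentrates at $r = s$, so by Lebesgue differentiation applied to $F \in L^1_\loc(0,\infty)$ the right-hand side tends to $F(s)\cdot w$ at every Lebesgue point of $F$---in particular for almost every $s > 0$. Since $w \in W$ was arbitrary, one recovers the claimed identity on the full-measure set of radii where the two hypotheses (existence of $u^*$ on $\partial B_s(x)$ and $|\Acal u|(\partial B_s(x)) = 0$) hold simultaneously. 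The delicate step is precisely this last Lebesgue differentiation: the whole point of restricting to almost every $s$ is to circumvent the lack of a trace theory for generic $u \in L^1_\loc$, and it is the assumption on the Fubini trace together with the coarea formula that guarantees $F$ is genuinely an $L^1_\loc$ representative to which differentiation theorems apply.
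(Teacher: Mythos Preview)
Your proof is correct but takes a genuinely different route from the paper's. The paper mollifies $u$ to $u_\eps = \omega_n\eps^{-n}\chi_{B_\eps} \star u$, applies Gauss--Green to $u_\eps$, and then argues directly that $u_\eps|_{\partial B_s(x)} \to u^*$ in $L^1(\partial B_s(x))$ for those $s$ where the Fubini trace exists; this sphere-trace convergence is the paper's only nontrivial step. You, by contrast, abandon the trace-convergence route and instead test $\Acal u$ against the radial cutoff $\eta_\delta(|\sbullet - x|)w$, unwind the boundary term via coarea into $-\int \eta_\delta'(r) F(r)\,dr$, and pass to the limit by Lebesgue differentiation of $F \in L^1_\loc(0,\infty)$. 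This is arguably cleaner in that it never requires controlling $u_\eps$ on a fixed codimension-one set; the trade-off is that your argument delivers the identity only at Lebesgue points of $F$, so the good set of radii is implicitly the intersection of \emph{three} full-measure conditions (trace exists, $|\Acal u|(\partial B_s(x)) = 0$, and $s$ a Lebesgue point of $F$) rather than the two named in the statement---which is harmless since all three are co-null. Note also that your opening two paragraphs on mollification are ultimately unused: the test-function argument is self-contained and rederives the left-hand-side limit by dominated convergence without any appeal to $u_\eps$, so you could delete the mollification setup entirely.
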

\begin{proof}
We may, without loss of generality assume that $x = 0$. We argue by approximation. Let $u_\eps = \omega_n\eps^{-n}\chi_{B_\eps}  \star u$. If $s \in (0,1)$ is such that $u^*$ exists on $\partial B_s(x)$, then it also holds
\begin{align*}
\int_{\partial B_s} |u^* - u^\eps| \, dS & \le \int_{\partial B_s} \fint_{B_\eps(y)} |u^*(y) - u(z)|\, dz\,  dS(y) \\
& \le \frac1{2\eps} \int_{s-\eps}^{s +\eps} \int_{\partial B_\rho} |u(sy/\rho) - u(z)|\, dS \, d\rho = \mathrm{O}(\eps)\,.
\end{align*}
Letting $\eps \to 0$ and applying the Gauss--Green theorem for smooth maps, we observe that if $|\Acal u|(\partial B_s(x)) = 0$, then
\[
	\int_{\partial B_s} \Abb(\nu)[u^*] \, dS \leftarrow \int_{\partial B_s} \Abb(\nu)[u_\eps] \, dS  = \Acal u_\eps(B_s(x)) \to \Acal u(B_s(x))\,.
\]
The finishes the proof.
\end{proof}
\begin{corollary}
Let $u \in L^1_\loc(\R^n;V)$. Then, 
\[
	\Acal u = 0 \quad \text{in the sense of distributions on $\R^n$}
\]
if and only if for every $x\in \R^n$  it holds 
\[
	 \int_{\partial B_s(x)} \Abb(\nu(h)) [u^*(h)] \,dS(h)  = 0,
\]
for every $s \in (0,\infty)$, where the trace $u^* \in L^1(\partial B_s(x);V)$ exists. 
\end{corollary}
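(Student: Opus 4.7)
The plan rests on two tools already developed in the paper: the generalized Gauss--Green formula of Lemma~\ref{lem:GG}, and the distributional localization $\mathscr A_s u \to \Acal u$ from Theorem~\ref{thm:spherical_localization}.(2). The key bridge is the pointwise identity
\[
    \omega_n s^n \,\mathscr A_s u(x) \;=\; \pm\int_{\partial B_s(x)} \Abb(\nu(y))[u^*(y)]\,dS(y),
\]
valid at every pair $(x,s)$ for which the Fubini trace $u^* \in L^1(\partial B_s(x);V)$ exists. For smooth $u$ this is precisely the chain of equalities from Section~\ref{sec:spherical} that yielded $\mathscr A_s\varphi = K_s \star \varphi = |B_s|^{-1}\chi_{B_s}\star \Acal\varphi$. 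The extension to $L^1_\loc$ follows verbatim from the mollification argument used inside the proof of Lemma~\ref{lem:GG}: apply the identity to $u_\eps = u \star \omega_n\eps^{-n}\chi_{B_\eps}$ and pass $\eps\to 0^+$, using the $L^1$ convergence of the traces $u_\eps^*\to u^*$ on $\partial B_s(x)$.

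For the forward implication, assume $\Acal u = 0$ in $\mathscr D'$. Then $\Acal u\in \Mcal$ is the zero measure, so $|\Acal u|(\partial B_s(x)) = 0$ and $\Acal u(B_s(x)) = 0$ for every $(x,s)$. Lemma~\ref{lem:GG} immediately yields the vanishing of the surface integral at every $(x,s)$ where the trace exists.

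For the reverse implication, fix $s>0$. A Fubini computation combined with the translation invariance of the Lebesgue measure,
\[
    \int_{B_R}\int_{\partial B_s(x)} |u^*(y)|\,dS(y)\,dx \;\le\; n\omega_n s^{n-1}\|u\|_{L^1(B_{R+s})},
\]
guarantees that, for this fixed $s$, the trace $u^* \in L^1(\partial B_s(x);V)$ exists at Lebesgue-a.e. $x\in\R^n$. Combining with the standing assumption and the bridging identity displayed above, we obtain $\mathscr A_s u(x) = 0$ for a.e. $x \in \R^n$, and hence $\mathscr A_s u \equiv 0$ as a distribution, for \emph{every} $s>0$. Passing $s\to 0^+$ and invoking Theorem~\ref{thm:spherical_localization}.(2) yields $\Acal u = 0$ in $\mathscr D'$.

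The only delicate step is the pointwise identity between $\omega_n s^n\mathscr A_s u(x)$ and the surface integral for rough $u \in L^1_\loc$; however, this is essentially the content (and the technique) of Lemma~\ref{lem:GG}, so no genuinely new obstacle arises beyond what has already been handled.
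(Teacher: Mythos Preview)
Your proof is correct, and since the paper states this corollary without proof (treating it as a direct consequence of Lemma~\ref{lem:GG}), your argument supplies exactly the details that are implicitly required. The forward direction is indeed immediate from Lemma~\ref{lem:GG}; the reverse direction genuinely needs something more than that lemma, since its hypothesis $\Acal u\in\Mcal$ is not available a priori, and your route via $\mathscr A_s u=0$ together with the distributional localization of Theorem~\ref{thm:spherical_localization}.(2) is the natural one in this framework.

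One small simplification: the bridging identity between $\mathscr A_s u(x)$ and the surface integral does not actually require the mollification argument borrowed from Lemma~\ref{lem:GG}. Since $K_s$ is a compactly supported measure, the distributional convolution $K_s\star u$ with $u\in L^1_\loc$ is represented pointwise a.e.\ by the convolution integral $\int \Omega_\Abb(\omega)u(x+\omega)\,dm_s(\omega)$; your Fubini estimate already shows this integral converges absolutely for a.e.\ $x$, and that integral is precisely $(\omega_n s^n)^{-1}\int_{\partial B_s(x)}\Abb(\nu)u^*\,dS$ by a change of variables. So the identity holds a.e.\ by definition, without any approximation step.
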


We recall some of the basic concepts associated with maps' fine properties. A point $x \in \R^n$  is called a Lebesgue point of $u$ (or an approximate continuity point) if and only if 
\[
	\limsup_{r \to 0} \fint_{B_r(x)} |u(y) - z| = 0 \quad \text{for some $z \in V$}.
\] 
We write $S_u$ to denote the set of all (Lebesgue) discontinuity points of $u$, where the limsup above is strictly positive. 
Associated with these measure-theoretic analytical concepts, we define for a vector-valued measure $\mu$  the set 
\[
	\Theta^{s}(\mu) \coloneqq \set{x \in \R^n}{\limsup_{r \to 0} \frac{|\mu|(B_r(x))}{r^{s}} > 0},
\]
consisting of all its positive $s$-dimensional density points. 
The following results associate the approximate continuity and differentiability properties of $u$ to specific dimensional densities of $\Acal u$.
\begin{thmx}[Density estimates]\label{thm:fine}
Let $u \in L^1_\loc(\Omega;V)$ and assume that $\Acal u \in \Mcal(\Omega;W)$. Then, at $|\Acal u|$-almost every $x \in \Omega$ where $u$ is approximately Lebesgue continuous it holds
\[
	\limsup_{r \to 0^+} \frac{|\Acal u|(B_r(x))}{r^{n-1}} = 0.
\]
In particular,
\[
|\Acal u|(\Theta^{n-1}(\Acal u) \setminus S_u) = 0.
\]
\end{thmx}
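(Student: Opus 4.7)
The plan is to combine the Gauss--Green representation of Lemma~\ref{lem:GG} with a coarea identity and the Besicovitch differentiation theorem. Fix $x \in \Omega \setminus S_u$ and let $z \in V$ be its Lebesgue value, so that setting $v \coloneqq u - z$ one has
\[
\int_{B_R(x)} |v(y)|\, dy = \SmallO(R^n) \quad \text{as } R \to 0^+.
\]
Since $\Acal$ annihilates constants, $\Acal u = \Acal v$; and by the cancellation property~\eqref{eq:cancel}, Lemma~\ref{lem:GG} applied to $v$ gives, for a.e.\ $s > 0$ at which the Fubini trace of $v$ on $\partial B_s(x)$ exists and $|\Acal u|(\partial B_s(x)) = 0$,
\[
|\Acal u(B_s(x))| = \left|\int_{\partial B_s(x)} \Abb(\nu)[v^*]\, dS\right| \le C_{\Abb}\int_{\partial B_s(x)} |v^*|\, dS,
\]
with $C_{\Abb} \coloneqq \|\Abb\|_{L^\infty(S^{n-1})}$. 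Integrating in $s$ over the full-measure set of good radii within $(0,R)$ and invoking the coarea formula yields
\[
\int_0^R |\Acal u(B_s(x))|\, ds \le C_{\Abb}\int_{B_R(x)}|v|\, dy = \SmallO(R^n).
\]

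To pass from $|\Acal u(B_s(x))|$ to the total variation $|\Acal u|(B_s(x))$, I would invoke the Besicovitch differentiation theorem for the $W$-valued Radon measure $\Acal u$: at $|\Acal u|$-a.e.\ $y \in \Omega$ the Radon--Nikod\'ym density $d\Acal u/d|\Acal u|(y)$ has unit norm, so
\[
\lim_{r \to 0^+}\frac{|\Acal u(B_r(y))|}{|\Acal u|(B_r(y))} = 1,
\]
and in particular $|\Acal u|(B_r(y)) \le 2|\Acal u(B_r(y))|$ for all sufficiently small $r$. Restricting attention to those $x \in \Omega \setminus S_u$ that also satisfy this Besicovitch condition---a restriction that removes only a $|\Acal u|$-null set---the display above upgrades to
\[
\int_0^R |\Acal u|(B_s(x))\, ds \le 2\int_0^R |\Acal u(B_s(x))|\, ds = \SmallO(R^n)
\]
for all $R$ small enough. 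Exploiting the monotonicity of $s \mapsto |\Acal u|(B_s(x))$ then gives
\[
\frac{R}{2}\, |\Acal u|(B_{R/2}(x)) \le \int_{R/2}^R |\Acal u|(B_s(x))\, ds \le \SmallO(R^n),
\]
which, after setting $r = R/2$, is precisely the desired conclusion $|\Acal u|(B_r(x)) = \SmallO(r^{n-1})$ as $r \to 0^+$.

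The second assertion is then immediate: any point in $\Theta^{n-1}(\Acal u)\setminus S_u$ is by definition a Lebesgue continuity point of $u$ whose $(n-1)$-dimensional upper density of $|\Acal u|$ is strictly positive, in direct contradiction with the first part outside a $|\Acal u|$-null subset. The main delicacy lies in the interplay of the two a.e.\ frameworks---Lebesgue continuity holds Lebesgue-a.e., while the Besicovitch density-one condition holds $|\Acal u|$-a.e.---but this is harmless here since the assertion is claimed only at $|\Acal u|$-a.e.\ Lebesgue continuity point; the remaining verification, namely that the set of radii on which Lemma~\ref{lem:GG} applies has full measure in $(0,R)$ (so that the two integrations in $s$ are legitimate), is routine from the fact that both the trace-existence and the $|\Acal u|(\partial B_s(x)) = 0$ conditions fail only on a null set of $s$.
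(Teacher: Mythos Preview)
Your proof is correct and follows the same skeleton as the paper's---Gauss--Green (Lemma~\ref{lem:GG}) to bound $|\Acal u(B_s(x))|$ by a surface integral of $|u-z|$, Besicovitch differentiation at density-one points of $\Acal u/|\Acal u|$ to pass from $|\Acal u(B_s(x))|$ to $|\Acal u|(B_s(x))$, and a monotonicity/doubling step to handle the exceptional radii. The difference lies in how the Lebesgue-point hypothesis is exploited. The paper asserts the pointwise spherical condition~\eqref{eq:dos}, namely that $\inf_{v_r}\fint_{\partial B_r(x)}|u-v_r|\to 0$ at every Lebesgue point, and bounds the $(n-1)$-density radius by radius before applying a doubling trick over the good set $I$. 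You instead integrate the Gauss--Green estimate in $s$ over $(0,R)$, use coarea to reach the solid average $\int_{B_R(x)}|v|=\SmallO(R^n)$---which is exactly what Lebesgue continuity provides---and then recover the pointwise rate via the monotonicity of $s\mapsto|\Acal u|(B_s(x))$. This averaged route is arguably the more robust one, since Lebesgue continuity controls solid-ball averages rather than individual spherical slices; your argument sidesteps any need to justify~\eqref{eq:dos} as a limsup over all $r$.
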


A standard measure-theoretic result (cf.~\cite[Theorem 2.56]{APF}) yields the following structural result:

\begin{corollary}\label{cor:fine}
	If $u \in L^1_\loc(\Omega;V)$ is everywhere approximately Lebesgue continuous and $\Acal u \in \Mcal(\Omega;W)$, then $|\Acal u|$ vanishes on Borel sets that are $\sigma$-finite with respect to $\Hcal^{n-1}$, i.e.,
	\[
		\text{$U \subset \Omega$ Borel with $\Hcal^{n-1}(U) < \infty$} \quad \Longrightarrow \quad |\Acal u|(U) = 0.
	\]
In particular, 
\[
	|\Acal u|(M) = 0
\]
for all $C^1$-hyper-surfaces $M \subset \Omega$.
\end{corollary}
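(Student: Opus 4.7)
The plan is to combine the Besicovitch--Lebesgue differentiation theorem for vector Radon measures with the generalized Gauss--Green identity of Lemma~\ref{lem:GG} and a monotonicity argument that upgrades an integral-in-$r$ bound to a pointwise density bound.

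First I reduce to a scalar density estimate. At $|\Acal u|$-almost every $x \in \Omega$, vector-measure differentiation produces a unit Radon--Nikodym derivative $\theta(x) := d\Acal u/d|\Acal u|(x) \in W$, forcing $|\Acal u|(B_r(x)) = (1+o(1))|\Acal u(B_r(x))|$ as $r \to 0^+$. Applying Lemma~\ref{lem:GG} together with the cancellation $\int_{\partial B_r(x)} \Abb(\nu)\,dS = 0$ (immediate from the linearity of $\xi \mapsto \Abb(\xi)$ and the central symmetry of the sphere), the constant $u(x)$ can be subtracted inside the trace integral so that, for a.e.\ $r > 0$,
\[
|\Acal u(B_r(x))| = \left| \int_{\partial B_r(x)} \Abb(\nu)[u^*(\omega) - u(x)]\,dS(\omega) \right| \le C\,g(r, x),
\]
with $g(r, x) := \int_{\partial B_r(x)} |u^* - u(x)|\,dS$ and $C$ depending only on the symbol profile on $S^{n-1}$. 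At a Lebesgue continuity point, the coarea formula yields $G(R) := \int_0^R g(r,x)\,dr = \int_{B_R(x)}|u - u(x)|\,dy = o(R^n)$ as $R \to 0^+$, and a short integration by parts (using $G(r)/r^{n-1} = o(r)$) promotes this to the weighted bound $\int_0^R g(r,x)/r^{n-1}\,dr = o(R)$.

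The main obstacle is then to pass from these $L^1$-in-$r$ estimates, valid only for a.e.\ $r$, to a pointwise control on $\limsup_{r \to 0^+} |\Acal u|(B_r(x))/r^{n-1}$. I resolve this by exploiting the monotonicity of $\Phi(r) := |\Acal u|(B_r(x))$: combining the two displays above on the annulus $[r_0, 2r_0]$ yields
\[
\Phi(r_0) \int_{r_0}^{2r_0} \frac{dr}{r^{n-1}} \le \int_{r_0}^{2r_0} \frac{\Phi(r)}{r^{n-1}}\,dr \le (1+o(1))\,C \int_0^{2r_0} \frac{g(r,x)}{r^{n-1}}\,dr = o(r_0),
\]
and since $\int_{r_0}^{2r_0} r^{-(n-1)}\,dr = c_n\, r_0^{2-n}$ for an explicit dimensional constant $c_n > 0$ (with $c_1 = 1$, $c_2 = \ln 2$, and $c_n = (1-2^{2-n})/(n-2)$ for $n \ge 3$), one concludes $\Phi(r_0) = o(r_0^{n-1})$ as $r_0 \to 0^+$, which is exactly the desired limsup identity. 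The ``in particular'' clause is then immediate, since any $x \in \Theta^{n-1}(\Acal u)$ violates this identity by definition, forcing $|\Acal u|(\Theta^{n-1}(\Acal u) \setminus S_u) = 0$.
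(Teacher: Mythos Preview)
Your argument is correct and shares the paper's three ingredients: the Gauss--Green identity (Lemma~\ref{lem:GG}) together with the spherical cancellation of $\Abb$ to bound $|\Acal u(B_r(x))|$ by a surface oscillation $g(r,x)$; the Radon--Nikod\'ym reduction $|\Acal u|(B_r(x)) \sim |\Acal u(B_r(x))|$ at $|\Acal u|$-a.e.\ $x$; and a monotonicity argument on a dyadic window $[r_0,2r_0]$ to pass from almost-every-$r$ information to a genuine $\limsup$. The execution of this last step differs. The paper asserts that the surface average $\fint_{\partial B_r(x)}|u - u(x)|\,dS$ tends to zero at Lebesgue points and then selects a single good radius $r_s \in [s,2s]$, whereas you work with the integrated bound $\int_0^R g(r,x)\,r^{1-n}\,dr = o(R)$ (via coarea and an integration by parts) and average $\Phi(r)/r^{n-1}$ over the whole window. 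Your route is in fact more robust here: pointwise surface-average decay is \emph{not} automatic at a Lebesgue point (consider $u = \pm 1$ on opposite hemispheres of thin annuli accumulating at $x$), so the paper's choice of $r_s$ implicitly requires a Chebyshev-type selection within $[s,2s]$ that your averaging argument sidesteps entirely.

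One gap remains: you stop at $|\Acal u|(\Theta^{n-1}(\Acal u)\setminus S_u) = 0$, which is the conclusion of Theorem~\ref{thm:fine}, not of the Corollary. The Corollary asserts $|\Acal u|(U) = 0$ for every Borel $U$ with $\Hcal^{n-1}(U) < \infty$, and this does not follow by definition from the density estimate. The missing standard input is the density--Hausdorff comparison \cite[Theorem~2.56]{APF}: since $S_u = \emptyset$ by hypothesis, your estimate gives $\limsup_{r\to 0}|\Acal u|(B_r(x))/r^{n-1} = 0$ at $|\Acal u|$-a.e.\ $x$, and after discarding the $|\Acal u|$-null exceptional set one obtains $|\Acal u|(U) \le 2^{n-1}t\,\Hcal^{n-1}(U)$ for every $t > 0$, hence $|\Acal u|(U)=0$. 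The paper supplies precisely this citation.
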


\begin{remark}There exist functions that are everywhere Lebesgue continuous and fail to be continuous on dense sets.
\end{remark}

\begin{proof} Let $F \subset \Omega$ be the set of points of $\Acal u$ where the density $\Acal u/|\Acal u|(x)$ exists and has norm one. Clearly, at all Lebesgue continuity points $x$ it holds 
\begin{equation}\label{eq:dos}
	\limsup_{r \to 0^+}  \left\{ \inf_{v_r \in V} \fint_{\partial B_r(x)} |u - v_r| = 0 \right\}\,.
\end{equation}
Indeed, the infimum on the right hand side can be chosen to be $v_r \equiv u^*(x)$. In general, the converse fails. We call such points the set of quasi Lebesgue points.

We give a contradiction argument: Let $x \in F$ be a quasi Lebesgue continuous point and let $I$ be the full-measure set of all $r \in (0,\infty)$ satisfying the following properties: $u^*$ exists on $\partial B_r$ and and $|\Acal u|(\partial B_r(x)) = 0$. Using that $\Abb$ has zero mean on the sphere, we deduce from Lemma~\ref{lem:GG} the estimate
\begin{align*}
	\frac{|\Acal u(B_r(x))|}{n\omega_n r^{n - 1}} & = \left|\frac ns\fint_{\partial B_r(x)}\Omega_\Abb[u^* - v_r] \, dS\right| \\
	&  \le \|\Abb\|_{C(\Sbf^{n-1})} \frac ns \fint_{\partial B_r(x)} |u - v_r| \, dS
\end{align*}
for all $r \in I$. Letting $r \to 0^+$ in $I$, we deduce from the existence of the density $\Acal u/|\Acal u|(x)$  that
\[
	\limsup_{\substack{r \in I\\r \to 0^+}}\frac{|\Acal u|(B_r(x))}{r^{n-1}} = 0.
\]
Now, let $s \in (0,\infty)$ and choose $r_s \in I$ with $s \le r_s \le 2s$ (here we are using that $I$ has full measure) so that
\[
	\frac{|\Acal u|(B_s(x))}{s^{n-1}}  \le 2^{n-1} \frac{|\Acal u|(B_{r_s}(x))}{r_s^{n-1}}.
\]
Since every infinitesimal sequence $\{s_j \to 0\}$ gives rise to an infinitesimal sequence $\{r_{s_j} \to 0\}$ with $r_{s_j} \in I$, the previous limsup estimate gives the desired 
\[
	\limsup_{s \to 0^+} \frac{|\Acal u|(B_s(x))}{s^{n-1}} = 0.
\]
Since $F$ is a full $|\Acal u|$-measure set of $\Omega$, this finishes the proof. 
\end{proof}

\subsection*{Acknowledgements} The author wishes to extend special thanks to Hidde Sch\"{o}nberger for fruitful discussions concerning the functional equivalence between the local and the nonlocal operators. In particular, for kindly pointing to me that the original statement was false as it was written, and for helping me reach the observation that no  Korn-type inequality may hold between the local and nonlocal operators (see Corollaries~\ref{cor:no_Korn} and~\ref{cor:no_Korn2}).
I would also like to thank the referee for their careful review, corrections, and comments that led to better presentation of this work. This work was supported by the F.R.S FNRS grant no. 40005112 and by the European Union (ERC, ConFine, 101078057). 

\subsection*{Disclaimer}  Views and opinions expressed are however those of the author(s) only and do not necessarily reflect those of the European Union or the European Research Council Executive Agency. Neither the European Union nor the granting authority can be held responsible for them.

\appendix

\section{A $\rho$-multiplier result}\label{sect:multi}Let $\rho : \R^n \to [0,\infty]$ be a radially symmetric probability function density. Associated to $\rho$, we define the profile function
\[
    g_\rho(r) \coloneqq n\frac{\hat \rho(r)}{r}\,, \qquad r \in [0,\infty)\,.
\]
Notice that 
\[
	\mu_\rho(x) \coloneqq \int_0^\infty g_\rho(r) 1_{B_r}(x) \, dr
\]
defines a radial, nonnegative, measurable probability function on $\R^n$. Indeed, by Fubini's theorem we get
\[
\int \mu_\rho \, dx = \int_0^\infty n\omega_n r^{n-1} \hat \rho(r) \, dr = \int \rho \, dx = 1\,.
\]
We also define the function of $n+2$ variables   
\[
	\Psi_\rho(Z) \coloneqq \frac{1}{2\pi} \cdot\frac{g_\rho(|Z|)}{|Z|}\,, \qquad Z \in \R^{n+2}\,,
\] 
which also defines a probability function $\R^{n+2}$ since, by the properties of the Gamma function it holds that (see~\cite[Section A.3]{grafakos2008classical})
\[
\forall k = 1,2,\dots, \qquad	\frac{(k+2)\omega_{k+2}}{k\omega_k} = \frac{2\pi}{k}\,,
\]
and hence
\begin{align*}
    \int_{\R^{n+2}} \Psi_\rho \, dz & 
    = \int_0^\infty (n+2)\omega_{n+2} r^{n+1} \hat {\Psi_\rho}(r) \, dr  \\
    & = \int_0^\infty n \omega_n r^{n-1}\hat \rho(r) \, dr = \int \mu_\rho = 1\,.
\end{align*}
    For $f \in \Scal(\R^n)$, we consider the following superposition of $r$-scale convolutions 
    \begin{align*}
        Tf & \coloneqq \int_0^\infty g_\rho(r) (1_{B_r} \star f) \, dr \,.
    \end{align*}

\begin{lemma}\label{lem:Psi} The Fourier transform of $\mu_\rho$ is given by
\begin{align*}
	\widehat{\mu_\rho}(\xi)  & \coloneqq  \frac{1}{|\xi|^{n/2}}  \int_0^\infty   nr^{\frac n2 -1} \hat \rho(r) J_{\frac n2}(2\pi r|\xi|) \, dr  
	= \widehat{\Psi_\rho}(\xi,0,0).
\end{align*}
In particular, $\widehat{\mu_\rho}$ is strictly positive if and only if $\widehat{\Psi_\rho}$ is strictly positive.
 
Furthermore, $\widehat{\mu_\rho}$ defines an $L^p$-multiplier for all $1 \le p \le \infty$ as
\[
Tf = f \star \mu_\rho\,\qquad \text{for all $f \in \mathcal S(\R^n)$\,,}
\] 
and it satisfies
	\[
	\widehat{\mu_\rho} \in C(\R^n) \quad \text{with} \quad |\widehat {\mu_\rho}(\xi)| \to 0 \; \text{as $|\xi| \to \infty$\,.}
	\]
\end{lemma}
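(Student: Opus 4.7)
The plan is to compute $\widehat{\mu_\rho}$ directly from its definition as a superposition of ball indicators and then recognize the resulting radial formula as the Fourier transform of $\Psi_\rho$ evaluated at a boundary point.

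First I would apply Fubini's theorem to the double integral
\[
\widehat{\mu_\rho}(\xi) = \int_{\R^n}\int_0^\infty g_\rho(r) \mathbf 1_{B_r}(x)\, dr\, e^{-2\pi \ii x\cdot \xi}\, dx,
\]
whose integrand is absolutely summable on $(0,\infty)\times \R^n$ because $\int_0^\infty g_\rho(r)|B_r|\,dr = \|\rho\|_{L^1}=1$. Inserting the scaling $\widehat{\mathbf 1_{B_r}}(\xi) = r^{n/2}|\xi|^{-n/2}J_{n/2}(2\pi r|\xi|)$ (which follows immediately from $\widehat{\mathbf 1_{B_1}}(\xi)=|\xi|^{-n/2}J_{n/2}(2\pi|\xi|)$ by rescaling) and simplifying with $g_\rho(r)=n\hat\rho(r)/r$ yields the stated integral formula for $\widehat{\mu_\rho}$.

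Next, I would identify this formula with $\widehat{\Psi_\rho}(\xi,0,0)$ using the standard Hankel representation of the Fourier transform of a radial function on $\R^d$ (cf.~\cite[Appendix B]{grafakos2008classical}): for radial $f(x)=f_0(|x|)$ on $\R^d$,
\[
\widehat f(\xi) = \frac{2\pi}{|\xi|^{d/2-1}}\int_0^\infty f_0(r)\, J_{d/2-1}(2\pi r|\xi|)\, r^{d/2}\, dr.
\]
Applying this with $d=n+2$ and radial profile $\hat\Psi_\rho(r)=n\hat\rho(r)/(2\pi r^2)$ produces exactly the same integrand as above after cancellation. Since $\Psi_\rho$ is radial on $\R^{n+2}$, its Fourier transform $\widehat{\Psi_\rho}$ depends only on $|Z|$; similarly $\widehat{\mu_\rho}$ depends only on $|\xi|$. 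The identity $\widehat{\Psi_\rho}(\xi,0,0)=\widehat{\mu_\rho}(\xi)$ therefore extends radially, so $\widehat{\Psi_\rho}>0$ everywhere on $\R^{n+2}$ if and only if $\widehat{\mu_\rho}>0$ everywhere on $\R^n$.

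The remaining assertions are then soft. The identity $Tf = f\star \mu_\rho$ for $f\in\Scal(\R^n)$ is another application of Fubini to $\int_0^\infty g_\rho(r)(\mathbf 1_{B_r}\star f)\,dr$, justified by the joint summability bound $\|f\|_{L^\infty}\int_0^\infty g_\rho(r)|B_r|\,dr=\|f\|_{L^\infty}<\infty$ on compact sets together with $f\in L^1$; Young's inequality with $\mu_\rho\in L^1(\R^n)$, $\|\mu_\rho\|_{L^1}=1$ then gives the $L^p$-multiplier bound for every $1\le p\le\infty$. Finally, continuity and vanishing at infinity of $\widehat{\mu_\rho}$ are the Riemann--Lebesgue lemma applied to $\mu_\rho\in L^1$. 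The only delicate step is the absolute-convergence bookkeeping needed to swap the Fourier transform with the $r$-integral—this is the main (but mild) obstacle, and it is handled in one line by the probability-density normalization $\int_0^\infty g_\rho(r)|B_r|\,dr=1$.
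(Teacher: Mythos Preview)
Your proposal is correct and follows essentially the same route as the paper: compute $\widehat{\mu_\rho}$ by passing the Fourier transform through the $r$-integral (justified by $\|\mu_\rho\|_{L^1}=1$), insert the Bessel-function formula for $\widehat{\mathbf 1_{B_r}}$, and then recognize the resulting expression as the Hankel representation of the radial Fourier transform of $\Psi_\rho$ in $\R^{n+2}$; the $L^p$-multiplier bound via Young's inequality and the Riemann--Lebesgue conclusion are also identical. Your treatment is, if anything, slightly more explicit than the paper's about the Fubini justification and about why the radial symmetry of both transforms upgrades the pointwise identity $\widehat{\Psi_\rho}(\xi,0,0)=\widehat{\mu_\rho}(\xi)$ to the equivalence of strict positivity on the full spaces.
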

\begin{proof} 
	Let us first derive the asserted expression for $\widehat{\mu_\rho}$.
	Since the distributional Fourier transform of $\mu_\rho$ commutes with the integral, it satisfies the identity 
	\begin{align*}
		\widehat{\mu_\rho}(\xi) & = \frac{1}{|\xi|^{\frac n2}} \int_0^\infty  r^{\frac n2} \hat{g_\rho}(r) J_{\frac n2}(2\pi r|\xi|) \, dr  \\
		&=  \frac{2\pi}{|\xi|^{\frac{n}2}} \int_0^\infty  r^{\frac{n}{2}+1}  \hat{\Psi_\rho}(r) J_{\frac{n}2}(2\pi r|\xi|) \, dr  \\
		& = \widehat \Psi_\rho(\xi,0,0)\,.
	\end{align*}
	Here, we have used the properties of Bessel functions along their formula to represent the Fourier transform of radial functions including the unit ball (see~\cite[B.5]{Grafakos}). 
Being the Fourier transform of a probability function, $\widehat{\mu_\rho}$ defines an $L^p$ multiplier by convolution that coincides with $T$ on Schwartz functions. In particular, by Young's inequality it follows that
\[
	\|Tf\|_{L^p} \le \|f\|_{L^p}\qquad \text{for all $1\le p\le \infty$.}
\]
	
	The fact that $\mu_\rho \in L^1(\R^n)$ implies that $\widehat{\mu_\rho}$ is continuous. Finally, the Riemann--Lebesgue lemma 
	guarantees that $|\widehat{\mu_\rho}(\xi)|$ decays to zero as $|\xi|$ tends to infinity (so, in fact, $\widehat{\mu_\rho}$ is uniformly continuous). 
\end{proof}

A direct consequence of this result is that $(\widehat {\mu_\rho})^{-1}$ does not define an $L^2$-multiplier. In particular, we have the following negative result:

\begin{corollary}\label{cor:non_invertible} Let $1 \le p \le \infty$. 
There exists no constant $C$ such that
\[
    \|f\|_{L^p} \le C \|Tu\|_{L^p}
\]
for all $f \in \mathscr D(\R^n)$. 
\end{corollary}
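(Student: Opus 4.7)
The key observation is that $T$ acts as a Fourier multiplier with symbol $\widehat{\mu_\rho}$, for which Lemma~\ref{lem:Psi} provides two decisive properties: $|\widehat{\mu_\rho}(\xi)| \le \|\mu_\rho\|_{L^1} = 1$ everywhere, and $|\widehat{\mu_\rho}(\xi)| \to 0$ as $|\xi| \to \infty$. The plan is to contradict the hypothetical inequality by testing against modulated bumps whose Fourier content is pushed to high frequencies, where the multiplier is arbitrarily small.

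Suppose, for contradiction, that some $C > 0$ satisfies $\|f\|_{L^p} \le C\|Tf\|_{L^p}$ for every $f \in \mathscr D(\R^n)$. Fix a nonzero $\varphi \in \mathscr D(\R^n)$ and, for each $\xi \in \R^n$, define the modulated bump
\[
    f_\xi(x) \coloneqq \varphi(x)\, e^{2\pi i \xi \cdot x}\,,
\]
so that $\|f_\xi\|_{L^p} = \|\varphi\|_{L^p}$ independently of $\xi$. (If real-valued test functions are required, one replaces $f_\xi$ by $\mathrm{Re}\,f_\xi$; averaging of $|\cos(2\pi \xi \cdot x)|^p$ against $|\varphi|^p$ gives $\|\mathrm{Re}\,f_\xi\|_{L^p}$ bounded below by a positive constant for $|\xi|$ large.) The plan is to show that $\|Tf_\xi\|_{L^p} \to 0$ as $|\xi| \to \infty$, which yields the contradiction $0 < \|\varphi\|_{L^p} \le C \cdot 0$. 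A direct change of variables in the convolution yields
\[
    Tf_\xi(x) = e^{2\pi i \xi \cdot x}\, H_\xi(x)\,, \qquad H_\xi(x) \coloneqq \int_{\R^n} \mu_\rho(z)\, e^{-2\pi i \xi \cdot z}\, \varphi(x-z)\,dz\,,
\]
so it suffices to prove $\|H_\xi\|_{L^p} \to 0$.

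For $1 \le p < \infty$ I would combine two standard facts. First, the map $z \mapsto \mu_\rho(z) \varphi(x-z)$ lies in $L^1(\R^n)$ for every fixed $x$, so the Riemann--Lebesgue lemma gives pointwise decay $H_\xi(x) \to 0$ as $|\xi| \to \infty$. Second, the trivial majorization $|H_\xi(x)| \le (\mu_\rho \star |\varphi|)(x)$ provides an $L^p$ dominating function, by Young's convolution inequality applied to $\mu_\rho \in L^1$ and $|\varphi| \in L^p$. Lebesgue's dominated convergence theorem then delivers $\|H_\xi\|_{L^p} \to 0$.

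The main obstacle is the endpoint $p = \infty$, where the pointwise majorant above is useless (it does not give uniform decay). I would circumvent this by passing to the Fourier side: since $\widehat{Tf_\xi}(\eta) = \widehat{\mu_\rho}(\eta)\, \widehat\varphi(\eta - \xi)$, Fourier inversion gives
\[
    \|Tf_\xi\|_{L^\infty} \le \|\widehat{Tf_\xi}\|_{L^1} = \int_{\R^n} |\widehat{\mu_\rho}(\zeta + \xi)|\, |\widehat\varphi(\zeta)|\, d\zeta\,.
\]
The integrand is dominated uniformly in $\xi$ by $|\widehat\varphi(\zeta)| \in L^1$ (since $\varphi \in \mathscr D \subset \mathscr S$), and it tends to zero pointwise in $\zeta$ by the decay of $\widehat{\mu_\rho}$ at infinity. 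Dominated convergence again yields $\|Tf_\xi\|_{L^\infty} \to 0$, closing the contradiction in the remaining case. \qed
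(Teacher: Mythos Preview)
Your proof is correct. The paper takes a shorter path: it observes that $(\widehat{\mu_\rho})^{-1} \notin L^\infty$ (since $\widehat{\mu_\rho}\to 0$ at infinity by Lemma~\ref{lem:Psi}) and then invokes as a black box the standard fact that every $L^p$ Fourier multiplier must lie in $L^\infty$, citing Grafakos. Your argument is essentially the proof of that cited fact, specialized to the present multiplier: testing the hypothetical inequality on modulated bumps $\varphi\,e^{2\pi i\xi\cdot x}$ is precisely how one shows that an $L^p$-bounded multiplier forces its symbol to be bounded. So the two proofs share the same underlying mechanism, but yours is self-contained and treats the endpoint $p=\infty$ explicitly via Fourier inversion, whereas the paper defers to a reference. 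A small bonus of your direct route is that it sidesteps having to make sense of $(\widehat{\mu_\rho})^{-1}$ as a symbol (the paper's formulation implicitly assumes $\widehat{\mu_\rho}$ is nonvanishing, which is not addressed there).
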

\begin{proof}
    By the decay of $|\widehat{\mu_\rho}(\xi)|$ at infinity (see the lemma above), it follows that $(\mu_\rho)^{-1} \notin L^\infty(\R^n)$. The desired assertion then follows from the well-known fact (see, e.g.,~\cite[Section~2.5.4]{Grafakos}) that the space of $L^p$ multipliers is a subset of $L^\infty$.  
\end{proof}

In spite of this negative result, we have a few useful observations. First, we get the following characterizations of injectivity:

\begin{corollary}[Full-space]\label{thm:injective} 
  Let $1 \le p \le 2$. The following are equivalent:
    \begin{enumerate}[(a)]
        \item $T$ is one-to-one in $L^p(\R^n)$.
        \item $\widehat{\mu_\rho}(\xi) \neq 0$ for almost every $\xi \in \R^n$. 
    \end{enumerate}
\end{corollary}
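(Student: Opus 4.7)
The proof proceeds by Fourier analysis, pivoting on the multiplier identity $\widehat{Tf} = \widehat{\mu_\rho}\,\widehat f$ established in Lemma~\ref{lem:Psi}. I handle both implications: the forward direction $(b) \Rightarrow (a)$ is standard, while $(a) \Rightarrow (b)$ proceeds by contrapositive via an explicit construction.

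For $(b) \Rightarrow (a)$, assume $\widehat{\mu_\rho}(\xi) \neq 0$ for almost every $\xi \in \R^n$ and let $f \in L^p(\R^n)$ satisfy $Tf = 0$. Since $1 \le p \le 2$, the Hausdorff--Young inequality places $\widehat f$ in $L^{p'}(\R^n)$ as a genuine function. Because $\widehat{\mu_\rho}$ is bounded, the product $\widehat{\mu_\rho}\widehat f$ lies in $L^{p'}$ and equals $\widehat{Tf} = 0$ a.e. The nonvanishing hypothesis then forces $\widehat f = 0$ a.e., and injectivity of the Fourier transform on $L^p$ (valid for $1 \le p \le 2$) gives $f = 0$.

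For $(a) \Rightarrow (b)$, I argue contrapositively. Assume $Z := \{\xi \in \R^n : \widehat{\mu_\rho}(\xi) = 0\}$ has positive Lebesgue measure; note that $Z$ is closed by continuity of $\widehat{\mu_\rho}$. The case $p = 2$ already carries the main geometric content: choose a bounded Borel set $E \subset Z$ with $0 < |E| < \infty$ and let $f$ be the $L^2$-inverse Fourier transform of $1_E$. By Plancherel, $\|f\|_{L^2} = |E|^{1/2} > 0$, while on the Fourier side $\widehat{Tf} = \widehat{\mu_\rho} \cdot 1_E \equiv 0$, so $Tf = 0$.

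For $p \in [1,2)$ the construction must additionally deliver $f \in L^p$. The natural strategy is to exploit the radial symmetry of $\widehat{\mu_\rho}$: its zero set is a union of concentric spheres, and the coarea formula provides a bounded set $J \subset [0,\infty)$ of positive measure on which the radial profile of $\widehat{\mu_\rho}$ vanishes. Choosing $E = \{\xi : |\xi| \in J\}$, classical Bessel-function asymptotics for Fourier transforms of radial indicators deliver pointwise decay of order $|x|^{-(n+1)/2}$ for the candidate $f$, placing it in $L^p$ for all $p > 2n/(n+1)$. The main obstacle is the complementary small-$p$ regime, and in particular $p = 1$: there, $f$ must be built from a smooth radial density supported in $Z$, which is routine when the radial profile vanishes on a nondegenerate interval (so that $Z$ contains an open annulus) but genuinely delicate when the radial zero set has positive measure yet empty interior.
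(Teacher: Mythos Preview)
Your treatment of $(b)\Rightarrow(a)$ is correct and coincides with the paper's argument: for $1\le p\le 2$ the Hausdorff--Young inequality makes $\widehat f$ a genuine function, the multiplier identity $\widehat{Tf}=\widehat{\mu_\rho}\,\widehat f$ holds as an equality of locally integrable functions, and Fourier injectivity on tempered distributions finishes. Your $p=2$ construction for the converse via $f=(1_E)^{\vee}$ is also correct and in fact more explicit than what the paper writes.

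The gap you flag for $p<2$ is real, and you have located it precisely. To produce a nonzero $f\in L^p$ with $Tf=0$ one needs $\widehat f$ supported in the zero set $Z$ of $\widehat{\mu_\rho}$; for $p=1$ this forces $\widehat f$ to be continuous, which is impossible if $Z$ has empty interior. Continuity and radial symmetry of $\widehat{\mu_\rho}$ do not preclude this: the radial profile is merely a continuous function on $[0,\infty)$ and can in principle vanish on a fat Cantor set. Your intermediate decay argument via Bessel asymptotics is also only heuristic once $J$ is not an interval. The paper's own proof does not engage with this direction at all beyond a one-line appeal to ``Fourier inversion for tempered distributions,'' so you have been more scrupulous than the source; but as you yourself concede, the argument as written does not close the implication $(a)\Rightarrow(b)$ across the full range $1\le p<2$.
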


    \begin{corollary}[Torus]\label{thm:injective} 
 Let $1 \le p \le \infty$. The following are equivalent:
    \begin{enumerate}[(a)]
        \item $T$ is one-to-one in $L^p(\Tbb^n)$.
        \item $\widehat{\mu_\rho}(m) \neq 0$ for all $m \in \Zbb^n - \{0\}$.
    \end{enumerate}
\end{corollary}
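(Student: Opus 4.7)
\medskip

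\noindent\textbf{Proof proposal.} The plan is to reduce the statement to a routine Fourier series argument once $T$ is properly interpreted on the torus. First, I would observe that, since $\mu_\rho \in L^1(\R^n)$ is a probability density and every $f \in L^p(\Tbb^n)$ can be viewed as a $\Zbb^n$-periodic (tempered) distribution on $\R^n$, the convolution $Tf = f \star \mu_\rho$ is well defined as an element of $L^p(\Tbb^n)$, and its action is continuous on $L^p$ by Young's inequality (using the periodized kernel $\sum_{k \in \Zbb^n} \mu_\rho(\sbullet - k) \in L^1(\Tbb^n)$). The essential point is the identity on Fourier coefficients
\[
    \widehat{Tf}(m) = \widehat{\mu_\rho}(m)\, \widehat{f}(m), \qquad m \in \Zbb^n,
\]
which follows from the convolution theorem on the torus (a single application of Fubini, valid for trigonometric polynomials and extended by density to $L^p$).

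The forward implication $(a)\Rightarrow(b)$ I would prove by contraposition. Suppose $\widehat{\mu_\rho}(m_0) = 0$ for some $m_0 \in \Zbb^n \setminus \{0\}$. Then the trigonometric monomial $f_0(x) \coloneqq e^{2\pi i m_0 \cdot x}$ belongs to $L^p(\Tbb^n)$ for every $p$, satisfies $f_0 \neq 0$, and by the multiplier identity $\widehat{Tf_0}(m) = \widehat{\mu_\rho}(m)\,\delta_{m,m_0} = 0$ for all $m \in \Zbb^n$. Hence $Tf_0 = 0$, contradicting injectivity. (If one insists on real-valued examples, take the real or imaginary part of $f_0$.)

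For the converse $(b)\Rightarrow(a)$, suppose $f \in L^p(\Tbb^n)$ with $Tf = 0$. The Fourier coefficients of $f$ are well defined (for $1 \le p \le \infty$ via integration against $e^{-2\pi i m \cdot x}$, since $\Tbb^n$ has finite measure and trigonometric polynomials are in $L^q$ for every $q$), and the identity above gives
\[
    \widehat{\mu_\rho}(m)\, \widehat{f}(m) = 0 \quad \text{for every } m \in \Zbb^n.
\]
By hypothesis $\widehat{\mu_\rho}(m) \neq 0$ for $m \neq 0$, and since $\mu_\rho$ is a probability density, $\widehat{\mu_\rho}(0) = 1 \neq 0$ automatically. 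Therefore $\widehat f(m) = 0$ for every $m \in \Zbb^n$, which forces $f = 0$ by the uniqueness of Fourier series in $L^p(\Tbb^n)$.

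The only real subtlety, and where I would spend the most care, is the justification that $Tf = f \star \mu_\rho$ makes unambiguous sense when $f$ is periodic and $\mu_\rho$ is not compactly supported, together with the torus convolution theorem for the full range $1 \le p \le \infty$. Both issues are handled by a Poisson summation / periodization argument: writing $\tilde \mu_\rho(x) \coloneqq \sum_{k \in \Zbb^n} \mu_\rho(x + k)$, which belongs to $L^1(\Tbb^n)$ with Fourier coefficients $\{\widehat{\mu_\rho}(m)\}_{m \in \Zbb^n}$, one shows $Tf = f \star_{\Tbb^n} \tilde \mu_\rho$ and the multiplier identity follows directly. No other step should present any obstacle.
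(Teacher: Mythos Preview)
Your proposal is correct and follows essentially the same route as the paper: both arguments reduce injectivity of $T$ on $L^p(\Tbb^n)$ to the nonvanishing of the Fourier multiplier $\widehat{\mu_\rho}$ on the integer lattice, and both note that $\widehat{\mu_\rho}(0)=1$ comes for free from $\mu_\rho$ being a probability density. The only cosmetic difference is that the paper first reduces to $p=1$ via compactness of the torus and regularization, whereas you handle all $p$ at once by observing that Fourier coefficients are defined for every $f\in L^p(\Tbb^n)\subset L^1(\Tbb^n)$ and invoking uniqueness of Fourier series there; your periodization discussion for making sense of $Tf=f\star\mu_\rho$ on periodic $f$ is a welcome clarification that the paper leaves implicit.
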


\begin{proof}[Proof (of both statements).]  
First, we address the full-space case. Let $ u \in L^p(\R^n)$. Since both $\widehat u$ and $\widehat{\mu_\rho}$ are locally integrable functions, it follows from Fourier inversion (for tempered distributions) that $T$ is injective if and only if $\widehat{\mu_\rho}(\xi) \neq 0$ for almost every $\xi \in \R^n$. Since $\widehat{\mu_\rho}$ is continuous, the latter condition is equivalent with requiring $\widehat{\mu_\rho} \neq 0$ almost everywhere. 

The proof of the second corollary is similar, except for a few details that are worthwhile to mention. Firstly, the compactness of the torus and a standard regularization argument allows one to reduce the proof to the case $p=1$. Secondly, the condition is restricted to the lattice of points $m \in \Zbb^n$ because one uses Fourier coefficients instead of the Fourier transform. And lastly, the non-vanishing condition at the frequency  $m=0$ is satisfied. Indeed, recalling that $\widehat{\mu_\rho}$ is a probability density, we get $\widehat{\mu_\rho}(0) = 1$ (regardless of the choice of $\rho$). 

This finishes the proof. 
\end{proof}

\section{A measure-theoretic lemma}

\begin{lemma}\label{lem:Leb}
Let $\mu \in \Mcal^+(\R^n)$ be a finite measure and let $s > 0$ be given. Then, the set
\[
	 R_\mu \coloneqq \{x \in \R^n : \mu(\partial B_s(x)) > 0\}
\]
is negligible with respect to the $n$-dimensional Lebesgue measure.
\end{lemma}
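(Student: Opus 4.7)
The plan is to apply Fubini's theorem to the closed (hence Borel) set
\[
	E \coloneqq \setb{(x,y) \in \R^n \times \R^n}{|x-y| = s}\,.
\]
First I would verify measurability: the function $x \mapsto \mu(\partial B_s(x))$ equals the $x$-marginal of $\mathbbm{1}_E$ integrated against $\mu$, namely
\[
	\Phi(x) \coloneqq \int_{\R^n} \mathbbm{1}_E(x,y) \, d\mu(y) = \mu(\partial B_s(x)),
\]
which is Borel measurable by Tonelli applied to the product measure $\mathscr L^n \otimes \mu$ (finite $\sigma$-finite measures, $\mathbbm{1}_E$ Borel).

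Next I would compute the double integral by reversing the order of integration. For each fixed $y \in \R^n$, the $y$-slice
\[
	E_y = \setb{x \in \R^n}{|x-y| = s} = \partial B_s(y)
\]
is the $(n-1)$-sphere of radius $s$ centered at $y$, and therefore $\mathscr L^n(E_y) = 0$. Hence
\[
	\int_{\R^n} \mu(\partial B_s(x)) \, dx \ = \ \int_{\R^n}\int_{\R^n} \mathbbm{1}_E(x,y) \, dx\, d\mu(y) \ = \ \int_{\R^n} \mathscr L^n(\partial B_s(y)) \, d\mu(y) \ = \ 0.
\]
Since $\Phi$ is non-negative and has vanishing Lebesgue integral, $\Phi(x) = 0$ for $\mathscr L^n$-a.e.~$x \in \R^n$, that is, $\mathscr L^n(R_\mu) = 0$.

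The only mildly delicate point is the justification of Fubini/Tonelli, which requires $\sigma$-finiteness of both measures. The ambient Lebesgue measure $\mathscr L^n$ is $\sigma$-finite and $\mu$ is finite by assumption, so this presents no obstacle; alternatively, one may localize $\Phi$ to an arbitrary ball and conclude by the inner regularity of $\mathscr L^n$. This is the only subtlety, and it is entirely routine.
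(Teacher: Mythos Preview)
Your argument is correct and, in fact, more streamlined than the paper's. You exploit the symmetry of the incidence set $E=\{(x,y):|x-y|=s\}$ directly via Tonelli: swapping the order of integration turns the question into the trivial fact that every sphere $\partial B_s(y)$ has zero Lebesgue measure. The paper proceeds differently: it considers the function $g(x)=\mu(B_s(x))$, shows $g\in L^1$ via Young's inequality, and then argues that any point $x$ with $\mu(\partial B_s(x))>0$ must be a Lebesgue discontinuity point of $g$ (because moving the centre into a suitable cone increases $g$ by a fixed amount); the conclusion then comes from the Lebesgue differentiation theorem, which says the set $S_g$ of such points is null. Your route avoids the Lebesgue differentiation theorem and the somewhat delicate cone-jump argument entirely; the paper's route, on the other hand, yields as a by-product that $g$ is approximately continuous almost everywhere, which is not needed here but fits the spirit of the section on fine properties.
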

\begin{proof} Since $\mu$ is a Borel measure, it follows that the map
\[
	g(x)\coloneqq \mu(B_s(x)) = \int \chi_{B_s(x)} \,  d\mu(y) \,,\qquad x \in \R^n
\]
is Borel measurable. Moreover, by Young's convolution inequality, it follows that 
\[
	\|g\|_{L^1} \le \|\eta_s \star \mu\|_{L^1} \le \|\eta_s\|_{L^1} |\mu|(\R^n) < \infty, \qquad \eta_s \coloneqq \eta \star \frac{\chi_{B_s}}{|B_s|}\,,
\]
for any nonnegative test function $\eta \in C^\infty_c(B_{2s})$ such that $\eta \equiv 1$ on $B_s$. In particular, $g$ is integrable; hence, it is Lebesgue continuous at almost every $x \in \R^n$. Notice however that if $|\mu|(\partial B_s(x)) > 0$, then there exists a real $\delta >0$ and an open cone $\mathscr C \subset \R^n$  such that 
\[	
	\frac{\mathscr L^n(\mathscr C \cap B_1)}{|B_1|} \ge \delta, \qquad g(y) \ge g(x) + \delta  \quad \text{for every $y \in (B_\delta \cap \mathscr C) + x$}.
\]
In particular, by a scaling argument, it follows that
\[
	\limsup_{r \to 0} \fint_{B_r(x)} |g - g(x)| \ge \delta^2 > 0 \qquad \text{for all $r > 0$}.
\]
From this basic analysis, we deduce that $R_\mu \subset S_g$ ---where $S_g$ is the set of Lebesgue discontinuous points of $g$. The Lebesgue differentiation theorem gives 
\[
	\mathscr L^n(R_\mu) \le \mathscr L^n(S_g) = 0,
\]
which is what we wanted to show.
\end{proof}

\section{Area-convergence and $L^1$-compactness}

The next lemma shows that area convergence of $L^1$ sequences to an $L^1$ limit is equivalent to strong $L^1$-convergence.

\begin{lemma}\label{lem:area_discriminates}
Let $(u_j)\subset L^1(\R^n;V)$ and $u \in L^1(\R^n;V)$. The following are equivalent:
\begin{enumerate}[(i)]
\item $u_j\,\Lcal^n \longrightarrow u \, \Lcal^n$ in the area sense of measures
\item $u_j \longrightarrow u$ in $L^1$
\end{enumerate}
\end{lemma}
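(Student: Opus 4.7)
The implication (ii)$\Rightarrow$(i) is routine: strong $L^1$-convergence plainly implies weak-$\star$ convergence of the measures $u_j\,\Lcal^n$ toward $u\,\Lcal^n$ in $\Mcal_b(\R^n;V)$, while the $1$-Lipschitz estimate $|\sqrt{1 + |a|^2} - \sqrt{1 + |b|^2}| \le |a - b|$ forces
\[
    \left|\int (\sqrt{1+|u_j|^2}-1)\,dx - \int (\sqrt{1+|u|^2}-1)\,dx\right| \le \|u_j - u\|_{L^1} \longrightarrow 0.
\]
Together these two facts constitute area convergence toward an absolutely continuous limit.

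For the interesting direction (i)$\Rightarrow$(ii), my plan is to invoke Reshetnyak's continuity theorem in its $x$-dependent form (the version used in the proof of Corollary~\ref{cor:2} above, citing \cite{adolfo_FA}). Since the limit $u\,\Lcal^n$ has no singular part, the theorem upgrades area convergence to the statement that
\[
    \int f(x, u_j(x))\,dx \longrightarrow \int f(x, u(x))\,dx
\]
for every jointly continuous integrand $f : \R^n \times V \to \R$ with $f(x,0) = 0$, at most linear growth in $v$ uniformly on compact sets in $x$, and a continuous recession $f^\infty(x,\cdot)$. Given $\eps > 0$, I would pick $u_\eps \in C_c(\R^n;V)$ with $\|u - u_\eps\|_{L^1} < \eps$ and apply the previous convergence to the integrand
\[
    f(x, v) \coloneqq |v - u_\eps(x)| - |u_\eps(x)|,
\]
which is jointly continuous, vanishes at $v = 0$, is bounded by $|v|$, and has the continuous recession $f^\infty(x, v) = |v|$. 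The resulting identity $\int f(x,u_j)\,dx \to \int f(x,u)\,dx$ reduces to $\|u_j - u_\eps\|_{L^1} \to \|u - u_\eps\|_{L^1}$, and the triangle inequality then yields
\[
    \limsup_{j} \|u_j - u\|_{L^1} \le \limsup_{j}\|u_j - u_\eps\|_{L^1} + \|u_\eps - u\|_{L^1} = 2\|u_\eps - u\|_{L^1} < 2\eps.
\]
Letting $\eps \to 0^+$ would conclude the proof.

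The main technical point to justify is the appeal to the $x$-dependent Reshetnyak continuity theorem, since the paper's own definition of area convergence tests only against integrands $f : V \to \R$. The upgrade is classical and can be carried out by uniformly approximating the jointly continuous integrand by finite sums of separable products $\phi(x)\psi(v)$ and controlling each piece via the $x$-independent definition; alternatively, one could approximate $u_\eps$ directly by a piecewise constant function and apply the $x$-independent form on each element of the partition. The normalization $f(x,0) = 0$ is essential because $\R^n$ has infinite Lebesgue measure, and it is precisely this normalization that ensures finiteness of all integrals under consideration.
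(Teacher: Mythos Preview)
Your argument for (ii)$\Rightarrow$(i) matches the paper's verbatim: weak-$\star$ convergence plus the $1$-Lipschitz bound on the area integrand.

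For (i)$\Rightarrow$(ii) the two proofs diverge. The paper does not invoke Reshetnyak at all; instead it reads off from the generalized Young measure framework (via \cite[Remark~2.6]{advances} and \cite[Remark~2.10]{AB}) that area convergence toward an absolutely continuous limit forces (a) tightness, (b) equi-integrability (through Dunford--Pettis), and (c) convergence in measure, and then concludes by the Vitali convergence theorem. Your route is more self-contained and arguably more elegant: testing area convergence against the single $x$-dependent integrand $f(x,v)=|v-u_\eps(x)|-|u_\eps(x)|$ with $u_\eps\in C_c$ collapses the whole question to $\|u_j-u_\eps\|_{L^1}\to\|u-u_\eps\|_{L^1}$, which is exactly what one wants after a triangle inequality. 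The trade-off is that you must justify the $x$-dependent Reshetnyak continuity on the unbounded domain $\R^n$, whereas the paper outsources the corresponding difficulty (tightness, no concentration) to the cited Young-measure references. Your sketch for this upgrade is reasonable---localize to balls $B_R\supset\supp u_\eps$ where the classical $x$-dependent theorem applies, and handle the tail via $\int_{B_R^c}|u_j|=\|u_j\|_{L^1}-\int_{B_R}|u_j|$, both pieces of which converge since area convergence already delivers $\|u_j\|_{L^1}\to\|u\|_{L^1}$ (take the $x$-independent integrand $|v|$). So the approach is sound, just be explicit that the tail control ultimately rests on strict convergence of the $L^1$ norms, which is part of the area-convergence hypothesis.
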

\begin{proof}
We prove $(i) \to (ii)$. The fact that $u_j \, \Lcal^d$ converges in area to an absolutely continuous limit $u\,\Lcal^d$ has two important consequences. Firstly, it implies that $(u_j)_j$ is a tight family of measures. Secondly,  through~\cite[Remark~2.6]{advances} and~\cite[Remark~2.10(b)]{AB}), it implies that $(u_j)_j$ is pre-compact with respect to sequential weak $L^1$-convergence. In particular, by the Dunford--Pettis criterion, the sequence $(u_j)_j$ is equi-integrable on $\R^d$. On the other hand, now appealing to~\cite[Remark~2.6]{advances} and~\cite[Remark~2.10(a)]{AB}), we deduce that $u_j \to  u$ in measure on $\R^n$. This shows that the sequence $(u_j)_j$ satisfies the three assumptions of the Vitali convergence theorem on $\R^n$, and hence, $u_j \to u$ in $L^1(\R^d)$ as desired. 

We now prove that $(ii) \to (i)$. By definition, $u_j \, \mathcal L^n \toweakstar u \mathcal L^n$ as measures. Moreover, the Lipschitz constant $\textrm{Lip}(g)$ of the area integrand $g(v) = \sqrt{1 + v^2}$ is one and therefore $|g(v_1) - g(v_2)|\le |v_1 - v_2|$ for all $v_1,v_2 \in V$. In particular, 
\[
	\limsup_{j \to \infty} \left| \int g(u_j) - g(u) \, dx \right| \limsup_{j \to\infty} \le \int |u_j - u| \, dx\, = 0\,.
\]
This finishes the proof.
\end{proof}

\end{document}